\DeclareMathOperator{\Prb}{\mathbf{P}}
\DeclareMathOperator{\Mean}{\mathbf{E}}
\DeclareMathOperator{\Law}{Law}
\newcommand{\R}{\mathbb{R}}
\theoremstyle{plain}
\newtheorem{theorem}{Theorem}[section]
\newtheorem{proposition}[theorem]{Proposition}
\newtheorem{lemma}[theorem]{Lemma}
\newtheorem{corollary}[theorem]{Corollary}
\theoremstyle{definition}
\newtheorem{definition}{Definition}[section]
\theoremstyle{remark}
\newtheorem{remark}{Remark}[section]
\newtheorem{assumpt}{Assumption}
\title{McKean-Vlasov limit for interacting systems with simultaneous jumps}
\author{Luisa Andreis, Paolo Dai Pra, Markus Fischer}
\address{Dipartimento di Matematica \\ 
Universit\`a degli Studi di Padova\\ 
Via Trieste 63\\ 
35121 Padova, Italy}
\email{andreis@math.unipd.it, daipra@math.unipd.it, fischer@math.unipd.it}
\keywords{Mean-field interaction, propagation of chaos, Wasserstein distance}
\subjclass[2010]{60J75, 60K35}
\date{\today}
\begin{document}

\begin{abstract}
Motivated by several applications, including neuronal models, we consider the McKean-Vlasov limit for mean-field systems of interacting diffusions with simultaneous jumps. We prove propagation of chaos via a coupling technique that involves an intermediate process and that gives a rate of convergence for the $W_1$ Wasserstein distance between the empirical measures of the two systems on the space of trajectories $\mathbf{D}([0,T],\mathbb{R}^d)$. 
\end{abstract}

\maketitle

\section{Introduction}
Treatable modeling for complex systems often involves the {\em mean-field} assumption: the system is comprised by several interacting components, whose distribution is permutation invariant. This assumption allows in several cases the derivation of {\em  macroscopic} equations for the dynamics ({\em McKean-Vlasov} equations), in the limit as the number of components tends to infinity. Macroscopic behavior is also related to the phenomenon of {\em propagation of chaos}, which states that different components become stochastically independent as their number increases to infinity.

Since the introduction of this topic in the study of fluid dynamics (\cite{Kac54, McK66}), dynamic mean-field models have been considered both in general (\cite{Da83,Gra92,Gra92non, Szn91})  and for special models, motivated by life sciences (\cite{BaFaFaTo12, BeGiPa10, RoTo14, bo14}) and social sciences (\cite{drst, gar13, gie15}). General results on propagation of chaos include diffusions with jumps (\cite{Gra92, Gra92non}), under suitable Lipschitz conditions on coefficients. In recent years, neuronal networks have motivated the introduction of models whose components are allowed to jump simultaneously, and are therefore not covered by the results in \cite{Gra92}. Propagation of chaos is not obvious in these models, since simultaneous jumps could in principle interfere with asymptotic independence. In \cite{DeGaLoPr14}, this problem is solved via a rather involved approximation techniques, while existing techniques have been adapted to this context in \cite{FoLo14} and \cite{RoTo14}.

The purpose of this paper is to prove propagation of chaos results for general models with simultaneous jumps, which include the great majority of those cited above, by applying coupling techniques mainly borrowed from \cite{Gra92}. The peculiarity of this approach is the $L^1$ framework, reflected in the use of the Wasserstein-one distance. The $L^1$ framework, as opposed to the more common $L^2$ framework, allows for prescribing, in a natural way, the transition rates of the jumps. Besides simultaneous jumps, the models we treat may have drift or jump coefficients which are, though only locally Lipschitz as functions of the state variable, sufficiently stabilizing to yield strong existence and uniqueness of solutions. Notice that, when dealing with nonlinear Markov processes, the localization procedure usually employed in the proof of existence and uniqueness for \mbox{SDEs} with non-globally Lipschitz coefficients does not work in general \cite{scheu87}.
We combine coupling arguments with the introduction of an ``intermediate process'' that will be convenient in handling the jump terms. Thus, for many systems considered in the literature, no {\em ad hoc} technique is necessary to obtain propagation of chaos results. It also provides a framework that can be applied to generalizations of the models recently proposed.

The paper is organized as follows. In Section~\ref{Particle_system_and_limit}, we introduce at an informal level the framework, describing the main characteristics of both the particle system and the nonlinear process that we are interested in. In the following sections, we prove our results under precise assumptions, for three classes of systems. In Section~\ref{Lipschitz_cond}, we work under Lipschitz conditions, where rather straightforward adaptations of standard techniques are applied. In Section~\ref{non_lip_drift}, we introduce a class of nonlinear diffusions with jumps where the drift term comes from a convex potential. In Section~\ref{non_lip_rate}, we take inspiration from the neuroscience models mentioned above and adapt our techniques to a class of piecewise deterministic processes, where the jump rate is superlinear.

\section{Interacting particle systems and macroscopic limits}\label{Particle_system_and_limit}

In this Section, we introduce both the microscopic and the macroscopic dynamics at an informal level, and illustrate the phenomenon of propagation of chaos. In the remaining part of the paper well-posedness and convergence will be shown under various assumptions.

\subsection{The microscopic dynamics}

Let  $X^N=(X^N_1,\dots,X^N_N)$ $\in$ $\mathbb{R}^{d\times N}$ be the spatial positions of $N$ different particles moving in $\R^d$. We introduce the corresponding {\em empirical measure}
\begin{equation*}
	\mu^N_X \doteq \frac{1}{N}\sum_{i=1}^N\delta_{X^N_i}.
\end{equation*}
When the time variable appears explicitly in $X^N(t)$, we write $\mu^N_X(t)$ to indicate the time dependence of the empirical measure. Note that $\mu^N_X(t)$ is an element of $\mathcal{M}(\R^d)$, the set of probability measures on the Borel subsets of $\R^d$. 

The particle positions $X^N(t)$ evolve as a jump diffusion process with the following specifications for the $i$-th particle:
\begin{itemize}

\item a drift coefficient of the form $F(X^N_i(t),\mu_X^N(t))$ for some function $F: \R^d \times {\mathcal{M}}(\R^d) \rightarrow \R^d$ common to all particles;

\item a diffusion coefficient of the form $\sigma(X^N_i(t),\mu_X^N(t))$ for some function $\sigma: \R^d \times {\mathcal{M}}(\R^d) \rightarrow \R^{d\times d_1}$, again the same for all particles;

\item the jump amplitude and rate: particle $i$ jumps by a random amplitude  \\ $\psi(X^N_i(t),\mu_X^N(t),h^N_i)$ with rate $\lambda(X^N_i(t),\mu_X^N(t))$; this {\em main} jump induces simultaneous {\em collateral} jumps of {\em all} other particles: the $j$-th particle jumps by a random amplitude  $\displaystyle{\frac{\Theta(X^N_i(t),X^N_j(t),\mu_X^N(t),h^N_i,h^N_j)}{N}}$, where randomness of the jumps is given by the random parameter $h^N=(h^N_i)_{i=1,\dots,n}$ that is distributed according to a symmetric probability measure $\nu_N$ on $[0,1]^N$. Here, $\lambda$, $\Psi$, $\Theta$ are functions $\R^d \times {\mathcal{M}}(\R^d) \rightarrow [0,\infty)$, $\R^d \times {\mathcal{M}}(\R^d)\times [0,1] \rightarrow \R^d$, and $\R^d \times \R^d\times {\mathcal{M}}(\R^d)\times [0,1]^2 \rightarrow \R^d$, respectively.  

\end{itemize}

In more analytic terms, we are considering a  Markov process $X^N=\{X^N(t)\}_{t\in[0,T]}$ with values in $\mathbb{R}^{d\times N}$ whose infinitesimal generator takes the following form on a suitable family of test functions $f$:
\begin{multline*}
	\mathcal{L}^Nf(\boldsymbol{x}) = \sum_{i=1}^N \left[F(x_i,\mu_{\boldsymbol{x}}^N)\cdot \partial_if(\boldsymbol{x})+\frac{ 1}{2}\sum_{j,k=1}^da(x_i,\mu_{\boldsymbol{x}}^N)_{jk}\cdot \partial^2_{i}f(\boldsymbol{x})_{jk}\right.\\
	\left.+\lambda(x_i,\mu_{\boldsymbol{x}}^N)\int_{[0,1]^N}\left(f\left(\boldsymbol{x}+\Delta^N_i(x,\mu_{\boldsymbol{x}}^N,h^N)\right)-f(\boldsymbol{x})\right)\nu_N(dh^N)\right]
\end{multline*}
where $\partial_i f(\boldsymbol{x})$ indicates the vector of first order derivatives w.r.t.\ $x_i$, $\partial^2_i f(\boldsymbol{x})$ indicates the Hessian matrix of the second order derivatives w.r.t.\ $x_i$, $a(x_i,\mu_{\boldsymbol{x}}^N) \doteq \sigma(x_i,\mu_{\boldsymbol{x}}^N)\sigma(x_i,\mu_{\boldsymbol{x}}^N)^*$ and
\begin{equation*}
	\Delta^N_i(x,\mu_{\boldsymbol{x}}^N,h^N)_j \doteq \begin{cases}
	\frac{\Theta(x_i,x_j,\mu_{\boldsymbol{x}}^N,h^N_i,h^N_j)}{N}& \text{for } j\neq i,\\
\psi(x_i,\mu^N_{\boldsymbol{x}},h^N_i)& \text{for } j=i.
	\end{cases}
\end{equation*}

Towards a rigorous construction, allowing the limit as $N \to +\infty$, let us consider a filtered probability space $(\Omega,\mathcal{F},(\mathcal{F}_t)_{t\geq0},\Prb)$ satisfying the usual hypotheses, rich enough to carry an independent family $(B_i,\mathcal{N}^i)_{i\in\mathbb{N}}$ of $d$-dimensional Brownian motions $B_i$ and Poisson random measures $\mathcal{N}^i$ with characteristic measure $l\times l\times\nu$. Here $l$ is the Lebesgue measure restricted to $[0,\infty)$ and $\nu$ is a symmetric probability measure on $[0,1]^{\mathbb{N}}$  such that, for every $N \geq 1$, $\nu_N$ coincides with the projection of $\nu$ on the first $N$ coordinates. We will construct  $X^N$ as the solution of the following \mbox{SDE} 
\begin{multline}\label{SDE_XN}
	dX_i^N(t) = F(X_i^N(t),\mu_X^N(t))dt + \sigma(X^N_i(t),\mu_X^N(t))dB^i_t\\
	+ \frac{1}{N}\sum_{j\neq i} \int_{[0,\infty)\times [0,1]^{\mathbb{N}}} \Theta(X^N_j(t^-),X^N_i(t^-),\mu_X^N(t^-),h_j,h_i) \mathds{1}_{(0,\lambda(X^N_j(t^-),\mu^N_X(t^-))]}(u) \mathcal{N}^j(dt,du,dh)\\
	+\int_{[0,\infty)\times [0,1]^{\mathbb{N}}} \psi(X^N_i(t^-),\mu_X^N(t^-),h_i) \mathds{1}_{(0,\lambda(X^N_i(t^-),\mu_X^N(t^-))]}(u) \mathcal{N}^i(dt,du,dh),
\end{multline}
$i=1,\dots,N$. The existence and uniqueness of a solution starting from a vector of initial conditions $\left(X_1^N(0),\dots,X^N_N(0)\right)$ depends obviously on the assumptions on the coefficients, and we will specify sufficient conditions in the following sections.
\begin{remark} Notice that we made the choice of considering separately the jump's rate and amplitude. This is motivated by the fact that we are mainly interested in the jumps and we want to state a clear framework, useful for applications. The non-compensated jump component is often represented by a measure that does not directly describe the behavior of the system. Here, we want to highlight the role of the jumps, therefore we describe a diffusion process that at each position has a certain jump rate and a set of possible jumps, represented by the functions $\lambda$ and $\Delta^N$, respectively. The aim of this work is to give results without uniform boundedness assumptions on the jump rate. In the next sections, we will see that the first natural assumption is to have globally Lipschitz conditions on the functions $\lambda$ and $\Delta^N$. This is the reason why we need to perform all our proofs in an $L^1$ framework, instead of the classical $L^2$ approach for stochastic calculus. Indeed, when dealing with the well-posedness of the nonlinear Markov process, we will need to bound expectations of the supremum over a time interval of an integral w.r.t. the Poisson random measure
$\mathcal{N}$. In an $L^2$ framework, this involves the corresponding compensated martingale  $\tilde {\mathcal{N}}$ and needs bounds of the type, for $X,\,Y$ $\in$ $\mathbb{R}^d$,
\begin{equation}\label{cond}
\int_{0}^{\infty}\int_{[0,1]^{\mathbb{N}}}\|\Delta^N(X,h)\mathds{1}_{(0,\lambda(X)]}(u)-\Delta^N(Y,h)\mathds{1}_{(0,\lambda(Y)]}(u)\|^pdu\nu(dh)\leq C\|X-Y\|^p,
\end{equation}
for $p=2$. However, sometimes \eqref{cond} may hold for $p=1$, but not for $p=2$, which justifies the choice of getting the $L^1$ framework, where we do not need to compensate the process $\mathcal{N}$. For instance, if $\Delta$ is constant and $\lambda$ is globally Lipschitz, \eqref{cond} holds for $p=1$ and not $p=2$.
\end{remark}

\subsection{Macroscopic process}\label{macro}

Suppose the solution $X^N$ of \eqref{SDE_XN} exists, and that its initial condition has a permutation invariant distribution. Fix an arbitrary component $i$, and assume the process $X^N_i$ has a limit in distribution; by symmetry, the law of the limit does not depend on $i$, so we denote by $X$ the limit process. To identify, at a heuristic level, its law, we make the further assumption that a law of large numbers holds, i.e. $\mu^N_X(t)$ converges, as $N \to +\infty$, to the law $\mu_t$ of $X(t)$. Letting $N \to +\infty$ in \eqref{SDE_XN} we deduce, at a purely formal level, that the limit process $X(t)$ has the law of the solution of 
the {\em McKean-Vlasov} \mbox{SDE}:
\begin{multline}\label{SDE_limite}
	dX(t)=\left(F(X(t),\mu_t)+ \left\langle \mu_t,\lambda(\cdot,\mu_t)\int_{[0,1]^{2}}\Theta(\cdot,X(t^-),\mu_t,h_1,h_2)\nu_2(dh_1,dh_2) \right\rangle \right)dt\\
	+\sigma(X(t),\mu_t)dB_t +\int_{[0,\infty)\times[0,1]^{\mathbb{N}}} \psi(X(t^-),\mu_s,h_{1})\mathds{1}_{(0,\lambda(X(t^-),\mu_s)]}(u)\mathcal{N}(dt,du,dh).
\end{multline}
Here, $B$ is a $d_1$-dimensional Brownian motion and $\mathcal{N}$ an independent Poisson random measure with characteristic measure $dtdu\nu(dh)$ on $[0,\infty)^2\times[0,1]^{\mathbb{N}}$ as above. By $\langle \cdot,\cdot \rangle$ we indicate the integral of a function on its domain with respect to a certain measure; thus, $\langle \mu,\phi\rangle = \int_{\mathbb{R}^d} \phi(y)\mu(dy)$.

\begin{remark}
The Poisson random measures appearing in Equations \eqref{SDE_XN} and \eqref{SDE_limite}, respectively, have characteristic measure defined on $[0,\infty)^2\times [0,1]^{\mathbb{N}}$. The two equations could equivalently be stated in terms of Poisson random measures with characteristic measures defined on $[0,\infty)^2\times [0,1]^{N}$ (namely, $l\times l\times \nu_{N}$) and on $[0,\infty)^2\times [0,1] $ (namely, $l\times l\times \nu_1$). The reason for our seemingly unnatural choice is that it prepares for the coupling argument we will use below to establish propagation of chaos. We will need, for each $N$, a coupling of the $N$-particle system with $N$ independent copies of the limit system.
\end{remark}

Existence and uniqueness of solutions to \eqref{SDE_limite} starting from a given initial condition $X(0)$ will be discussed in the following sections. Note that \eqref{SDE_limite} is not a standard \mbox{SDE} since the law $\mu_t$ of the solution appears as an argument of its coefficients. It is often referred to as McKean-Vlasov \mbox{SDE}, as it is customary to call McKean-Vlasov equation the partial differential equation solved by the law $\mu_t$, namely, in the weak form,
\begin{equation*}
	\langle \mu_t,\phi\rangle - \langle \mu_0,\phi\rangle = \int_0^t \langle \mu_s,\mathcal{L}(\mu_s)\phi\rangle ds,
\end{equation*}
where
\begin{multline*}
	\mathcal{L}(\mu_t)\phi(x) \doteq F(x,\mu_t)\partial\phi(x) + \frac{1}{2}\sum_{j,k=1}^da(x,\mu_t)_{jk}\partial^2\phi(x)_{jk}\\
	+\left\langle \mu_t,\lambda(\cdot,\mu_t)\int_{[0,1]^{2}}\Theta(\cdot,x,\mu_t,h_1,h_2)\nu_2(dh_1,dh_2) \right\rangle \partial\phi(x)\\
	+\lambda(x,\mu_t)\int_{[0,1]}\left( \phi(x+\psi(x,\mu_t,h_1))-\phi(x)\right) \nu_1(dh_1).
\end{multline*}

\subsection{Propagation of chaos}\label{prop_chaos}

The link between the microscopic dynamics \eqref{SDE_XN} and the macroscopic limit \eqref{SDE_limite} is explained by the phenomenon of propagation of chaos. Let $\mu$ be a probability measure on $\R^d$. We assume that the sequence of the distributions of $X^N(0)$ is $\mu$-{\em chaotic}: for every $k \in \mathbb{N}$, the vector $(X^N_1(0), X^N_2(0), \ldots,  X^N_k(0))$ converges in distribution to the product measure $\mu^{\otimes k}$.  Fix an arbitrary time horizon $T>0$, and denote by $X^N[0,T] = (X^N(t))_{t\in[0,T]}$ the random path of the microscopic process up to time $T$. We say that {\em propagation of chaos} holds if the distribution of $X^N[0,T]$ is itself $Q$-chaotic for some probability measure $Q$ on the Skorohod space of {\em c{\`a}dl{\`a}g} functions $\mathbb{D}([0,T],\mathbb{R}^{d})$, that is, for each fixed $k\in \mathbb{N}$, the vector of random paths $(X^N_1[0,T], X^N_2[0,T], \ldots,  X^N_k[0,T])$ converges in distribution to $Q^{\otimes k}$. For a comprehensive introduction to the notion of propagation of chaos see \cite{Szn91}.

To illustrate the general strategy of proof, it is useful
to introduce an \emph{intermediate process} $Y^N=(Y^N(t))_{t\in[0,T]}$ with values in $\mathbb{R}^{d\times N}$. This Markov process $Y^N$ can be given as the solution of the \mbox{SDE}
\begin{multline}\label{SDE_YN}
	dY^N_i(t) = F(Y^N_i(t),\mu^N_Y(t))dt + \sigma(Y^N_i(t),\mu^N_i(t))dB^i_t \\
	+\frac{1}{N} \sum_{j=1}^{N} \lambda(Y^N_i(t^-),\mu^N_Y(t^-)) \int_{[0,1]^{2}} \Theta(Y^N_j(t^-),Y^N_i(t^-),\mu^N_Y(t^-),h_1,h_2)\nu_2(dh_1,dh_2) dt\\
	+ \int_{[0,\infty)\times[0,1]^{\mathbb{N}}} \psi(Y^N_i(t^-),\mu^N_Y(t^-),h)\mathds{1}_{(0,\lambda(Y^N_i(t^-),\mu^N_i(t^-))]}(u)\mathcal{N}^i(dt,du,dh),
\end{multline}
$i=1,\dots,N$, where again $B^i$ are independent d-dimensional Brownian motions and $\mathcal{N}^i$ are independent Poisson random measures with characteristic measure $l\times l\times\nu$. It is immediate to see that the process $Y^N$ differs from the original process $X^N$ in the jump terms; indeed, here the \emph{collateral jumps} have been absorbed by a new drift term, while the amplitude of the remaining jumps affects only one component a time. By using the {\em same} Brownian motions and the {\em same} Poisson random measures in \eqref{SDE_XN} and in \eqref{SDE_YN}, the processes $X^N$ and $Y^N$ are {\em coupled}, i.e. are realized on the same probability space: it will not be hard to give conditions for the $L^1$-convergence to zero of $X^N_1[0,T] - Y^N_1[0,T]$. Thus, the fact that the law of $X^N$ is $Q$-chaotic will follow if one shows that the law of $Y^N$ is $Q$-chaotic. Since $Y^N$ has no simultaneous jumps, this can be obtained along the lines of the classical approach. The intermediate process has the nice feature of highlighting the role of simultaneous jumps in the rate of convergence in $W_1$ Wasserstein distance of the empirical measure. Indeed by comparing the empirical measures of $X^N$ and $Y^N$, we obtain that the rate of convergence due to the simultaneous jumps is of the order $\frac{1}{\sqrt{N}}$, while the final rate obviously depends on the moments of initial conditions and of the process itself, see \cite{FoGu15}.


\section{Globally Lipschitz conditions on all coefficients}\label{Lipschitz_cond}

In this Section, we give Lipschitz conditions under which we can prove rigorously the results informally stated in the previous section. To state these conditions and the corresponding theorems, we need a suitable metric on spaces of probability measures.

Let $\mathcal{M}^1(\mathbb{R}^d)$ be the space of probability on $\R^d$ with finite first moment:
\[
	\mathcal{M}^1(\mathbb{R}^d) = \{ \mu \in \mathcal{M}(\mathbb{R}^d): \int \|x\| \mu(dx) < +\infty\}.
\]
This space is equipped with the $W_1$ Wasserstein metric:
\begin{align*}
	\rho(\mu,\nu)&\doteq \inf\left\{\int_{\mathbb{R}^d\times \mathbb{R}^d}\|x-y\|\pi(dx,dy);\; \pi \text{ has marginals } \mu \text{ and } \nu \right\}\\
	&=\sup\left\{\langle g,\mu\rangle - \langle g,\nu\rangle : g\!:\R^{d}\to \R,\; \|g(x)-g(y)\|\leq \|x-y\| \right\}.
\end{align*} 
We also consider the following subset of   $\mathcal{M}\left(\mathbf{D}\left([0,T],\mathbb{R}^d\right)\right)$, the set of the  probability measures on $\mathbf{D}\left([0,T],\mathbb{R}^d\right)$:
\[
	\mathcal{M}^1\left(\mathbf{D}\left([0,T],\mathbb{R}^d\right)\right) \doteq \left \{ \alpha \in \mathcal{M}\left(\mathbf{D}\left([0,T],\mathbb{R}^d\right)\right): \int_{\mathbf{D}}\sup_{t\in[0,T]}\|x(t)\|\alpha(dx) <+\infty \right\},
\]
and provide it with the metric
\begin{equation*}
	\rho_T(\alpha,\beta)\doteq \inf\left\{\int_{\mathbf{D}\times\mathbf{D}}\sup_{t\in[0,T]}\|x(t)-y(t)\|P(dx,dy);\, \text{ where } P \text{ has marginals }\alpha\text{ and }\beta\right\}.
\end{equation*}   

In what follows, we shall adopt a notion of chaoticity  which is stronger than the one we state above. 
\begin{definition} \label{def:ch}
	Let $X^{N} = (X^{N}_1, X^{N}_2, \ldots, X^{N}_N)$ be a sequence of random vectors with components $X^{N}_i \in \R^d$ (resp.\ $X^{N}_i \in \mathbf{D}\left([0,T],\mathbb{R}^d\right)$). For $\mu \in \mathcal{M}^1(\mathbb{R}^d)$ (resp.\ $\mu \in \mathcal{M}^1\left(\mathbf{D}\left([0,T],\mathbb{R}^d\right) \right)$), we say that $X^{N}$ is {\em $\mu$-chaotic in $W_1$} if its distribution is permutation invariant and, for each $k \in \mathbb{N}$, the law of the vector $ (X^{N}_1, X^{N}_2, \ldots, X^{N}_k)$ converges to $\mu^{\otimes k}$ with respect to the metric $\rho$ (resp.\ $\rho_T$).
\end{definition}

\subsection{Existence and uniqueness of solutions for the particle system and the McKean-Vlasov equation}

The conditions on the coefficients of system \eqref{SDE_XN} and the corresponding limit \eqref{SDE_limite} are as follows:
\begin{assumpt}\label{ASS_GLOB}
\begin{itemize}
	
\item[(Li)] The classical global Lipschitz assumption on $F$ and $\sigma$: $\exists\, \tilde L>0$ such that, for all $x,y \in \mathbb{R}^d$, all $\alpha,\gamma \in \mathcal{M}^{1}(\mathbb{R}^d)$,  
\begin{equation*}
	\|F(x,\alpha)-F(y,\gamma)\|\vee  \|\sigma(x,\alpha)-\sigma(y,\gamma)\|\leq \tilde L\left( \|x-y\|+\rho(\alpha,\gamma) \right). 
\end{equation*}

\item[(I)] 
The integrability condition: for all $N\in \mathbb{N}$, for all  $\mathbf{x} \in \mathbb{R}^{d\times N}$
\begin{equation*}
	\sup_{i\in \{1,\dots,N\}}\sup_{\alpha \in \mathcal{M}^1(\mathbb{R}^d)} \lambda(x_i,\alpha) \int_0^T \int_{[0,1]^{N}} \left\| \Delta^N_i(\mathbf{x},\alpha,h^N)\right\| \nu_N(dh^N)dt<\infty.
\end{equation*}

\item[(L1)] The $L^1$-Lipschitz assumption on the jump coefficients: $\exists$ $\bar{L} > 0$ such that, for all $x, y\in \mathbb{R}^{d}$, all $\alpha,\gamma$ $\in$ $\mathcal{M}^{1}(\mathbb{R}^d)$,
{\small
\begin{multline*}  
	\int_{[0,\infty)\times[0,1]} \|\psi(x,\alpha,h)\mathds{1}_{(0,\lambda(x,\alpha)]}(u) - \psi(y,\gamma,h)\mathds{1}_{(0,\lambda(y,\gamma)]}(u)\| du\nu_1(dh) 
	\leq \bar L \left(\|x-y\|+\rho(\alpha,\gamma)\right)
\end{multline*}}
and
{\small
\begin{multline*} 
	\| \langle \alpha,\lambda(\cdot,\alpha)\int_{[0,1]^{2}}\Theta(\cdot,x,\alpha,h_1,h_2)\nu_2(dh_1,dh_2) \rangle - \langle \gamma,\lambda(\cdot,\gamma)\int_{[0,1]^{2}}\Theta(\cdot,y,\gamma,h_1,h_2)\nu_2(dh_1,dh_2) \rangle \|\\
	\leq \bar{L} \left(\|x-y\|+\rho(\alpha,\gamma)\right),
\end{multline*}}

\end{itemize}
\end{assumpt}
In the following, we set $L\doteq \tilde L\vee \bar L$.

Existence and uniqueness of a square integrable strong solution of \eqref{SDE_XN} starting from a vector of square integrable initial conditions $\left(X_1^N(0),\dots,X^N_N(0)\right)$, independent of the family $(B_i,\mathcal{N}^i)_{i\in\mathbb{N}}$, are ensured by Assumption~\ref{ASS_GLOB}; see Theorem 1.2 in \cite{Gra92}. The same assumptions also guarantee existence and uniqueness of a strong solution of \eqref{SDE_limite} starting from any square integrable initial condition $X(0)$; see Theorem 2.1 in \cite{Gra92}.

\subsection{Propagation of chaos}

In addition to the aforementioned assumptions, for the proof of propagation of chaos, we will need the following square integrability condition on the amplitude of the collateral jumps:
\begin{assumpt}
\label{ASS_COLL_JUMPS}
\begin{itemize}
\item[(I2)]$\displaystyle{	\int_0^T \int_{[0,\infty)\times [0,1]^{N}} \|\Theta(x,y,\alpha,h_1,h_2) \mathds{1}_{(0,\lambda(x,\alpha)]}(u)\|^2 du\nu_2(dh)dt < \infty,}
$\\
for all $x,y \in \mathbb{R}^{d}$ and all $\alpha \in \mathcal{M}^1(\mathbb{R}^d)$.
\end{itemize}
\end{assumpt}

We begin by establishing the closeness between the original particle system $X^N$ and the intermediate process $Y^N$.

\begin{proposition}\label{iniziale_intermedio}
Grant Assumptions~\ref{ASS_GLOB} and \ref{ASS_COLL_JUMPS}. Let $X^N$ and $Y^N$ be the solutions of \eqref{SDE_XN} and \eqref{SDE_YN}, respectively. We assume the two processes are driven by the same Brownian motions and Poisson random measures, and start from the same square
integrable and permutation invariant initial condition. Then there exists a constant $C_T>0$ such that, for each fixed $i \in \mathbb{N}$, for all  $N\geq1$\begin{equation} \label{dist-interm}
	\Mean\left[ \sup_{t\in[0,T]} \|X^N_i(t)-Y^N_i(t)\| \right] \leq 
	\frac{C_T}{\sqrt{N}}.
\end{equation}
\end{proposition}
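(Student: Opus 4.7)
The plan is to write $\Delta^N_i(t) := X^N_i(t) - Y^N_i(t)$ as the sum of the natural integrals arising from subtracting \eqref{SDE_YN} from \eqref{SDE_XN}, and then bound each piece in $L^1$. Because both processes are driven by the same Brownian motions and Poisson random measures and share identical drift, diffusion and main-jump structure, the corresponding pairs of terms differ only through the Lipschitz dependence of their coefficients on the state and on the empirical measure. The genuinely new contribution is the collateral jump term: in $X^N$ it appears as a sum of $N-1$ non-compensated Poisson integrals with amplitudes of order $1/N$, while in $Y^N$ it has already been absorbed into a predictable drift. Adding and subtracting the predictable compensator of the $X^N$-collateral jumps, this discrepancy splits into a compensated martingale $M^N_i$ plus an extra Lipschitz drift difference.

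The core estimate concerns
\[
M^N_i(t) := \tfrac{1}{N}\sum_{j\neq i}\int_0^t\!\!\int \Theta(X^N_j,X^N_i,\mu^N_X,h_j,h_i)\,\mathds{1}_{(0,\lambda(X^N_j,\mu^N_X)]}(u)\,\tilde{\mathcal{N}}^j(ds,du,dh).
\]
Its predictable quadratic variation has expectation of order $1/N$: there are $N-1$ independent Poisson contributions, each weighted by $1/N^2$ after squaring, with integrable second-moment intensities by Assumption~\ref{ASS_COLL_JUMPS}(I2). The Burkholder--Davis--Gundy inequality in $L^1$ combined with Jensen's inequality then yields
\[
\Mean\Bigl[\sup_{t\leq T}\|M^N_i(t)\|\Bigr] \;\leq\; C\sqrt{\Mean[\langle M^N_i\rangle_T]} \;\leq\; \frac{C_T}{\sqrt N},
\]
which is the sole source of the announced rate $1/\sqrt N$.

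The remaining terms are all controlled by the Lipschitz assumptions. The $F$-drift difference and the compensator-versus-$Y$-drift difference (the latter via the second inequality of (L1), modulo an $O(1/N)$ diagonal correction coming from the missing $j=i$ term in the $X$-sum, absorbable under (I)) both yield bounds of the form $L\int_0^t (\|\Delta^N_i(s)\|+\rho(\mu^N_X(s),\mu^N_Y(s)))\,ds$. The non-compensated main-jump difference is bounded in the same way by first using $\|\int\!\cdots\mathcal{N}^i\|\leq \int\|\cdots\|\,\mathcal{N}^i$ and then taking expectation, to which the first inequality of (L1) applies directly. By the permutation invariance of $(X^N,Y^N)$, $\Mean[\rho(\mu^N_X(s),\mu^N_Y(s))] \leq \Mean[\|\Delta^N_1(s)\|]$.

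The Brownian diffusion term is the only delicate point in the $L^1$ framework: BDG gives
\[
\Mean\Bigl[\sup_{s\leq t}\Bigl\|\int_0^s(\sigma(X^N_i,\mu^N_X)-\sigma(Y^N_i,\mu^N_Y))\,dB^i_r\Bigr\|\Bigr] \;\leq\; C\,\Mean\Bigl[\Bigl(\int_0^t\|\sigma_X-\sigma_Y\|^2\,dr\Bigr)^{1/2}\Bigr],
\]
and to avoid invoking $L^2$ bounds (unavailable under the $L^1$-type assumption (L1)), I would use $\int_0^t a^2\,dr \leq (\sup_{r\leq t} a)\int_0^t a\,dr$ followed by Young's inequality $\sqrt{xy}\leq \varepsilon x+y/(4\varepsilon)$ to split the right-hand side into a small multiple of $\Mean[\sup_{s\leq t}(\|\Delta^N_i\|+\rho)]$, absorbable on the left, plus an additive $\int_0^t\Mean[\|\Delta^N_i\|+\rho]\,ds$ contribution. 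Setting $v(t):=\Mean[\sup_{s\leq t}\|\Delta^N_1(s)\|]$ and combining all the estimates produces an integral inequality $v(t)\leq C_T/\sqrt N + C\int_0^t v(s)\,ds$, and Gronwall's lemma concludes. The main obstacle I anticipate is precisely this Young-type absorption for the Brownian term; everything else is a bookkeeping exercise with the two Lipschitz conditions.
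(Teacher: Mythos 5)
Your proposal is correct and follows the paper's own decomposition very closely: the same splitting into drift, diffusion, main-jump, and collateral-jump pieces, with the collateral part further split by compensation into a martingale $M^N_i$, a Lipschitz drift discrepancy handled by the second inequality in (L1), and an $O(1/N)$ diagonal correction controlled by (I); the same identification of $M^N_i$ as the sole source of the $O(1/\sqrt N)$ rate via BDG in $L^1$, orthogonality of the $\tilde{\mathcal N}^j$, Assumption~\ref{ASS_COLL_JUMPS}(I2), and Jensen; and the same use of $\Mean[\rho(\mu^N_X,\mu^N_Y)]\le\Mean[\|X^N_1-Y^N_1\|]$ via the diagonal coupling and exchangeability. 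The one genuine point of divergence is the treatment of the diffusion term after BDG. The paper bounds $\bigl(\int_0^t a^2\,ds\bigr)^{1/2}\le \sqrt t\,\sup_{s\le t}a$, which leaves a prefactor $M\sqrt t$ in front of $\Mean\bigl[\sup_{s\le t}\|X^N_i(s)-Y^N_i(s)\|\bigr]$; this can be moved to the left-hand side only when $M\sqrt t<1$, forcing an iteration of the Gronwall argument over subintervals $[kT_0,(k+1)T_0\wedge T]$ with $T_0$ small. Your route — $\bigl(\int_0^t a^2\,ds\bigr)^{1/2}\le\sqrt{(\sup_{s\le t}a)\int_0^t a\,ds}$ followed by Young's inequality with a free parameter $\varepsilon$ — yields an absorbable prefactor $O(\varepsilon)$ that is uniform in $t$, so a single Gronwall application on $[0,T]$ suffices, at the cost of a constant that scales like $1/\varepsilon$. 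Both are sound and stay in the $L^1$ framework; yours trades the interval-iteration bookkeeping for a cleaner one-shot Gronwall.
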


\begin{proof}
To simplify notation, we adopt the following abbreviations:
\begin{align*}
	\Theta_{i,j}(X^N(s^-),h)&\doteq \Theta(X^N_i(s^-),X^N_j(s^-),\mu^N_X(s^-),h_i,h_j), \\
	\lambda_{i}(X^N(s^-))&\doteq \lambda(X^N_i(s^-),\mu^N_X(s^-)),\\
	\psi_i(X^N(s^-),h)&\doteq \psi(X^N_i(s^-),\mu^N_X(s^-),h_i),\\
	U&\doteq [0,\infty)\times [0,1]^{\mathbb{N}}.
\end{align*}
By permutation invariance of both the initial condition and the dynamics, we have, for every $t\in [0,T]$,
\[
	\Mean\left[\sup_{s\in[0,t]} \|X^N_i(s)-Y^N_i(s)\|\right] = \frac{1}{N}\sum_{j=1}^{N} \Mean\left[\sup_{s\in[0,t]}\|X^N_j(s)-Y^N_j(s)\|\right].
\]

Fix $t\in [0,T]$, and set
\begin{align*}
	F_i&\doteq \Mean\left[\int_0^t \|F(X^N_i(s),\mu^N_X(s))-F(Y^N_i(s), \mu^N_Y(s))\|ds\right],\\
	\sigma_i&\doteq \Mean\left[\sup_{r\in[0,t]}\left\| \int_0^r \left( \sigma(X^N_i(s),\mu^N_X(s))-\sigma(Y^N_i(s),\bar\mu^N_Y(s)) \right) dB^i_s \right\|\right],\\
\begin{split}
	\Theta_i&\doteq \Mean\left[ \sup_{r\in[0,t]}\left\|\frac{1}{N}\sum_{j\neq i} \int_{[0,r]\times U} \Theta_{j,i}(X^N(s^-),h)\mathds{1}_{(0,\lambda_j(X^N(s^-))]}(u) \mathcal{N}^j(ds,du,dh)\right.\right. \\
	&\qquad \left.\left. - \frac{1}{N}\sum_{j=0}^N \int_{[0,t]\times U}\Theta_{j,i}(Y^N(s^-),h)\mathds{1}_{(0,\lambda_j(Y^N(s^-))]}(u)ds \,du\nu(dh)\right\|\right],
\end{split}\\
\begin{split}
	\psi_i&\doteq \Mean \left[ \sup_{r\in[0,t]} \left\| \int_{[0,r]\times U}  \psi_i(X^N(s^-),h)\mathds{1}_{(0,\lambda_i(X^N(s^-))]}(u) \mathcal{N}^i(ds,du,dh) \right.\right. \\
	&\qquad \left.\left. - \int_{[0,r]\times U} \psi_i(Y^N(s^-),h)\mathds{1}_{(0,\lambda_i(Y^N(s^-))]}(u)  \mathcal{N}^i(ds,du,dh) \right\|\right].
\end{split}
\end{align*}
Note that all these quantities do not depend on $i$, that is therefore omitted in what follows.
Then
\begin{equation}\label{distanza_sup}
	\Mean\left[\sup_{s\in[0,t]}\|X^N_i(s)-Y^N_i(s)\|\right] \leq F+\sigma+\Theta+\psi.
\end{equation}

The term $F$ can be easily bounded thanks to the Lipschitz condition (Li) and the coupling bound for the $W_1$ Wasserstein metric, and we obtain
\begin{equation*}
	F\leq L\int_0^{t} \Mean\left[\|X^N_i(s)-Y^N_i(s)\|\right] ds+\frac{L}{N}\sum_{j=1}^N\int_0^{t}\Mean\left[\|X^N_j(s)-Y^N_j(s)\|\right] ds.
\end{equation*}
The bound on $\sigma$ involves the Burkholder-Davis-Gundy inequality, and we get, for some constant $M$ not depending on $N$ nor $t$,
\begin{align*}
	\sigma&\leq M \Mean\left[ \left(\int_0^{t} \left(\|X^N_i(s)-Y^N_i(s)\|+\frac{1}{N}\sum_{j=1}^N\|X^N_j(s)-Y^N_j(s)\|\right)^2ds \right)^{1/2} \right] \\
	&\leq M \sqrt{t} \Mean\left [\sup_{s\in[0,t]} \|X^N_i(s)-Y^N_i(s)\| + \frac{1}{N} \sum_{j=1}^N \sup_{s\in[0,t]}\|X^N_j(s)-Y^N_j(s)\|\right].
\end{align*}
The term $\Theta$ needs to be treated again with the Burkholder-Davis-Gundy inequality. In what follows, we denote by $\tilde{\mathcal{N}}^i$ the compensated Poisson measure associated to $\mathcal{N}^i$ and it is crucial the fact that $\{\tilde{\mathcal{N}}^i\}_{i=1,\dots,N}$ is a family of orthogonal martingales. Therefore, for a certain constant $K>0$ coming from the Burkholder-Davis-Gundy inequality, the constant $L>0$ coming from condition (L1) and a constant $C>0$ not depending on $N$ nor $t$, we have{\footnotesize
\begin{align*}
	\Theta&\leq \Mean\left[ \sup_{r\in[0,t]}\left\|\frac{1}{N}\sum_{j\neq i}\int_{[0,r]\times U} \Theta_{j,i}(X^N(s^-),h)\mathds{1}_{[0,\lambda_j(X^N(s^-)))} \tilde{\mathcal{N}}^j(ds,du,dh)\right\|\right]\\
	&+ \Mean\left[ \sup_{r\in[0,t]}\left\|\frac{1}{N}\sum_{j=1}^N \int_{[0,r]\times U} \left( \Theta_{j,i}(X^N(s^-),h)\mathds{1}_{(0,\lambda_j(X^N(s^-))]} - \Theta_{j,i}(Y^N(s^-),h)\mathds{1}_{(0,\lambda_j(Y^N(s^-))]} \right)ds\, du\nu(dh)\right\|\right]\\
	&+ \frac{1}{N} \Mean\left[ \sup_{r\in[0,t]} \left\|\int_{[0,r]\times U} \Theta_{i,i}(X^N(s^-),h)\mathds{1}_{(0,\lambda_j(X^N(s^-))]}ds\,du\,\nu(dh)\right\|\right]\\
	&\leq \frac{K}{N} \Mean\left[ \left( \sum_{j\neq i} \int_0^{t}\int_U \left\|\Theta_{j,i}(X^N(s^-),h)\mathds{1}_{(0,\lambda_j(X^N(s^-))]}(u)\right\|^2 ds\,du\nu(dh) \right)^{1/2} \right] \\
\begin{split}
	&+ \int_0^{t} \Mean\left[\left\| \left\langle \mu^N_s, \int_U\Theta_{\cdot,i}(X^N(s^-),h)\mathds{1}_{[0,\lambda_{\cdot}(X^N(s^-)))}(u)\, du\nu(dh) \right\rangle \right.\right.\\
	&\qquad \left.\left. - \left\langle \bar{\mu}^N_s, \int_U\Theta_{\cdot,i}(Y^N(s^-),h)\mathds{1}_{[0,\lambda_{\cdot}(Y^N(s^-)))}(u)\, du\nu(dh)\right\rangle \right\|\right]ds
\end{split}\\
	&+ \frac{1}{N} \Mean\left[ \int_0^{t} \int_U \left\|\Theta_{i,i}(X^N(s^-),h)\mathds{1}_{[0,\lambda_i(X^N(s^-)))}(u)\right\| du\nu(dh) ds\right]\\
	&\leq \frac{C}{\sqrt{N}}+L\int_0^{t} \Mean\left[\|X^N_i(s)-Y^N_i(s)\|\right]ds +\frac{L}{N}\sum_{j=1}^N\int_0^{t} \Mean\left[\|X^N_j(s)-Y^N_j(s)\|\right] ds+\frac{C}{N}.
\end{align*}}
The term $\psi$ concerns the main jumps of the particle system and is bounded by the positivity property of Poisson processes and the Lipschitz condition (L1):{\small
\begin{align*}
	\psi &\leq \Mean\left[\int_{[0,t]\times U} \left\|\psi_i(X^N(s^-),h)\mathds{1}_{(0,\lambda_i(X^N(s^-))]}(u)- \psi_i(Y^N(s^-),h)\mathds{1}_{(0,\lambda_i(Y^N(s^-))]}(u) \right\| \mathcal{N}^i(ds,du,dh)\right]\\
	&= \Mean\left[ \int_{[0,t]\times U} \left\| \psi_i(X^N(s^-),h)\mathds{1}_{(0,\lambda_i(X^N(s^-))]}(u) - \psi_i(Y^N(s^-),h)\mathds{1}_{(0,\lambda_i(Y^N(s^-))]}(u) \right\|ds\, du\nu(dh)\right]\\
	&\leq L\int_0^{t} \Mean\left[\|X^N_i(s)-Y^N_i(s)\|\right]ds + \frac{L}{N}\sum_{j=1}^N \int_0^{t}\Mean\left[\|X^N_j(s)-Y^N_j(s)\|\right]ds.
\end{align*}}

Therefore, recalling \eqref{distanza_sup}, we find that, for every $t\in [0,T]$, 
\begin{multline*}
	\Mean \left[ \sup_{s\in[0,t]}\|X^N_i(s)-Y^N_i(s)\|\right] \\
	\leq M\sqrt{t} \Mean\left[ \sup_{s\in[0,t]}\|X^N_i(s)-Y^N_i(s)\| \right] + M\sqrt{t} \Mean\left[ \frac{1}{N}\sum_{j=1}^N \sup_{s\in[0,t]} \|X^N_j(s)-Y^N_j(s)\| \right] \\
	+ 3L\int_0^{t} \Mean\left[\|X^N_i(s)-Y^N_i(s)\|\right]ds + \frac{3L}{N} \sum_{j=1}^N \int_0^{t} \Mean\left[\|X^N_j(s)-Y^N_j(s)\|\right] ds + \frac{C}{N}+\frac{C}{\sqrt{N}}.
\end{multline*}
Choose $T_0 > 0$ small enough so that $(1-2M\sqrt{T_0})>0$. By summing over the index $i$ in the above inequality and dividing both sides by $N$, we can move the first two terms on the right-hand side to the left, obtaining, for every $t\in [0,T_{0}]$,
\begin{equation*}
\begin{split}
	\frac{1}{N} \sum_{i=1}^N \Mean\left[ \sup_{s\in[0,t]} \|X^N_i(s)-Y^N_i(s)\|\right] &\leq \frac{6K}{1-2M\sqrt{t}} \int_0^t \frac{1}{N}\sum_{i=1}^N\Mean\left[\sup_{s\in[0,r]}\|X^N_i(s)-Y^N_i(s)\|\right] dr\\
	&\quad + \frac{C}{N(1-2M\sqrt{t})} + \frac{C}{\sqrt{N}(1-2M\sqrt{t})}.
\end{split}
\end{equation*}
An application of Gronwall's lemma yields 
\begin{equation}\label{distanza_zero}
	\frac{1}{N} \sum_{i=1}^N \Mean\left[\sup_{t\in[0,T_0]}\|X^N_i(t)-Y^N_i(t)\|\right] \leq \frac{C_{T_0}}{\sqrt{N}}
\end{equation}
for some finite constant $C_{T_{0}}$ not depending on $N$. Recall that \eqref{distanza_zero} holds on a time interval $[0,T_0]$ for $T_0$ sufficiently small. If $T_0$ is smaller than $T$, then we can repeat the procedure of estimates on the interval $[T_0,(2T_0)\wedge T]$. In this case, we find that, for every $t\in [T_{0},(2T_0)\wedge T]$, 
\begin{align*}
	\frac{1}{N}\sum_{i=1}^N \Mean\left[\sup_{s\in[T_0,t]}\|X^N_i(s)-Y^N_i(s)\|\right]
	&\leq \frac{1}{1-2M\sqrt{t-T_0}} \left(\frac{1}{N}\sum_{i=1}^N \Mean\left[\sup_{s\in[0,T_0]}\|X^N_i(s)-Y^N_i(s)\|\right]\right)\\
	&+ \frac{6K}{1-2M\sqrt{t-T_0}} \int_{T_0}^{t} \frac{1}{N}\sum_{i=1}^N \Mean\left[\sup_{s\in [T_0,r]} \|X^N_i(s)-Y^N_i(s)\|\right]dr \\
	&+ \frac{C}{N(1-2M\sqrt{t-T_0})} +\frac{C}{\sqrt{N}(1-2M\sqrt{t-T_0})},
\end{align*}
where the first term comes from a bound on the initial condition ${\displaystyle \frac{1}{N}\sum_{i=1}^N \Mean\left[\|X^N_i(T_0)-Y^N_i(T_0)\|\right]}$. Hence, again thanks to Gronwall's lemma, for some constant $C_{2,T_0}$,
\begin{equation*}	
	\frac{1}{N}\sum_{i=1}^N \Mean\left[\sup_{s\in[0,(2T_0)\wedge T]} \|X^N_i(s)-Y^N_i(s)\|\right] \leq \frac{C_{2,T_0}}{\sqrt{N}}.
\end{equation*}
We proceed by induction until we cover, after finitely many steps, the entire interval $[0,T]$. By exchangeability of the laws of both the initial and the intermediate process, this yields, for $i=1,\dots,N$ 
$$
\Mean\left[\sup_{s\in [0,T]}\left\|X^N_i(s)-Y^N_i(s)\right\|\right]\leq \frac{C_T}{\sqrt{N}}
$$
  and \eqref{dist-interm} holds.
\end{proof}

In the next, we use a similar coupling technique and we now show propagation of chaos for $Y^N$.

\begin{proposition}\label{intermedio_limite}
Grant Assumptions~\ref{ASS_GLOB} and \ref{ASS_COLL_JUMPS}. Let $\mu_0$ be a probability measure on $\R^d$ such that $\int \|x\|^2 \mu_0(dx)<+\infty$. For $N\in \mathbb{N}$, let $Y^N$ be a solution of Eq.~\eqref{SDE_YN} in $[0,T]$. Assume that $Y^N(0) = (Y^N_1(0), \ldots, Y^N_N(0))$, $N\in \mathbb{N}$, form a sequence of square integrable random vectors that is $\mu$-chaotic in $W_1$. Let $\mu$ be the law of the solution of Eq.~\eqref{SDE_limite} in $[0,T]$ with initial law $\Prb \circ X(0)^{-1} = \mu_0$. Then $Y^N$ is $\mu$-chaotic in $W_1$.
\end{proposition}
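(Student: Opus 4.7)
The plan is to couple the intermediate system $Y^{N}$ with a system of $N$ independent copies $\bar X^{1},\dots,\bar X^{N}$ of the nonlinear process, all driven by the \emph{same} Brownian motions $B^{i}$ and Poisson random measures $\mathcal{N}^{i}$ as $Y^{N}_{i}$, with initial conditions $\bar X^{i}(0)$ chosen to realize an optimal $W_{1}$ coupling of $Y^{N}_{i}(0)$ with $\mu_{0}$ component-wise (and the $\bar X^{i}(0)$ i.i.d.\ $\mu_{0}$). Each $\bar X^{i}$ solves \eqref{SDE_limite} with its own Brownian motion, Poisson measure, and driving law $\mu$. By assumption on the initial condition, $\frac{1}{N}\sum_{i}\mathbf{E}\|Y^{N}_{i}(0)-\bar X^{i}(0)\|\to 0$. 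The conclusion that $Y^{N}$ is $\mu$-chaotic in $W_{1}$ will follow as soon as we show
\[
\mathbf{E}\!\left[\sup_{t\in[0,T]}\|Y^{N}_{i}(t)-\bar X^{i}(t)\|\right]\longrightarrow 0,
\]
since then for any fixed $k$ the law of $(Y^{N}_{1}[0,T],\dots,Y^{N}_{k}[0,T])$ is close in $\rho_{T}$ to $\mu^{\otimes k}$.

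The core estimate proceeds by writing the difference $Y^{N}_{i}(t)-\bar X^{i}(t)$ as a sum of four contributions, exactly as in the proof of Proposition~\ref{iniziale_intermedio}: the drift $F$, the diffusion, the collateral drift $\frac{1}{N}\sum_{j}\lambda(\cdot)\!\int\!\Theta(\cdots)\nu_{2}$ (which in the nonlinear SDE appears as $\langle\mu_{t},\lambda(\cdot,\mu_{t})\int\Theta\,\nu_{2}\rangle$), and the main jump $\psi$. Crucially, because the intermediate process has absorbed all collateral jumps into a deterministic drift, no compensation of the Poisson measure is needed for the collateral term. The drift and diffusion terms are handled by (Li) and Burkholder-Davis-Gundy; the $\psi$-term is controlled \emph{without} compensating $\mathcal{N}^{i}$, bounding the stochastic integral of a positive integrand by the integral against $ds\,du\,\nu(dh)$ and then using the first inequality of (L1); the collateral drift uses the second inequality of (L1). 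Each bound produces factors of $\|Y^{N}_{i}-\bar X^{i}\|$ and $\rho(\mu^{N}_{Y}(s),\mu_{s})$.

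The decisive step is to split
\[
\rho\bigl(\mu^{N}_{Y}(s),\mu_{s}\bigr)\;\leq\;\rho\bigl(\mu^{N}_{Y}(s),\bar\mu^{N}(s)\bigr)+\rho\bigl(\bar\mu^{N}(s),\mu_{s}\bigr),
\]
where $\bar\mu^{N}(s)\doteq\frac{1}{N}\sum_{j}\delta_{\bar X^{j}(s)}$. The first summand is bounded by $\frac{1}{N}\sum_{j}\|Y^{N}_{j}(s)-\bar X^{j}(s)\|$, so by exchangeability it collapses, after averaging in $i$, to the same quantity we are trying to estimate. The second summand is the $W_{1}$-distance between the empirical measure of $N$ i.i.d.\ samples from $\mu_{s}$ and $\mu_{s}$ itself; its expectation tends to zero as $N\to\infty$ by the quantitative Fournier-Guillin estimates \cite{FoGu15}, provided we have sufficient moments of $\mu_{s}$, which are granted by the square-integrability of $X(0)$ together with standard moment bounds for \eqref{SDE_limite} under Assumptions~\ref{ASS_GLOB} and \ref{ASS_COLL_JUMPS}. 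A path-space version (controlling $\mathbf{E}\rho_{T}(\bar\mu^{N},\mu)$) is obtained in the same way.

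Combining these estimates, summing on $i$ and dividing by $N$, one gets for $\phi_{N}(t)\doteq\frac{1}{N}\sum_{i}\mathbf{E}[\sup_{s\leq t}\|Y^{N}_{i}(s)-\bar X^{i}(s)\|]$ an inequality of the form
\[
\phi_{N}(t)\;\leq\;M\sqrt{t}\,\phi_{N}(t)+C\!\int_{0}^{t}\phi_{N}(r)\,dr+\varepsilon_{N},
\]
where $\varepsilon_{N}\to 0$ contains both the initial-condition error and the $\mathbf{E}\rho_{T}(\bar\mu^{N},\mu)$ term. Exactly as in Proposition~\ref{iniziale_intermedio}, one fixes $T_{0}$ with $1-M\sqrt{T_{0}}>0$, applies Gronwall on $[0,T_{0}]$, then iterates on consecutive intervals of length $T_{0}$ to cover $[0,T]$, using the endpoint bound as a new initial error at each step. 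Exchangeability then gives $\mathbf{E}[\sup_{[0,T]}\|Y^{N}_{i}-\bar X^{i}\|]\to 0$ for every fixed $i$. The main obstacle is the treatment of the two non-standard ingredients simultaneously: avoiding compensation for the $\psi$-term (forcing the proof to live in $L^{1}$ throughout) and bootstrapping the Fournier-Guillin bound on $\mathbf{E}\rho_{T}(\bar\mu^{N},\mu)$ from marginal moment estimates to a path-supremum estimate compatible with the Gronwall loop.
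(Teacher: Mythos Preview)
Your proposal is correct and follows essentially the same route as the paper's own proof: couple $Y^{N}$ with $N$ i.i.d.\ copies $\bar X^{N}_{i}$ of the nonlinear process via the same Brownian motions and Poisson measures, decompose the difference into the four terms $\bar F,\bar\sigma,\bar\Theta,\bar\psi$, handle them with (Li), BDG, and (L1) respectively, split $\rho(\mu^{N}_{Y}(s),\mu_{s})$ by the triangle inequality through $\bar\mu^{N}(s)$, invoke Fournier--Guillin for the i.i.d.\ piece, and close with Gronwall on short intervals iterated to cover $[0,T]$.

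One simplification relative to what you flag as ``the main obstacle'': you do \emph{not} need a path-space Fournier--Guillin bound on $\mathbf{E}\rho_{T}(\bar\mu^{N},\mu)$. In the Gronwall loop the Wasserstein distance $\rho(\mu^{N}_{Y}(s),\mu_{s})$ appears only inside time integrals $\int_{0}^{t}\cdots\,ds$ (and, from the BDG term, under a square root of such an integral), so after the triangle split it suffices to control
\[
\beta^{N}\doteq\sup_{t\in[0,T]}\mathbf{E}\bigl[\rho(\bar\mu^{N}(t),\mu_{t})\bigr],
\]
i.e.\ the \emph{fixed-time} empirical-to-law Wasserstein distance, uniformly in $t$. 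This follows directly from \cite{FoGu15} together with the bound $\sup_{t\in[0,T]}\mathbf{E}[\|\bar X^{N}_{i}(t)\|^{2}]<\infty$ under Assumption~\ref{ASS_GLOB}. No bootstrapping to a path-supremum estimate is required.
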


\begin{proof}
In order to get the thesis, we set a coupling procedure. Let the processes $Y^N_i$, $N\in \mathbb{N}$, $i\in \{1,\ldots,N\}$ be all defined on the filtered probability space $(\Omega,\mathcal{F},(\mathcal{F}_t)_{t\geq0},\Prb)$ with respect to the family $(B_i,\mathcal{N}^i)_{i\in\mathbb{N}}$ of Brownian motions and Poisson random measures. Since $(Y^N(0))$ is $\mu$-chaotic in $W_1$ by hypothesis, we assume, as we may, that our stochastic basis carries a triangular array $(\bar{X}^N_{i}(0))_{i\in \{1,\ldots,N\}, N\in \mathbb{N}}$ of identically distributed $\R^d$-valued random variables with common distribution $\mu$ such that $(\bar{X}^N_{i}(0))_{i\in \{1,\ldots,N\}, N\in \mathbb{N}}$ and $(B_i,\mathcal{N}^i)_{i\in\mathbb{N}}$ are independent, the sequence $(\bar{X}^N_{i}(0))_{i\in \{1,\ldots,N\}}$ is independent for each $N$, and
\[
	\phi^N \doteq \Mean\left[ \left\|\bar X^N_i(0) - Y^N_i(0)\right\|\right] 
\]
tends to zero as $N\to\infty$.

For $N\in \mathbb{N}$, $i\in \{1,\ldots,N\}$, let $\bar X^N_i$ be the unique strong solution of Eq.~\eqref{SDE_limite} in $[0,T]$ with initial condition $X^N_{i}(0)$, driving Brownian motion $B_i$ and Poisson random measure $\mathcal{N}^i$. Notice that the processes $X^N_1,\ldots, X^N_N$ are independent and identically distributed for each $N$.

By definition of the metric $\rho_T$, the $\mu$-chaoticity in $W_1$ of the sequence $Y^N$ follows from
\begin{equation}\label{limit}
	\lim_{N\rightarrow\infty} \Mean\left[ \sup_{t\in[0,T]}\left\|\bar X^N_i(t) - Y^N_i(t)\right\|\right] =0,
\end{equation}
 for every fixed $i\in\mathbb{N}$. However, the limit is the same by exchangeability of components. The term in \eqref{limit} is bounded by 
\begin{align*}
	&\Mean\left[\sup_{t\in[0,T]}\|Y^N_i(t)-\bar X^N_i(t)\|\right]\leq \phi^N + \bar F + \bar \sigma + \bar \Theta + \bar \psi, \\
\intertext{where}
	\bar F &\doteq \Mean\left[\int_0^T \|F(Y^N_i(s),\mu_Y^N(s))-F(\bar X^N_i(s),\mu_s)\|ds\right],\\
	\bar \sigma &\doteq \Mean\left[\sup_{t\in[0,T]}\left\|\int_0^t \sigma(Y^N_i(s),\mu_Y^N(s))-\sigma(\bar X^N_i(s),\mu_s)dB^i_s\right\|\right],\\
\begin{split}
	\bar \Theta &\doteq \Mean\left[\sup_{t\in[0,T]}\left\|
\int_0^t \left\langle \mu^N_Y(s), \int_{U}\Theta(\cdot, Y_i^N(s),\mu^N_Y(s),h) \mathds{1}_{(0,\lambda_j(\cdot,\mu^N_Y(s))]}(u)\, du\nu_2(dh)\right\rangle ds \right.\right.\\
&\qquad - \left.\left. \int_{0}^t\int_{U} \left\langle \mu_s, \Theta(\cdot,\bar X_i^N(s),\mu_{s},h) \mathds{1}_{(0,\lambda_j(\cdot,\mu_{s})]}(u)\, du\nu(dh)\right\rangle ds\right\|\right],
\end{split}\\
\begin{split}
	\bar \psi &\doteq \Mean\left[\sup_{t\in[0,T]}\left\|\int_{[0,t]\times U}\psi(Y_i^N(s^-),\mu^N_Y(s),h)\mathds{1}_{(0,\lambda(Y_i^N(s^-),\mu^N_Y(s^-))]}(u) \right.\right.\\
	&\qquad \left.\left. - \psi(\bar X^N_i(s^-),\mu_{s^-},h)\mathds{1}_{(0,\lambda_i(\bar X_i^N(s^-),\mu_{s^-})]}(u)\mathcal{N}^i(dt,du,dh)\right\|\right].
\end{split}
\end{align*}
The terms $\bar F$, $\bar \sigma$, and $\bar \psi$ are treated exactly as in Proposition~\ref{iniziale_intermedio}, whereas the term $\bar \Theta$ only requires the application of the Lipschitz condition (L1). By mimicking the steps in Proposition~\ref{iniziale_intermedio}, there exists a $T_0>0$ small enough and a constant $C_{T_0}\geq 0$, independent of $N$, such that we can apply Gronwall's Lemma and obtain 
\begin{equation*}
	\Mean\left[\sup_{t\in[0,T_0]}\|Y^N_i(t)-\bar X^N_i(t)\|\right] \leq C_{T_0} \left(\int_0^{T_0}\Mean\left[\rho(\mu^N_{Y}(t),\mu_t)\right]dt+\sqrt{\int_0^{T_0}\Mean[\rho(\mu^N_Y(t),\mu_t)^2]dt}+\phi^N\right).
\end{equation*}
By triangular inequality, for every fixed $t\,\in\, [0,T_0]$,
{\small
\[
\begin{split}
\Mean\left[\rho(\mu^N_{Y}(t),\mu_t)\right] & \leq \Mean\left[\rho(\mu^N_{Y}(t),\mu^N_{\bar{X}}(t))\right]+\Mean\left[\rho(\mu^N_{\bar{X}}(t),\mu_t)\right] \\
& \leq \Mean\left[\sup_{t\in[0,T_0]}\|Y^N_i(t)-\bar X^N_i(t)\|\right]+\Mean\left[\rho(\mu^N_{\bar{X}}(t),\mu_t)\right]
\end{split} \] }
Then, for a $T_0$ sufficiently small, using again Gronwall Lemma, there exists a positive constant, depending on $T_0$, that by abuse of notation we will indicate it again with $C_{T_0}>0$,  such that
\begin{multline*}
\Mean\left[\sup_{t\in[0,T_0]}\|Y^N_i(t)-\bar X^N_i(t)\|\right] \\ \leq C_{T_0}\left(\int_0^{T_0} \Mean\left[\rho(\mu^N_{\bar{X}}(t),\mu_t)\right]dt+\sqrt{\int_0^{T_0} \Mean\left[\rho(\mu^N_{\bar{X}}(t),\mu_t)^2\right]dt}+\phi^N\right).
\end{multline*}
We see that the bound on \eqref{limit} depends on the initial conditions and on $\Mean\left[\rho(\mu^N_{\bar{X}}(t),\mu_t)\right]$, that is the distance, at every fixed time $t\in[0,T]$,   between the empirical measure of $N$ i.i.d. copies of the solution of the process with law $\mu$ and the law $\mu_t$ itself. The rate of convergence of empirical measures in Wasserstein distance depends on the moments of $\bar{X}(t)$ and on the dimension $d$, see Theorem 1 in \cite{FoGu15}, for a complete characterization.   
Since
\[
\sup_{t \in [0,T]} \Mean\left[\bar{X}_i^2(t) \right] < +\infty,
\]
it follows from \cite{FoGu15} that, setting
\[
\beta^N := \sup_{t \in [0,T]}\Mean[\rho(\mu^N_{\bar{X}}(t),\mu_t)],
\]
we have
\[
\lim_{N\rightarrow\infty} \beta^N = 0.
\]
Therefore, we know that there exists a constant $C_{T_0}>0$ such that, for $N$ going to infinity, we have 
$$
\Mean\left[\sup_{t\in[0,T_0]}\|Y^N_i(t)-\bar X^N_i(t)\|\right]\leq C_{T_0}\left(\beta^N+\phi^N\right).
$$ 
Iterating this procedure as in Proposition~\ref{iniziale_intermedio}, we extend the above result to $[0,T]$, i.e.
$$
\Mean\left[\sup_{t\in[0,T]}\|Y^N_i(t)-\bar X^N_i(t)\|\right]\leq C_{T}\left(\beta^N+\phi^N\right).
$$ 
for a suitable constant $C_T$.
 This establishes $\mu$-chaoticity of $Y^N$ in $W_1$.
 
\end{proof}

\begin{remark} \label{rateconv}
By the results in \cite{FoGu15}, $\beta^N = O\left(\frac{1}{\sqrt{N}}\right)$ except possibly for dimensions $d=1,2$, where logarithmic corrections may appear. In Proposition~\ref{iniziale_intermedio} we prove that, in any situation, the simultaneous jumps in  the form presented here, do not worsen this rate of convergence, since they add a term of order $\frac{1}{\sqrt{N}}$. Note that if the components of the initial condition are i.i.d., then also $\phi^N = O\left(\frac{1}{\sqrt{N}}\right)$, and is this case we get, for some $C_T>0$,
\[
\Mean\left[\sup_{t\in[0,T]}\|Y^N_i(t)-\bar X^N_i(t)\|\right]\leq \frac{C_{T}}{\sqrt{N}}.
\]

\end{remark}

The propagation of chaos property for $X^N$ is now an immediate consequence of Propositions \ref{iniziale_intermedio} and \ref{intermedio_limite}.

\begin{corollary}\label{propch_limite}
	Grant Assumptions~\ref{ASS_GLOB} and \ref{ASS_COLL_JUMPS}. Let $\mu_0$ be a probability measure on $\R^d$ such that $\int \|x\|^2 \mu_0(dx)<+\infty$. For $N\in \mathbb{N}$, let $X^N$ be a solution of Eq.~\eqref{SDE_XN} in $[0,T]$. Assume that $X^N(0) = (X^N_1(0),\ldots, X^N_N(0))$, $N\in \mathbb{N}$, form a sequence of square integrable random vectors that is $\mu_0$-chaotic in $W_1$. Let $\mu$ be the law of the solution of Eq.~\eqref{SDE_limite} in $[0,T]$ with initial law $\Prb \circ X(0)^{-1} = \mu_0$. Then $X^N$ is $\mu$ chaotic in $W_1$.
\end{corollary}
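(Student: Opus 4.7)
The plan is to piece together the two preceding propositions on a single enlarged probability space. Concretely, I would realize the particle system $X^{N}$ solving \eqref{SDE_XN} from the given $\mu_{0}$-chaotic initial data using the driving noises $(B_{i},\mathcal{N}^{i})_{i\in\mathbb{N}}$, and on the same basis construct the intermediate process $Y^{N}$ solving \eqref{SDE_YN} with the identical initial condition $Y^{N}(0)=X^{N}(0)$ and the same noises. Proposition~\ref{iniziale_intermedio} then controls each coordinate by
\[
	\Mean\left[\sup_{t\in[0,T]}\|X^{N}_{i}(t)-Y^{N}_{i}(t)\|\right]\leq \frac{C_{T}}{\sqrt{N}}.
\]

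Next I would further enlarge the basis exactly as in the proof of Proposition~\ref{intermedio_limite} so that it carries a triangular array $(\bar X^{N}_{i}(0))_{i\leq N,\,N\geq 1}$ of i.i.d.\ $\mu_{0}$-distributed variables, independent of the noises, coupled to $Y^{N}(0)=X^{N}(0)$ with cost $\phi^{N}\doteq\Mean[\|\bar X^{N}_{i}(0)-Y^{N}_{i}(0)\|]\to 0$; such a coupling is available because the shared initial data is $\mu_{0}$-chaotic in $W_{1}$. Solving \eqref{SDE_limite} for each $i$ with initial condition $\bar X^{N}_{i}(0)$ and driving noise $(B_{i},\mathcal{N}^{i})$ yields i.i.d.\ copies $\bar X^{N}_{1},\dots,\bar X^{N}_{N}$ of common law $\mu$, and Proposition~\ref{intermedio_limite} provides
\[
	\Mean\left[\sup_{t\in[0,T]}\|Y^{N}_{i}(t)-\bar X^{N}_{i}(t)\|\right]\xrightarrow[N\to\infty]{} 0.
\]

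The triangle inequality combined with the two displays above gives, for each fixed $i$, $\Mean[\sup_{t\in[0,T]}\|X^{N}_{i}(t)-\bar X^{N}_{i}(t)\|]\to 0$. For any $k\in\mathbb{N}$, the i.i.d.\ vector $(\bar X^{N}_{1},\dots,\bar X^{N}_{k})$ has law $\mu^{\otimes k}$ and supplies a coupling of $\Law(X^{N}_{1},\dots,X^{N}_{k})$ with $\mu^{\otimes k}$ whose cost, under the sum-of-sup-norms distance on $\mathbf{D}([0,T],\R^{d})^{k}$, is bounded by $\sum_{i=1}^{k}\Mean[\sup_{t}\|X^{N}_{i}(t)-\bar X^{N}_{i}(t)\|]\to 0$; together with the permutation invariance of $X^{N}$ inherited from the initial data, this is exactly $\mu$-chaoticity in $W_{1}$ in the sense of Definition~\ref{def:ch}. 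Since all the genuine estimation work already sits inside Propositions~\ref{iniziale_intermedio} and \ref{intermedio_limite}, no real obstacle remains; the only point requiring care is that the two coupling constructions be performed on the \emph{same} enlarged probability space so that the triangle inequality may legitimately be applied pathwise.
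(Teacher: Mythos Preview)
Your proposal is correct and follows precisely the route the paper intends: the paper gives no explicit proof of this corollary, stating only that it is ``an immediate consequence of Propositions~\ref{iniziale_intermedio} and~\ref{intermedio_limite}'', and what you have written is a faithful unpacking of that consequence via the common coupling and the triangle inequality.
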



\section{Non-globally Lipschitz drift}\label{non_lip_drift}

We are interested in enlarging the class of systems for which propagation of chaos holds. In this Section we relax the Lipschitz assumption on the drift, allowing gradients of general convex potentials. This includes relevant examples as those appeared in \cite{Da83} and \cite{gar13}.

\subsection{Particle system} 
Consider the particle system \eqref{SDE_XN} in Section~\ref{Particle_system_and_limit}. The coefficients are supposed to satisfy the following set of conditions.  
\begin{assumpt}\label{ASS_NL2} \begin{itemize} \item[(U)] The drift coefficient $F\!:\mathbb{R}^d\times \mathcal{M}(\mathbb{R}^d)\rightarrow \mathbb{R}^d$ is of the form 
\begin{equation*}
	F(x,\alpha) = -\triangledown  U(x) + b(x,\alpha),
\end{equation*}
for all $x \in\mathbb{R}^d$ and all $\alpha \in \mathcal{M}(\mathbb{R}^d)$,
where $U$ is convex and $\mathcal{C}^1$. The function $b$ is assumed to be globally Lipschitz in both variables,
 and for all $x\,\in\,\mathbb{R}^d$  we have $\sup_{\alpha\in \mathcal{M}^1(\mathbb{R}^d)}b(x,\alpha)<\infty$.
\item[(LD)] The diffusion coefficient $\sigma\!:\mathbb{R}^d\times\mathcal{M}(\mathbb{R}^d)\rightarrow \mathbb{R}^{d\times d_1}$ satisfies the usual global Lipschitz condition, i.e.,
$\exists\,\tilde L > 0$ such that for all $x,y\in \mathbb{R}^d$, all $\alpha,\gamma\in \mathcal{M}(\mathbb{R}^d)$,
\begin{equation*}
	\|\sigma(x,\alpha)-\sigma(y,\gamma)\|	\leq \tilde L \left(\|x-y\|+\rho(\alpha,\gamma)\right).
\end{equation*}
Moreover, for all $x\,\in\,\mathbb{R}^d$ $\sup_{\alpha\in\mathcal{M}^1(\mathbb{R}^d)}\sigma(x,\alpha)<\infty$. 
\end{itemize}
\noindent The jumps' coefficients satisfies conditions $(I)$ and $(L1)$,  from Section \ref{macro}.
\end{assumpt}
\noindent As before, we set $L\doteq \bar L \vee \tilde L$.\\

\begin{remark}
Condition $(U)$ is a natural choice when one wants to relax globally-Lipschitz conditions on coefficients. It induces a process whose trajectories are strongly constrained by the convex potential. This attracting drift, even when combined with an unbounded jump rate, should prevent the process from exploding in finite time. We will see that this is what happens provided the jump rate is in some way ``controllable'', as it is under the Lipschitz assumption $(L1)$.
\end{remark}

\subsection{McKean-Vlasov equation with non-Lipschitz drift}
\label{non-standard-drift}

Consider the stochastic differential equation
\begin{multline} \label{Mckean-vlasov_non_lipschitz}
	dX(t) = F(X(t),\mu_{t})dt + \sigma(X(t),\mu_{t})dB_t\\
	+ \int_{[0,\infty)\times[0,1]^{\mathbb{N}}} \psi(X(t^-),\mu_{t^-},h_1) \mathds{1}_{(0,\lambda(X(t^-),\mu_{t^-})]}(u) \mathcal{N}(dt,du,dh),
\end{multline} 
where $\mu_{t} = \Law(X(t))$, $B$ is a $d_1$-dimensional Brownian motion and  $\mathcal{N}$ stationary Poisson random measures with characteristic measure $l\times l\times \nu$.

\begin{theorem}\label{existence_uniqueness_McKeanVlasov_NONSTANDARD_DRIFT}
Let the coefficients of the nonlinear \mbox{SDE}~\eqref{Mckean-vlasov_non_lipschitz} satisfy
Assumption~\ref{ASS_NL2}. Then for all square integrable initial conditions $X(0)$~$\in$~$\mathbb{R}^d$, Eq.~\eqref{Mckean-vlasov_non_lipschitz} admits a unique strong solution.
\end{theorem}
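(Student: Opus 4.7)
The plan is to construct the solution by a Picard-type fixed-point argument on $\left(\mathcal{M}^1(\mathbf{D}([0,T_0],\mathbb{R}^d)),\rho_{T_0}\right)$ for a suitably small $T_0 > 0$, and then iterate on consecutive subintervals to reach $T$, in the spirit of the induction already used in Proposition~\ref{iniziale_intermedio}. For a fixed $\mu\in\mathcal{M}^1(\mathbf{D}([0,T_0],\mathbb{R}^d))$ with one-dimensional marginals $(\mu_t)$, I would first solve the \emph{frozen} SDE obtained from \eqref{Mckean-vlasov_non_lipschitz} by replacing $\mu_t=\Law(X(t))$ with the prescribed $\mu_t$. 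Its drift $-\triangledown U(x)+b(x,\mu_t)$ is the sum of the monotone operator $-\triangledown U$ (by convexity of $U$) and a globally Lipschitz perturbation, the diffusion $\sigma(\cdot,\mu_t)$ is Lipschitz, and the jump coefficients satisfy (I) and (L1). Classical well-posedness for SDEs with monotone drift plus Lipschitz diffusion, combined with a Picard argument in $L^1$ for the jump component, produces a unique strong solution $X^{\mu}$, so that $\Phi(\mu):=\Law(X^{\mu})$ is well defined. To see that $\Phi$ maps $\mathcal{M}^1(\mathbf{D})$ into itself, I would apply It\^o's formula to the smoothed norm $\phi_{\varepsilon}(x)=\sqrt{\|x\|^2+\varepsilon}$, exploiting the bound $x\cdot(-\triangledown U(x))\leq \|x\|\,\|\triangledown U(0)\|$ provided by monotonicity of $\triangledown U$, the at-most-linear growth of $b,\sigma$ from (U)-(LD), and the jump bound (I); letting $\varepsilon\to 0$, using BDG on the diffusion martingale and Gronwall closes the a priori estimate for $\Mean\sup_{t\leq T_0}\|X^{\mu}_t\|$.

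To show that $\Phi$ is contractive for $T_0$ small, I would couple $X^{\mu}$ and $X^{\nu}$ on the same stochastic basis with the same $B$ and $\mathcal{N}$, set $Z_t:=X^{\mu}_t-X^{\nu}_t$, and apply It\^o to $\phi_{\varepsilon}(Z_t)$. By convexity of $U$,
\[
Z_t\cdot\bigl(-\triangledown U(X^{\mu}_t)+\triangledown U(X^{\nu}_t)\bigr)\leq 0,
\]
so the non-Lipschitz part of the drift contributes a non-positive term and drops out of the estimate. The remaining Lipschitz contributions of $b$, $\sigma$, and of $\psi\mathds{1}_{(0,\lambda]}$ give, after $\varepsilon\to 0$, taking expectation and BDG on the diffusion martingale,
\[
\Mean\sup_{s\leq t}\|Z_s\|\leq C\int_0^t \Mean\sup_{r\leq s}\|Z_r\|\,ds+C\int_0^t \rho(\mu_s,\nu_s)\,ds.
\]
Since $\rho(\mu_s,\nu_s)\leq \rho_{T_0}(\mu,\nu)$, Gronwall's lemma yields $\rho_{T_0}(\Phi(\mu),\Phi(\nu))\leq C_1 T_0\,\rho_{T_0}(\mu,\nu)$ with $C_1$ independent of $T_0$. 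Choosing $C_1 T_0<1$ makes $\Phi$ a contraction, and Banach's theorem delivers a unique fixed point, hence a unique strong solution of \eqref{Mckean-vlasov_non_lipschitz} on $[0,T_0]$. Iterating on $[T_0,2T_0],\dots$ covers $[0,T]$, while uniqueness on the whole $[0,T]$ follows from the same coupling estimate applied to any two strong solutions that share the initial condition and the driving noise.

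The hard part is reconciling the $L^1$ framework imposed by (L1) with the fact that convexity of $U$ is most naturally exploited via a second-order It\^o expansion of $\|Z_t\|^2$---yet (L1) does not deliver any $L^2$-Lipschitz control of $\psi\mathds{1}_{(0,\lambda]}$, so the usual $|\Delta Z|^2$ terms cannot be closed. Working with $\phi_{\varepsilon}(Z)=\sqrt{\|Z\|^2+\varepsilon}$ keeps all estimates first-order, so the jump contribution is controlled by (L1) via the elementary inequality $\phi_{\varepsilon}(z+\delta)-\phi_{\varepsilon}(z)\leq \|\delta\|$, and convexity still eliminates the singular drift term; the remaining technical hurdle is the It\^o correction $\tfrac{1}{2}\mathrm{tr}\bigl[(\sigma^{\mu}-\sigma^{\nu})(\sigma^{\mu}-\sigma^{\nu})^{\top}\nabla^2\phi_{\varepsilon}(Z)\bigr]$, which becomes singular as $\varepsilon\to 0$ at points where $Z_t=0$. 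Closing the bound uniformly in $\varepsilon$---either by absorbing the correction through the Lipschitz structure of $\sigma$ before passing to the limit, or by a Meyer-It\^o-Tanaka formula for $\|Z\|$ and a verification that the local-time contribution at the origin vanishes under (U)-(LD)-(L1)---is where I expect the main technical care to lie.
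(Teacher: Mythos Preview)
Your approach is essentially the paper's: freeze the law, solve the parametrized SDE (the paper does this in an appendix lemma via Hasminskii's non-explosion test for the diffusion part and a Picard step for the jumps, much as you sketch), then run a contraction argument on $\mathcal{M}^1(\mathbf{D}([0,T_0],\mathbb{R}^d))$ by applying It\^o to a smooth approximation of the norm, exploiting convexity of $U$ to kill the singular drift, and iterating on subintervals. Two small points of comparison. First, the paper uses the piecewise approximation $f^\epsilon(x)=\|x\|\mathds{1}_{\|x\|>\epsilon}+(\|x\|^2/2\epsilon+\epsilon/2)\mathds{1}_{\|x\|\le\epsilon}$ rather than your $\sqrt{\|x\|^2+\epsilon}$; its Hessian equals $I/\epsilon$ on the inner ball and $I/\|x\|-xx^\top/\|x\|^3$ outside, and the paper combines this explicit structure with the Lipschitz bound on $\sigma$ to control the It\^o correction term directly, claiming a bound of the form $KL\int \Mean[\sup\|X^1-X^2\|]\,dt$---so the difficulty you flag is handled (tersely) by that specific choice rather than by Tanaka/local-time arguments. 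Second, your displayed contraction estimate omits the $C\sqrt{t}\,\Mean\sup_{s\le t}\|Z_s\|$ contribution from BDG on the diffusion martingale; in the paper this term is kept and absorbed into the left-hand side by choosing $T_0$ with $1-C_1L\sqrt{T_0}>0$ \emph{before} applying Gronwall, which is also what forces the subinterval structure. Otherwise the strategies coincide.
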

\begin{proof}

 Let $P^1$ and $P^2$ two laws on $\mathbf{D}([0,T],\mathbb{R}^d)$ and suppose that $X^1$ and $X^2$ are two solutions of the following \mbox{SDE}, for $k=1,2$:
\begin{multline} \label{Mckean-vlasov_non_lipschitz_uniqueness}
	dX^k(t) = F(X^k(t),P^k_{t})dt + \sigma(X^k(t),P^k_{t})dB_t\\
	+ \int_{[0,\infty)\times[0,1]^{\mathbb{N}}} \psi(X^k(t^-),P^k_{t^-},h_1) \mathds{1}_{(0,\lambda(X^k(t^-),P^k_{t^-})]}(u) \mathcal{N}(dt,du,dh),
\end{multline} 
 defined on the same probability space $\left(\Omega,\mathcal{F},(\mathcal{F}_t),\Prb\right)$  with the same $\mathcal{F}_t$-Brownian motion $B$, the same  Poisson random measure $\mathcal{N}$ and with initial condition $X^1(0) = X^2(0) = \xi$ $\Prb$-almost surely. The well-posedness of Eq.~\eqref{Mckean-vlasov_non_lipschitz_uniqueness} is ensured by Lemma~\ref{lemma_SDE_parametrizzata}. Let $Q^1$ and $Q^2$ be the laws of the solutions on $\mathbf{D}([0,T),\mathbb{R}^d)$ and let $\Gamma$ be the map that associates $Q^k$ to $P^k$.  We are interested in proving that the map $\Gamma$ is a contraction for the $W_1$ Wasserstein norm. Hence, we want to bound the distance 
\begin{equation}\label{differenza}
\rho_T(Q^1,Q^2)\leq\mathbf{E}\left[\sup_{t\in[0,T]}\|X^1(t)-X^2(t)\|\right].
\end{equation} 
The idea here, in order to exploit the convexity of $U$, is to apply Ito's rule. A classical approach consists in applying Ito's rule to a quantity of type $(X^1_t-X^2_t)^2$; this $L^2$ approach does not work in presence of jump terms. For this reason we rather use a $L^1$ approach. To this aim, for all $\epsilon>0$ 
we define the following smooth approximation of the norm $$f^{\epsilon}(x)\doteq \|x\|\mathds{1}(\|x\|>\epsilon)+\left(\frac{\|x\|^2}{2\epsilon}+\frac{\epsilon}{2}\right)\mathds{1}(\|x\|\leq \epsilon).$$
Then, by Ito's rule and Fatou's Lemma, we have 
\begin{multline*}
	\Mean\left[\sup_{t\in[t_0,t_1]}\|X^1(t)-X^2(t)\|\right] \leq \liminf_{\epsilon\downarrow 0}\Mean\left[\sup_{t\in[t_0,t_1]}f^{\epsilon}\left(X^1(t)-X^2(t)\right)\right] \\
	 \leq \liminf_{\epsilon\downarrow 0}\left(i_{\epsilon}[t_0,t_1] + u_{\epsilon}[t_0,t_1]  + b_{\epsilon}[t_0,t_1] +\sigma_{\epsilon}[t_0,t_1]+\Sigma_{\epsilon}[t_0,t_1]+\Lambda_{\epsilon}[t_0,t_1] \right),
\end{multline*}
where, for $t_1\in [t_0,T]$, we set
{\small \begin{align*}
	i_{\epsilon}[t_0,t_1]&\doteq \Mean\left[f^{\epsilon}\left(X^1(t_0)-X^2(t_0)\right) \right],\\
	u_{\epsilon}[t_0,t_1]&\doteq \Mean\left[\sup_{t\in[t_0,t_1]} -\int_{t_0}^t\triangledown f^{\epsilon}\left(X^1(s)-X^2(s)\right) \cdot \triangledown\left( U(X^1(s))-U(X^2(s))\right)ds  \right],\\
	b_{\epsilon}[t_0,t_1]&\doteq \Mean\left[\sup_{t\in[t_0,t_1]} \int_{t_0}^t\triangledown f^{\epsilon}\left(X^1(s)-X^2(s)\right) \cdot\left(b(X^1(s),P^1_s)-b(X^2(s),P^2_s)\right)ds  \right],\\
	\sigma_{\epsilon}[t_0,t_1]&\doteq\displaystyle{ \frac{1}{2}\Mean\Bigg[\sup_{t\in[t_0,t_1]} \int_{t_0}^t  }\\
	 & \displaystyle{ \qquad \qquad  Tr\Big[\left(\sigma(X^1(s),P_s^1)-\sigma(X^2(s),P_s^2) \right)^T H_{f^{\epsilon}(X^1(s)-X^2(s))}\left(\sigma(X^1(s),P_s^1)-\sigma(X^2(s),P_s^2) \right) \Big] ds \Bigg],}\\
	\Sigma_{\epsilon}[t_0,t_1]&\doteq  \Mean\left[\sup_{t\in[t_0,t_1]} \int_{t_0}^{t} \triangledown f^{\epsilon}\left(X^1(s)-X^2(s)\right)\cdot \left(\sigma(X^1(s),P_s^1)-\sigma(X^2(s),P_s^2)\right)dB_s \right],\\
	\Lambda_{\epsilon}[t_0,t_1]&\displaystyle{\doteq \Mean\Bigg[\sup_{t\in[t_0,t_1]} \int_{t_0}^t\int_{[0,1]}\int_0^{\infty} f^{\epsilon}\left(X^1(s)+\psi(X^1(s),P^1_s,h)\mathds{1}_{u\leq\lambda(X^1(s),P^1_s)}-X^2(s)\right.}\\
	&\displaystyle{ \qquad \qquad \qquad \qquad \qquad\qquad\left.-\psi(X^2(s),P^2_s,h)\mathds{1}_{u\leq\lambda(X^2(s),P^2_s)}\right)-f^{\epsilon}\left(X^1(s)-X^1(s)\right)dsdu \nu_1(dh)\Bigg].}
\end{align*}}
Notice that, by the assumption of convexity of $U$, for all $\mathbf{x}$ and $\mathbf{y}$ $\in$ $\mathbb{R}^d$, it holds
\begin{multline*}
\triangledown f^{\epsilon}(\mathbf{x}-\mathbf{y})\cdot \triangledown \left(U(\mathbf{x})-U(\mathbf{y})\right)=\frac{ \mathds{1}(\|\mathbf{x}-\mathbf{y}\|>\epsilon)}{\|\mathbf{x}-\mathbf{y}\|}(\mathbf{x}-\mathbf{y})\cdot \triangledown \left(U(\mathbf{x})-U(\mathbf{y})\right)\\
+ \frac{ \mathds{1}(\|\mathbf{x}-\mathbf{y}\|\leq \epsilon)}{\epsilon}(\mathbf{x}-\mathbf{y})\cdot \triangledown \left(U(\mathbf{x})-U(\mathbf{y})\right) \geq 0.
\end{multline*}
Therefore, the term $u_{\epsilon}[t_0,t_1]$ is easily bounded, since it is always non-positive, i.e.
$$
\liminf_{\epsilon\downarrow 0}u_{\epsilon}[t_0,t_1]\leq 0.$$
For the term $b_{\epsilon}[t_0,t_1]$, we use the global Lipschitz condition on the function $b$, together with the properties of $W_1$ Wasserstein distance and inequality~\eqref{differenza}:
\begin{multline*}
	b_{\epsilon}[t_0,t_1]\leq   \Mean\left[ \int_{t_0}^{t_1}\left\|b(X^1(s),P^1_s)-b(X^2(s),P^2_s)\right\|ds  \right]\\
	\leq L \left( \int_{t_0}^{t_1} \Mean\left[\sup_{s\in[0,t]}\|X^1(s)-X^2(s)\|\right]dt+(t_1-t_0)\rho_{[t_0,t_1]}(P^1,P^2)\right).
\end{multline*}
For estimating the term $\sigma_{\epsilon}[t_0,t_1]$, we observe that the Hessian matrix of $f^{\epsilon}$ has the following form:
\[
H_{f^{\epsilon}(x)}=\mathds{1}(\|x\|>\epsilon)\left(-\frac{1}{\|x\|^3}A+\frac{1}{\|x\|}I\right)+\mathds{1}(\|x\|\leq\epsilon)\frac{1}{\epsilon}I,
\]
where $A$ is $d\times d$ matrix such that, for all $i,j$, $A_{i,j}=x_ix_j$ and $I$ is the identity $d\times d$ matrix. Therefore, 
{\small\begin{multline*}
\sigma_{\epsilon}[t_0,t_1] \leq \\ \leq\frac{1}{2}\int_{t_0}^{t_0}\Mean\left[\frac{ \mathds{1}(\|X^1(s)-X^2(s)\|>\epsilon)}{\|X^1(s)-X^2(s)\|}Tr\left(\sigma(X^1(s),P_s^1)-\sigma(X^2(s),P_s^2) \right)^T\left(\sigma(X^1(s),P_s^1)-\sigma(X^2(s),P_s^2) \right)  \right]ds \\
+\frac{1}{2}\int_{t_0}^{t_0}\Mean\left[\frac{\mathds{1}(\|X^1(s)-X^2(s)\|\leq\epsilon)}{\epsilon}Tr\left(\sigma(X^1(s),P_s^1)-\sigma(X^2(s),P_s^2) \right)^T\left(\sigma(X^1(s),P_s^1)-\sigma(X^2(s),P_s^2) \right)  \right]ds\\
+\frac{1}{2}\int_{t_0}^{t_0}\Mean\left[\frac{ \mathds{1}(\|X^1(s)-X^2(s)\|>\epsilon)}{\|X^1(s)-X^2(s)\|^3}Tr\left(\sigma(X^1(s),P_s^1)-\sigma(X^2(s),P_s^2) \right)^T\right.\\
\left. \left((X^1(s)-X^2(s))_i(X^1(s)-X^2(s))_j\right)\left(\sigma(X^1(s),P_s^1)-\sigma(X^2(s),P_s^2) \right)  \right]ds.
\end{multline*} }
This term, due to the Lipschitz property of the diffusion coefficient $\sigma$ gives rise to a new term linear in $\Mean[\sup_{t\in[t_01,t_1]}\|X^1(t)-X^2(t)\|]$, since we have, for a certain $K\geq0$,
\[
\sigma_{\epsilon}[t_0,t_1]\leq KL\int_{t_0}^{t_1}\Mean[\sup_{s\in[t_01,t]}\|X^1(s)-X^2(s)\|]dt.
\]

In addition to the previous arguments, the treatment of the term $\Sigma_{\epsilon}[t_0,t_1]$ involves the \\ Burkholder-Davis-Gundy inequalities and the global Lipschitz condition (LD):
\begin{align*}
	\Sigma_{\epsilon}[t_0,t_1]&\leq C_1 \Mean\left[ \left(\int_{t_0}^{t_1} \left\|\left(\sigma(X^1(s),P^1_s) -\sigma(X^2(s),P^2_s)\right)^T\right.\right.\right.\\
&\qquad\qquad \qquad  \qquad\qquad \left.\left.\left.	\left(\sigma(X^1(s),P^1_s) -\sigma(X^2(s),P^2_s)\right) \right\|ds \right)^{1/2} \right] \\
	&\leq C_1 L \Mean\left[ \left( \int_{t_0}^{t_1} \sup_{s\in[t_0,t_1]}\|X^1(s)-X^2(s)\|^2 dt +(t_1-t_0)\rho_{[t_0,t_1]}(P^1,P^2)^2\right)^{1/2} \right] \\
	&\leq C_1L\sqrt{(t_1-t_0)}\left( \Mean\left[\sup_{t\in[t_0,t_1]}\|X^1(t)-X^2(t)\|\right]+\rho_{[t_0,t_1]}(P^1,P^2)\right)
\end{align*}
for some constant $C_1$ not depending on $t_0, t_1$. To bound the term $\Lambda_{[t_0,t_1]}$, we make use the properties of the process $\left\{\Lambda(t)\right\}_{t\in [0,T]}$, of the $W_1$ Wasserstein distance, as well as condition (L1) and monotone convergence theorem.
{\small\begin{align*}
	\liminf_{\epsilon\downarrow0}\Lambda_{\epsilon}[t_0,t_1]&=
\displaystyle{\Mean\left[\sup_{t\in[t_0,t_1]} \int_{t_0}^t\int_{[0,1]}\int_0^{\infty} \left\|X^1(s)+\psi(X^1(s),P^1_s,h)\mathds{1}_{u\leq\lambda(X^1(s),P^1_s)}-X^2(s)\right.\right.}\\
&\qquad \qquad \qquad \qquad \qquad \left.\left.-\psi(X^2(s),P^2_s,h)\mathds{1}_{u\leq\lambda(X^2(s),P^2_s)}\right\|	-\|X^1(s)-X^1(s)\|dsdu \nu_1(dh)\right]	\\
	 &\leq \Mean\left[\int_{t_0}^{t_1} \int_{[0,\infty)\times[0,1]} \|\psi(X^1(s^-),P^1_{s^-},h)\mathds{1}_{(0, \lambda(X^1(s^-),P^1_{s^-})]}\right. \\
	&\qquad \qquad \qquad \qquad \qquad \left. -\psi(X^2(s^-),P^2_{s^-},h)\mathds{1}_{(0, \lambda(X^2(s^-),P^2_{s^-})]}\| dsdu\nu(dh)\right]\\
	&\leq L \left(\int_{t_0}^{t_1} \Mean\left[\sup_{s\in [0,t]}\|X^1(s)-X^2(s)\|\right]dt+(t_1-t_0)\rho_{[t_0,t_1]}(P^1,P^2)\right).
\end{align*}}
Therefore,
{\begin{multline*} 
	\displaystyle{\Mean\left[\sup_{t\in[t_0,t_1]}\|X^1(t)-X^2(t)\|\right] \leq \Mean\left[\|X^1(t_0)-X^2(t_0)\|\right]}\\
	+L\left((K+1)(t_1-t_0)+C_1\sqrt{t_1-t_0}\right)\rho_{[t_0,t_1]}(P^1,P^2)\\
	\displaystyle{\qquad \qquad +  C_1 L\sqrt{(t_1-t_0)} \Mean\left[\sup_{t\in[t_0,t_1]}\|X^1(t)-X^2(t)\|\right]}\\
	\displaystyle{+ L(1+K) (t_1-t_0) \int_{t_0}^{t_1} \Mean\left[\sup_{s\in [0,t]}\|X^1(s)-X^2(s)\|\right]dt.}
\end{multline*}}
By hypothesis, $\Mean\left[\|X(0)-Y(0)\|\right] = 0$, then choose $T_0 > 0$ such that $1- C_1L\sqrt{T_0} > 0$.  
Therefore we have
\begin{multline}\label{IneqGronwall}
\displaystyle{\mathbf{E}\left[\sup_{t\in[0,T_0\wedge T]}\|X^1(t)-X^2(t)\|\right]\leq\frac{L(1+K) T_0}{1- C_1L\sqrt{T_0} }\int_0^{ T_0\wedge T}\mathbf{E}\left[\sup_{s\in [0,t]}\|X^1(s)-X^2(s)\|\right]dt}\\
+\frac{L\left((1+K) T_0+C_1\sqrt{T_0}\right)}{1- C_1L\sqrt{T_0} } \rho_{T_0}(P^1,P^2).
\end{multline}
Applying Gronwall's Lemma to \eqref{IneqGronwall}, there exists a $T_0>0$ sufficiently small such that 
\begin{equation*}
	\rho_{T_0}(Q^1,Q^2)\leq \Mean\left[\sup_{t\in[0,T_0\wedge T]}\|X^1(t)-X^2(t)\|\right] < C_{T_0}\rho_{T_0}(P^1,P^2),
\end{equation*}
for a constant $C_{T_0}<1$. Therefore, when $P^k\doteq Q^k$, this shows uniqueness of the McKean-Vlasov measure in $\mathcal{M}^{1}\left(\mathbf{D}([0,T_0],\mathbb{R}^d)\right)$. However, since $C_{T_0}$ depends only on the amplitude of the interval, the same procedure iterated  over a finite number of intervals of the type $[T_0\wedge T, 2T_0\wedge T]$, $[2T_0\wedge T,3T_0\wedge T]$, etc., yields uniqueness of the measure in $\mathcal{M}^{1}\left(\mathbf{D}([0,T],\mathbb{R}^d)\right)$.\\

The proof of existence is obtained via a Picard iteration argument, starting from \eqref{Mckean-vlasov_non_lipschitz_uniqueness}. Let $P^k\doteq Q^{k-1}$, then \eqref{Mckean-vlasov_non_lipschitz_uniqueness}  gives a sequence of laws $\{Q^k\}_{k\in\mathbb{N}}$, that is a Cauchy sequence for the metric $\rho_{T_0}$ on $\mathcal{M}^{1}\left(\mathbf{D}([0,T_0],\mathbb{R}^d)\right)$. Consequently, it is a Cauchy sequence also for a weaker Wasserstein metric based on a complete Skorohod metric, that yields existence of a solution of \eqref{Mckean-vlasov_non_lipschitz_uniqueness} on $[0,T_0\wedge T]$. Again, iterating the procedure over a finite number of intervals gives the thesis.
\end{proof}

\subsection{Propagation of chaos}

We use again the trick of the sequence of \emph{intermediate processes} $\{Y^N\}_{N\in\mathbb{N}}$, where each process $Y^N=\{Y^N(t)\}_{t\in [0,T]}$ is defined as the solution of the system \eqref{SDE_YN}.
As before,  the \emph{collateral jumps} have been absorbed by a new drift term, that by the properties of the jump rate $\lambda$ and its amplitude $\Theta$ is a globally Lipschitz drift term, that is added to $b$, giving raise to a new drift $\bar F$ that maintains condition (U) of $F$. For the proof of propagation of chaos, we apply again the procedure of Section~\ref{prop_chaos}, starting with the bound over the distance of the two particle systems \eqref{SDE_XN} and \eqref{SDE_YN}.

\begin{theorem}\label{iniziale_intermedio_NONSTANDARD_DRIFT}
Grant  
Assumption~\ref{ASS_COLL_JUMPS} and 
Assumption~\ref{ASS_NL2}. Let $X^N$ and $Y^N$ be the solution of \eqref{SDE_XN} and \eqref{SDE_YN}, respectively. We assume the two processes are driven by the same Brownian motions and Poisson random measures, and start from the same square-integrable and permutation invariant initial condition. Then, for each fixed $i \in \mathbb{N}$,
\begin{equation*} \label{dist-interm_DRIFT_NONSTANDARD}
	\lim_{N \to +\infty} \Mean\left[ \sup_{t\in[0,T]} \|X^N_i(t)-Y^N_i(t)\| \right] = 0.
\end{equation*}
\end{theorem}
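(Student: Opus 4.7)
The plan is to combine the coupling-and-splitting argument from the proof of Proposition~\ref{iniziale_intermedio} with the $L^1$ Ito trick used in the proof of Theorem~\ref{existence_uniqueness_McKeanVlasov_NONSTANDARD_DRIFT}, so as to absorb the non-Lipschitz gradient drift $-\nabla U$. Concretely, for each $N$ and each $i\in\{1,\dots,N\}$ I would apply Ito's formula to $f^{\epsilon}(X^N_i(t)-Y^N_i(t))$, where $f^{\epsilon}$ is the smooth approximation of the norm introduced in the proof of Theorem~\ref{existence_uniqueness_McKeanVlasov_NONSTANDARD_DRIFT}, take $\liminf_{\epsilon\downarrow 0}$ under the supremum via Fatou's lemma, and then bound each of the resulting terms.

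The decomposition produces analogues of the six pieces $i_{\epsilon}$, $u_{\epsilon}$, $b_{\epsilon}$, $\sigma_{\epsilon}$, $\Sigma_{\epsilon}$, $\Lambda_{\epsilon}$ already treated there, plus a collateral-jump contribution $\Theta_{\epsilon}$ coming from the $\frac{1}{N}\sum_{j\ne i}$ sum in \eqref{SDE_XN}. The initial-datum piece $i_{\epsilon}$ vanishes because $X^N_i(0)=Y^N_i(0)$ by hypothesis, while the convex-potential piece $u_{\epsilon}$ is non-positive since $\nabla f^{\epsilon}(x-y)\cdot(\nabla U(x)-\nabla U(y))\ge 0$, so $\liminf_{\epsilon\downarrow 0} u_{\epsilon}\le 0$. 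The remaining terms are estimated using the Lipschitz conditions on $b$, $\sigma$ and on the jump coefficients, combined with the coupling bound $\rho(\mu^N_X(s),\mu^N_Y(s))\le \frac{1}{N}\sum_{j=1}^N\|X^N_j(s)-Y^N_j(s)\|$.

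The only genuinely new piece is $\Theta_{\epsilon}$, which I would treat exactly as in Proposition~\ref{iniziale_intermedio}: using that $f^{\epsilon}$ is $1$-Lipschitz, I would compensate the Poisson random measures $\mathcal{N}^j$ and exploit the orthogonality of the martingales $\tilde{\mathcal{N}}^j$ to split $\Theta_{\epsilon}$ into a Burkholder-Davis-Gundy piece of order $N^{-1/2}$ (using assumption (I2)), a drift-correction piece controlled by (L1) producing the standard Gronwall terms, and a diagonal $j=i$ piece of order $N^{-1}$. The Hessian term $\sigma_{\epsilon}$, via the explicit form of $H_{f^{\epsilon}}$ and (LD), gives a bound of the form $KL\int_0^t \Mean[\sup_{s\le r}\|X^N_i(s)-Y^N_i(s)\|]dr$, while the stochastic integral $\Sigma_{\epsilon}$, via BDG, produces the usual $C_1L\sqrt{t}\,\Mean[\sup_{s\le t}\|X^N_i(s)-Y^N_i(s)\|]$ contribution plus a $W_1$-correction; the main-jump piece $\Lambda_{\epsilon}$ is bounded directly through (L1) without compensation, as in Proposition~\ref{iniziale_intermedio}.

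Collecting the bounds, summing over $i$, dividing by $N$, and using exchangeability of both initial datum and dynamics, I obtain on a short interval $[0,T_0]$ with $T_0$ small enough that $1-C_1L\sqrt{T_0}>0$ a Gronwall-type inequality with an inhomogeneous term of order $N^{-1/2}$. Gronwall's lemma together with the finite-interval iteration argument of Proposition~\ref{iniziale_intermedio} then extends the estimate to $[0,T]$ and in fact yields the stronger quantitative bound $\Mean[\sup_{t\in[0,T]}\|X^N_i(t)-Y^N_i(t)\|]\le C_T/\sqrt{N}$, which implies the stated convergence to zero. The main obstacle is the Ito-plus-Fatou step: one must justify the $L^1$ Ito decomposition for the jump-diffusion applied to the non-smooth $f^{\epsilon}$, control the interchange of $\liminf$ and supremum, and simultaneously kill $u_{\epsilon}$ via convexity while preserving the sharp $N^{-1/2}$ rate from the collateral-jump martingale — a balance that explains why both (I2) and (L1) are needed here and why a naive $L^2$ approach would fail.
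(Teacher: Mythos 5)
Your proposal follows the same route as the paper: apply Ito's formula to the smooth approximation $f^{\epsilon}$ of the norm and take $\liminf_{\epsilon\downarrow 0}$ via Fatou to absorb the convex-potential drift, while treating the collateral-jump term exactly as in Proposition~\ref{iniziale_intermedio} by compensation, orthogonality of the $\tilde{\mathcal{N}}^j$, and BDG, then close with Gronwall on short intervals and iterate. This is precisely the combination of Theorem~\ref{existence_uniqueness_McKeanVlasov_NONSTANDARD_DRIFT} and Proposition~\ref{iniziale_intermedio} that the paper's (very terse) proof invokes, and you even recover the same quantitative bound $C_T/\sqrt{N}$.
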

\begin{proof}
By the permutation invariance of the systems we write
{\small\begin{equation*} \begin{split}
\Mean\left[ \sup_{t\in[0,T]} \|X^N_i(t)-Y^N_i(t)\| \right]  & = \frac{1}{N}\sum_{i=1}^{N}\Mean\left[\sup_{t\in[0,T]}\|X^N_i(t)-Y^N_i(t)\|\right] \\ & =\liminf_{\epsilon\downarrow0}\frac{1}{N}\sum_{i=1}^{N}\Mean\left[\sup_{t\in[0,T]}f^{\epsilon}(X^N_i(t)-Y^N_i(t))\right],
\end{split}
\end{equation*}}
where $f^{\epsilon}$ is the smooth approximation of the norm, defined in Theorem~\ref{existence_uniqueness_McKeanVlasov_NONSTANDARD_DRIFT}. Then, we use the techniques of Theorem~\ref{existence_uniqueness_McKeanVlasov_NONSTANDARD_DRIFT}, as the use of Ito's rule with the function $f^{\epsilon}$. This, together with the computations in Proposition~\ref{iniziale_intermedio} and the usual application of Gronwall Lemma iteratively over a finite number of intervals of the type $[0,T_0\wedge T],\,[T_0,2T_0\wedge T]$, etc. yields, for some constant $C_T>0$,
$$
\frac{1}{N}\sum_{i=1}^{N}\Mean\left[\sup_{t\in[0,T]}\|X^N_i(t)-Y^N_i(t)\|\right]\leq \frac{C_T}{\sqrt{N}},$$
that gives the thesis.
\end{proof}

\begin{proposition}\label{intermedio_limite_NONSTANDARD_DRIFT}
Grant Assumption~\ref{ASS_COLL_JUMPS} and Assumption~\ref{ASS_NL2}. Let $\mu_0$ be a probability measure on $\R^d$ such that $\int \|x\|^2 \mu_0(dx)<+\infty$. For $N\in \mathbb{N}$, let $Y^N$ be a solution of Eq.~\eqref{SDE_YN} in $[0,T]$. Assume that $Y^N(0) = (Y^N_1(0), \ldots, Y^N_N(0))$, $N\in \mathbb{N}$, form a sequence of  square-integrable random vectors that is $\mu_0$-chaotic in $W_1$. Let $Q$ be the law of the solution of Eq.~\eqref{SDE_limite} in $[0,T]$ with initial law $\Prb \circ X(0)^{-1} = \mu_0$. Then $Y^N$ is $Q$ chaotic in $W_1$.
\end{proposition}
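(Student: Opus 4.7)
The plan is to mirror Proposition~\ref{intermedio_limite}, replacing the direct Lipschitz estimate on $F$ with the It\^o/convexity argument developed in Theorem~\ref{existence_uniqueness_McKeanVlasov_NONSTANDARD_DRIFT}. On the same stochastic basis carrying $(B_i,\mathcal{N}^i)_{i\in\mathbb{N}}$, introduce a triangular array $(\bar X^N_i(0))_{i\le N, N\in\mathbb{N}}$ of i.i.d.\ $\mu_0$-distributed random variables, independent of the driving noise, such that $\phi^N\doteq \Mean[\|\bar X^N_i(0)-Y^N_i(0)\|]\to 0$; this is possible because $Y^N(0)$ is $\mu_0$-chaotic in $W_1$. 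For each $N,i$ let $\bar X^N_i$ be the unique strong solution of the McKean-Vlasov equation \eqref{Mckean-vlasov_non_lipschitz} with initial condition $\bar X^N_i(0)$ driven by $(B_i,\mathcal{N}^i)$; by Theorem~\ref{existence_uniqueness_McKeanVlasov_NONSTANDARD_DRIFT} the $\bar X^N_i$ are i.i.d.\ with common law $Q$. As in Proposition~\ref{intermedio_limite}, $Q$-chaoticity in $W_1$ of $Y^N$ will follow once we show that $\Mean[\sup_{t\in[0,T]}\|\bar X^N_i(t)-Y^N_i(t)\|]\to 0$.

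The key step is to apply It\^o's rule to $f^{\epsilon}(\bar X^N_i(t)-Y^N_i(t))$, with the smooth approximation $f^{\epsilon}$ of the norm introduced in Theorem~\ref{existence_uniqueness_McKeanVlasov_NONSTANDARD_DRIFT}, and then pass to $\liminf_{\epsilon\downarrow 0}$ using Fatou's lemma. The $-\nabla U$ contribution, by the convexity of $U$ together with the fact that $\nabla f^{\epsilon}(z)\cdot z\ge 0$ in each branch of its piecewise definition, yields a non-positive term that can be discarded. The $b$-part is globally Lipschitz in both arguments (assumption (U)), and since $\|\nabla f^{\epsilon}\|\le 1$ it contributes a term of the form $L\int_{0}^{t}\Mean[\sup_{s\le r}\|\bar X^N_i(s)-Y^N_i(s)\|]\,dr+L\int_{0}^{t}\Mean[\rho(\mu^N_Y(s),\mu_s)]\,ds$. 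The bounded-variation quadratic part of the diffusion is controlled as in Theorem~\ref{existence_uniqueness_McKeanVlasov_NONSTANDARD_DRIFT} thanks to (LD), producing an extra Lipschitz-type term. The stochastic integral is handled by BDG, giving a term with coefficient $C_1L\sqrt{t}$. Finally the collateral-jump drift absorbed into $Y^N$ is globally Lipschitz by (L1) and is handled exactly as $b$, while the main jump integral against $\mathcal{N}^i$ is bounded through (L1) as in Proposition~\ref{intermedio_limite}.

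Choosing $T_0>0$ small enough so that $1-C_1 L\sqrt{T_0}>0$, rearranging, and using the coupling inequality
\begin{equation*}
\rho(\mu^N_Y(t),\mu_t)\le \rho(\mu^N_Y(t),\mu^N_{\bar X}(t))+\rho(\mu^N_{\bar X}(t),\mu_t)\le \frac{1}{N}\sum_{j=1}^{N}\|Y^N_j(t)-\bar X^N_j(t)\|+\rho(\mu^N_{\bar X}(t),\mu_t),
\end{equation*}
a first application of Gronwall's lemma yields
\begin{equation*}
\Mean\!\left[\sup_{t\in[0,T_0]}\|\bar X^N_i(t)-Y^N_i(t)\|\right]\le C_{T_0}\bigl(\phi^N+\beta^N\bigr),
\end{equation*}
where, by square-integrability of the limit process and Theorem~1 of \cite{FoGu15}, $\beta^N\doteq \sup_{t\in[0,T]}\Mean[\rho(\mu^N_{\bar X}(t),\mu_t)]\to 0$.

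The interval $[0,T]$ is then covered by finitely many steps $[kT_0\wedge T,(k+1)T_0\wedge T]$, with the bound at time $kT_0$ serving as the initial error for the next iteration, as in Proposition~\ref{iniziale_intermedio}; this gives $\Mean[\sup_{t\in[0,T]}\|\bar X^N_i(t)-Y^N_i(t)\|]\le C_T(\phi^N+\beta^N)\to 0$, and $Q$-chaoticity in $W_1$ of $Y^N$ follows. The only genuine difficulty beyond Proposition~\ref{intermedio_limite} is the rigorous It\^o expansion in the presence of jumps and of the non-Lipschitz gradient $-\nabla U$; this is exactly what is settled in Theorem~\ref{existence_uniqueness_McKeanVlasov_NONSTANDARD_DRIFT}, and we reuse it verbatim.
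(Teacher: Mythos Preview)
Your proposal is correct and follows essentially the same approach as the paper: set up the coupling with i.i.d.\ copies $\bar X^N_i$ of the McKean-Vlasov solution as in Proposition~\ref{intermedio_limite}, then replace the direct Lipschitz drift estimate by the It\^o-with-$f^{\epsilon}$ argument of Theorem~\ref{existence_uniqueness_McKeanVlasov_NONSTANDARD_DRIFT}, and iterate over small time intervals. The paper's own proof is only a brief sketch indicating exactly this combination of ingredients; your write-up simply supplies the details the paper omits.
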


\begin{proof}  We follow the steps of Proposition~\ref{intermedio_limite} to define the coupling procedure.  We fix a filtered probability space $(\Omega, \mathcal{F}, (\mathcal{F}_t)_{t\geq0},\Prb)$ with respect to the family $(B_i,\mathcal{N}^i)_{i\in\mathbb{N}}$ of independent  Brownian motions and Poisson random measures. For each $N \in \mathbb{N}$, we couple the process $Y^N$ with the process $\bar X^N=\left\{\bar X^N_i(t), \, i=1,\dots,N\right\}_{t\in[0,T]}$ defined thanks to Theorem~\ref{existence_uniqueness_McKeanVlasov_NONSTANDARD_DRIFT},  where the initial condition is $Law(\bar X^N(0))=\otimes^N \mu_{0}$ and each component $\bar X^N_i$ is a solution of \mbox{SDE} \eqref{Mckean-vlasov_non_lipschitz}. Successively, we use the techniques of the previous theorems, we iterate the computations over a finite number of time intervals to cover all $[0,T]$ and we obtain
\begin{equation*}
\displaystyle{\frac{1}{N}\sum_{i=1}^N\Mean\left[\sup_{t\in[0,T]}\|Y^N_i(t)-\bar X^N_i(t)\|\right]\stackrel{N\rightarrow\infty}{\rightarrow}0
},
\end{equation*}
that implies $Q$ chaoticity of the law of $Y^N$.
\end{proof}
\begin{corollary}
Grant Assumption~\ref{ASS_COLL_JUMPS} and Assumption~\ref{ASS_NL2}. Let $\mu_0$ be a probability measure on $\R^d$ such that $\int \|x\|^2 \mu_0(dx)<+\infty$. For $N\in \mathbb{N}$, let $X^N$ be a solution of Eq.~\eqref{SDE_XN} in $[0,T]$. Assume that $X^N(0) = (X^N_1(0), \ldots, X^N_N(0))$, $N\in \mathbb{N}$, form a sequence of  square-integrable random vectors that is $\mu_0$-chaotic in $W_1$. Let $Q$ be the law of the solution of Eq.~\eqref{SDE_limite} in $[0,T]$ with initial law $\Prb \circ X(0)^{-1} = \mu_0$. Then $X^N$ is $Q$ chaotic in $W_1$.
\end{corollary}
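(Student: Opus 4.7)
The proof will be a direct concatenation of Theorem~\ref{iniziale_intermedio_NONSTANDARD_DRIFT} with Proposition~\ref{intermedio_limite_NONSTANDARD_DRIFT}, combined through the triangle inequality for the path-space Wasserstein metric $\rho_T$. The plan is to realize the three families of processes on a common filtered probability space and argue that the $k$-marginal law of $X^N$ converges to $Q^{\otimes k}$ for every fixed $k$.

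First I would set up the coupling. On the stochastic basis $(\Omega,\mathcal{F},(\mathcal{F}_t),\Prb)$ already carrying the family $(B_i,\mathcal{N}^i)_{i\in\mathbb{N}}$, I would define jointly three processes driven by the \emph{same} Brownian motions and Poisson random measures: the original particle system $X^N$ solving \eqref{SDE_XN}, the intermediate system $Y^N$ solving \eqref{SDE_YN}, and, as in the proof of Proposition~\ref{intermedio_limite_NONSTANDARD_DRIFT}, a triangular array of i.i.d.\ copies $\bar X^N_i$ of the nonlinear process \eqref{Mckean-vlasov_non_lipschitz} with initial law $\mu_0$, so that $(\bar X^N_1,\dots,\bar X^N_N)$ has law $Q^{\otimes N}$. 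One has to be slightly careful with initial conditions: since the $\mu_0$-chaoticity in $W_1$ of $X^N(0)$ is assumed, and since $Y^N(0)=X^N(0)$, the initial conditions of $Y^N$ also form a $\mu_0$-chaotic sequence in $W_1$, so the hypotheses of Proposition~\ref{intermedio_limite_NONSTANDARD_DRIFT} apply. The $\bar X^N_i(0)$ are drawn i.i.d.\ from $\mu_0$ with $\Mean[\|\bar X^N_i(0)-X^N_i(0)\|]\to 0$, as guaranteed by the $\mu_0$-chaoticity hypothesis.

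Next, fix $k\in\mathbb{N}$. By the dual definition of the Wasserstein distance, the coupling $((X^N_1,\dots,X^N_k),(\bar X^N_1,\dots,\bar X^N_k))$ gives
\begin{equation*}
\rho_T\!\left(\Law(X^N_1[0,T],\dots,X^N_k[0,T]),\, Q^{\otimes k}\right)\;\leq\;\sum_{i=1}^{k}\Mean\!\left[\sup_{t\in[0,T]}\|X^N_i(t)-\bar X^N_i(t)\|\right].
\end{equation*}
A triangle inequality in $\R^d$ bounds each summand by
\begin{equation*}
\Mean\!\left[\sup_{t\in[0,T]}\|X^N_i(t)-Y^N_i(t)\|\right]+\Mean\!\left[\sup_{t\in[0,T]}\|Y^N_i(t)-\bar X^N_i(t)\|\right].
\end{equation*}
The first term tends to $0$ as $N\to\infty$ by Theorem~\ref{iniziale_intermedio_NONSTANDARD_DRIFT}, and the second term tends to $0$ by the estimate established in the proof of Proposition~\ref{intermedio_limite_NONSTANDARD_DRIFT}. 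Hence the right-hand side vanishes, and since $k$ was arbitrary, $X^N$ is $Q$-chaotic in $W_1$, with permutation invariance of $X^N$ inherited from that of $X^N(0)$ and of the dynamics.

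There is no substantial obstacle here: both non-trivial ingredients are already proved, the chaoticity of the initial data transfers automatically to $Y^N(0)$, and the argument is just a triangle inequality for $\rho_T$ on the product space. The only point worth mentioning is that one needs $\bar X^N_i$ coupled with the \emph{same} noise as $Y^N_i$ (otherwise Proposition~\ref{intermedio_limite_NONSTANDARD_DRIFT} cannot be applied to the second summand), which is exactly how the coupling was set up in the proof of that proposition and is compatible with the coupling used to obtain Theorem~\ref{iniziale_intermedio_NONSTANDARD_DRIFT}.
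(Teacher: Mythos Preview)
Your proposal is correct and is precisely the argument the paper intends: the corollary is stated without proof because it is the immediate concatenation of Theorem~\ref{iniziale_intermedio_NONSTANDARD_DRIFT} and Proposition~\ref{intermedio_limite_NONSTANDARD_DRIFT} via the triangle inequality, exactly paralleling how Corollary~\ref{propch_limite} follows from Propositions~\ref{iniziale_intermedio} and~\ref{intermedio_limite}. Your care in noting that $Y^N(0)=X^N(0)$ inherits $\mu_0$-chaoticity, and that the common-noise coupling is compatible across the two steps, makes the argument complete.
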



\section{Non-globally Lipschitz jump rate}\label{non_lip_rate}
Stochastic models in neuroscience often focus on the membrane potential of neurons and describe its spikes in terms of \mbox{SDEs} with jumps. The jump rates in those models are usually super-linear. It is therefore interesting to investigate systems where the jump coefficients are not required to be globally Lipschitz. We start by adapting the model presented in \cite{RoTo14} to a $d$-dimensional framework and a slightly more general situation allowing for jumps with random amplitude.  In order to get a tractable model with a super-linear jump rate, we are forced to make more restrictive assumptions on the other parts of the dynamics than in the previous sections. We consider a model where particles are subject to a linear attracting drift, we drop the diffusion part, and we assume the main jump to force the particles into a given compact set. Furthermore, the collateral jumps are of bounded random amplitude and do not depend on the positions of the affected jumping particle.

\subsection{Particle system}
We consider the Markov process $X^N$ solution of the following \mbox{SDE}, similar to Eq.~\eqref{SDE_XN}, {\small
\begin{multline}\label{SDE_XN_RATE_NONSTANDARD}
dX_i^N(t)=\displaystyle{-X_i^N(t)dt+\frac{1}{N}\sum_{j\neq i}\int_{[0,\infty]\times [0,1]^{\mathbb{N}}}V(h_j,h_i)\mathds{1}_{[0,\lambda(X^N_j(t)))}(u)\mathcal{N}^j(dt,du,dh)}\\
\displaystyle{\quad-\int_{[0,\infty)\times [0,1]^{\mathbb{N}}}\left(X_i^N(t)-U(h_i)\right)\mathds{1}_{[0,\lambda(X^N_i(t)))}(u)\mathcal{N}^i(dt,du,dh)}
\end{multline}} for all $i=1,\dots,N$. 
 As before, $(\mathcal{N}^i)_{i\in\mathbb{N}}$ is an independent family of  Poisson random measures $\mathcal{N}^i$, each of them with characteristic measure $l\times l\times\nu$. $l$ is the Lebesgue measure restricted to $[0,\infty)$ and $\nu$ is a symmetric probability measure on $[0,1]^{\mathbb{N}}$  such that it exists $\left(\nu_N\right)_{N\in\mathbb{N}}$, a consistent family of symmetric probability measures, each of them defined respectively on $[0,1]^N$ and coinciding with the projections of $\nu$ on $N$ coordinates. 
 {\begin{assumpt}\label{ass}
 The coefficients of the system \eqref{SDE_XN_RATE_NONSTANDARD} obey the following properties:
\begin{itemize}
\item the jump rate of each particle is a non-negative $C^1$ function of its position, $\lambda:\mathbb{R}^d\rightarrow \mathbb{R}_+ $, that is written as a sum of two functions: 
{ $$\lambda(\cdot)\doteq b(\|\cdot\|)+h(\cdot).$$ 
\begin{itemize}
\item[-] $b$ is a $C^1$, positive, non-decreasing function such that 
\begin{equation}\label{condizione_su_b}
b'(r)\leq\gamma b(r)+c
\end{equation}
for some $c>0$ and   $\displaystyle{\gamma<\frac{1}{5\Mean[\|V\|]}}$;
\item[-] $h:\mathbb{R}^d\rightarrow\mathbb{R}$ is a $C^1$ bounded function, i.e. there exists $H>0$ such that $\forall$ $x\in \mathbb{R}^d$, $\|h(x)\|\leq H$;
\end{itemize}}
\item the jump amplitudes, $V$ and $U$, are two bounded functions from respectively $[0,1]^2$ and $[0,1]$ to $\mathbb{R}^d$ (since they represents two random variables with values in some bounded subsets of $\mathbb{R}^d$, with abuse of notation we will indicate as expectations their integrals w.r.t. the measure $\nu$ ). 
\end{itemize}
The assumption $\displaystyle{\gamma<\frac{1}{5\Mean[\|V\|]}}$ will allow to obtain apriori bounds on the moments of $\lambda(X(t))$ where $X(t)$ is te solution of the corresponding McKean-Vlasov equation, see \eqref{SDE_LIMITE_NONSTANDARD_RATE}, and it is used in the proofs of next Lemmas ~\ref{bound_per_<mu,f^5>} and~\ref{bound_jumps}, which are postponed to the appendix.

\end{assumpt}}
\begin{remark} The model and the form of the function $b$ is suggested by \cite{RoTo14}. It is interesting to notice that Assumption \ref{ass} allows to consider non-globally Lipschitz functions; in particular, this covers all the cases where $b(r)$ is of the form $r^{\alpha}$, for $\alpha\geq 1$. 
We also remark that the condition on $b$ here is a little stronger than in \cite{RoTo14}, due to the coupling method (vs.  the martingale approach) in the proof, which in particular allows to identify the rate of convergence,  which is of the order $O\left(\frac{1}{\sqrt{N}}\right)$. This requires $\displaystyle{\gamma<\frac{1}{K\Mean[\|V\|]}}$ with $K=5$ rather than $K=3$, as in \cite{RoTo14}.
\end{remark}
\begin{remark}The neuronal model presented in \cite{DeGaLoPr14} and \cite{FoLo14} is similar to this one, but it has a drift toward the barycenter of the system, instead of the origin. When dealing with initial conditions with bounded support, we could adapt our computations to that case, except for the fact that the function $b$ has to be convex. In \cite{FoLo14}, the authors succeed in proving  propagation of chaos with an explicit rate (namely, the expected $\frac{1}{\sqrt{N}}$) even for weaker conditions on the initial values, by defining an ad-hoc distance based on the rate function $\lambda$ itself. 
\end{remark}
Existence and uniqueness of a non-explosive solution to the system with $N$ fixed relies on a truncation argument
on the function $\lambda$, see Appendix~\ref{app_B}.

\subsection{McKean-Vlasov equation}\label{nonlinear_SDE_e!}
This section is devoted to analyze the McKean-Vlasov equation whose law is the limit of the sequence of empirical measures corresponding to system \eqref{SDE_XN_RATE_NONSTANDARD}, that is{\small
\begin{equation}\label{SDE_LIMITE_NONSTANDARD_RATE}
dX(t)=\Mean\left[\lambda(X(t))\right]\Mean\left[V\right]dt-X(t)dt-\int_{[0,\infty)\times [0,1]^{\mathbb{N}}}\left(X(t)-U(h_1)\right)\mathds{1}_{[0,\lambda(X(t)))}(u)\mathcal{N}(dt,du,dh),
\end{equation}}
with $\mathcal{N}$ Poisson random measure with characteristic measure $l\times\nu \times l$. {Since the model that we treat is basically an extension in $d$-dimension of the model presented in \cite{RoTo14}, techniques for proving existence and uniqueness of solutions for the nonlinear Markov process \eqref{SDE_LIMITE_NONSTANDARD_RATE} are adaptations of the techniques presented in that paper. The procedure relies on a priori bounds on moments of the solution and of the expectation of $\lambda(X(t))$, we will present the main steps here, while we gather the details in Appendix~\ref{app_B}.
}

\begin{lemma}\label{Lemma_Z}
Let $f:\mathbb{R}_+\rightarrow\mathbb{R}^d$ be a locally bounded Borel function, then there exists a unique solution $\left(Z_f(t)\right)$ to the \mbox{SDE}
\begin{equation}\label{SDE_con_funct}
dZ_f(t)=-Z_f(t)dt+f(t)dt-\int_{[0,\infty)\times [0,1]^{\mathbb{N}}}\left(Z_f(t)-U(h_1)\right)\mathds{1}_{[0,\lambda(Z_f(t)))}(u)\mathcal{N}(dt,du,dh)
\end{equation}
with initial condition $x$ and coefficients satisfying Assumption~\ref{ass}. Moreover, for every pair of locally bounded Borel functions $f$ and $g$, for every $T>0$ there exists a constant $C_T>0$ such that 
\begin{equation}\label{bound_Z_u}
\Mean\left[\sup_{t\in[0,T]}\left\|Z_f(t)-Z_g(t)\right\|\right]\leq C_T\int_0^T\sup_{s\in[0,t]}\left\|f(s)-g(s)\right\|dt.
\end{equation}
\end{lemma}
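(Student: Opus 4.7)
The plan is to first construct $Z_f$ pathwise by exploiting the contractive drift together with the resetting nature of the jumps, and then to derive the comparison inequality \eqref{bound_Z_u} by coupling two solutions driven by the same Poisson random measure and applying It\^o's rule to the smooth approximation of $\|\cdot\|$ already used in Theorem~\ref{existence_uniqueness_McKeanVlasov_NONSTANDARD_DRIFT}. The key preliminary observation is that the deterministic flow $\dot{z}=-z+f(t)$ is contractive and the reset prescription $z\mapsto U(h_1)$ is bounded, so any solution $Z_f$ is confined to the deterministic ball $\|Z_f(t)\|\le \|x\|+\sup_{s\le T}\|f(s)\|+\sup_{h_1\in[0,1]}\|U(h_1)\|=:R$ for every $t\in[0,T]$. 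Since $\lambda$ is $C^1$, the rate $\lambda(Z_f(t))$ is then bounded by $\Lambda_R:=\sup_{\|y\|\le R}\lambda(y)<\infty$. Existence follows by truncation: replacing $\lambda$ by $\lambda_M:=\lambda\wedge M$ yields globally bounded, globally Lipschitz jump coefficients for which Theorem~1.2 in \cite{Gra92} provides a unique strong solution $Z_f^M$; for $M>\Lambda_R$ the a priori bound above forces $\lambda_M(Z_f^M)=\lambda(Z_f^M)$ on $[0,T]$, so $Z_f^M$ already solves \eqref{SDE_con_funct}. Uniqueness will be a by-product of the stability estimate applied with $f=g$.

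To prove \eqref{bound_Z_u} I couple $Z_f$ and $Z_g$ on the same Poisson random measure $\mathcal{N}$ and apply It\^o's rule to $\varphi^\epsilon(Z_f(t)-Z_g(t))$, with $\varphi^\epsilon$ the $\epsilon$-smooth approximation of $\|\cdot\|$ constructed in the proof of Theorem~\ref{existence_uniqueness_McKeanVlasov_NONSTANDARD_DRIFT}, and then let $\epsilon\downarrow 0$ via Fatou. The contractive drift $-(Z_f-Z_g)$ contributes a non-positive term by the same sign argument invoked there for the convex potential, whereas the forcing difference $f-g$ is bounded by $\int_0^t\|f(s)-g(s)\|\,ds$ because $\|\nabla\varphi^\epsilon\|\le 1$. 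For the Poisson integral I split the $u$-integration into the slab $\{u\le\lambda(Z_f)\wedge\lambda(Z_g)\}$, on which both processes simultaneously reset to the common value $U(h_1)$, so the difference drops to zero and $\varphi^\epsilon$ strictly decreases (a non-positive contribution), and the slab $\{\lambda(Z_f)\wedge\lambda(Z_g)<u\le\lambda(Z_f)\vee\lambda(Z_g)\}$ of Lebesgue length $|\lambda(Z_f)-\lambda(Z_g)|$, on which only one process jumps and the $\varphi^\epsilon$-increment is controlled by $\|U(h_1)-Z_f\|\vee\|U(h_1)-Z_g\|\le 2R$. Combined with the local Lipschitz constant $L_R$ of $\lambda$ on $\{\|y\|\le R\}$, the asymmetric-jump term is bounded by $2R\,L_R\,\|Z_f-Z_g\|$ per unit of time.

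Putting everything together, passing to $\sup_{s\le\tau}$ and working throughout in the $L^1$ framework (so the non-compensated Poisson integral is used directly, as emphasised in the introductory remark on $\int\|\Delta^N\|\,du\,\nu(dh)$), one obtains
$$\Mean\Bigl[\sup_{s\le\tau}\|Z_f(s)-Z_g(s)\|\Bigr]\le C_R\int_0^\tau\Mean\Bigl[\sup_{r\le s}\|Z_f(r)-Z_g(r)\|\Bigr]\,ds+\int_0^\tau\sup_{r\le s}\|f(r)-g(r)\|\,ds,$$
and Gronwall's lemma delivers \eqref{bound_Z_u}. The main obstacle is precisely the asymmetric-jump term: without the a priori confinement established in the first step there is no Lipschitz control of $|\lambda(Z_f)-\lambda(Z_g)|$, and the super-linear growth of $\lambda$ permitted by Assumption~\ref{ass} would immediately break the Gronwall loop. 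Securing the deterministic bound on $\|Z_f\|,\|Z_g\|$ \emph{before} closing the coupling estimate is therefore the crux of the argument.
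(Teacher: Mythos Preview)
Your argument is correct and follows the same overall scheme as the paper: first secure a deterministic a~priori bound on $\|Z_f\|$ (so that $\lambda$ is effectively bounded and locally Lipschitz along the trajectory), use this to settle existence by truncation, and then close a Gronwall loop on $\Mean[\sup_{s\le t}\|Z_f(s)-Z_g(s)\|]$. The paper obtains the same a~priori confinement (written as $\|Z_f(t)\|\le K_0+\int_0^t\|f(s)\|\,ds$ rather than your sum bound $R$, but either suffices) and identifies it, as you do, as the crucial step that makes the local Lipschitz constant of $\lambda$ available.

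Where your proof differs is in the mechanics of the stability estimate. You apply It\^o's rule to $\varphi^\epsilon(Z_f-Z_g)$ and exploit two sign observations: the contractive drift $-(Z_f-Z_g)$ contributes a non-positive term, and on the common slab $\{u\le\lambda(Z_f)\wedge\lambda(Z_g)\}$ both processes reset to the same point so $\varphi^\epsilon$ strictly decreases. The paper instead works directly with the integral form of the SDE and the triangle inequality, bounding $\|{-}(Z_f-Z_g)\|$ crudely by $\|Z_f-Z_g\|$ and treating the whole jump integrand $\|(Z_f-U)\mathds{1}_{[0,\lambda(Z_f))}-(Z_g-U)\mathds{1}_{[0,\lambda(Z_g))}\|$ at once, without isolating the common-reset slab. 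Your route yields sharper constants (the $-z$ drift and the simultaneous reset both disappear from the Gronwall coefficient) at the price of importing the $\varphi^\epsilon$ machinery from Theorem~\ref{existence_uniqueness_McKeanVlasov_NONSTANDARD_DRIFT}; the paper's route is more elementary and avoids It\^o's formula altogether for this lemma. Both lead to the same Gronwall structure and to \eqref{bound_Z_u}. One small remark: the truncated equation carries a time-dependent forcing $f(t)$ rather than a mean-field dependence, so the reference to \cite[Theorem~1.2]{Gra92} is not literally applicable; any standard strong existence result for jump SDEs with bounded rate (e.g.\ \cite{IkWa14}) is what you actually need there.
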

A priori bounds for any solution of \eqref{SDE_LIMITE_NONSTANDARD_RATE} are necessary to perform the iteration that yields to the existence and uniqueness of the nonlinear process itself. The following lemma provides the required bounds.

\begin{lemma}\label{bound_a_priori} Suppose Assumption~\ref{ass} is satisfied.
Let $X$ be a solution of \eqref{SDE_LIMITE_NONSTANDARD_RATE} with integrable initial condition $X(0)$; then 
we have that $\sup_{t\geq0}\Mean\left[\|X(t)\|\right]<\infty$. Moreover,  for $p=1,2, 3, 4$, if $\Mean\left[\lambda^p(X(0))\right]<\infty$ then $\sup_{t\geq0}\Mean\left[\lambda^p(X(t))\right]\leq C<\infty$, where $C$ only depends on $\Mean\left[\lambda^p(X(0))\right]$ and on the parameters of equations \eqref{SDE_LIMITE_NONSTANDARD_RATE}.
\end{lemma}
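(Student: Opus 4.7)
The plan is to derive a closed differential inequality for $\Mean[\lambda^p(X(t))]$ via Dynkin's formula, exploiting the purely dissipative contribution $-\Mean[\lambda^{p+1}(X(t))]$ from the jump term to dominate the destabilising drift $\Mean[\lambda(X(t))]\Mean[V]$. The bound on $\Mean[\|X(t)\|]$ will then follow easily from the case $p=1$.

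The formal generator of \eqref{SDE_LIMITE_NONSTANDARD_RATE} applied to $\lambda^p$ reads
\begin{equation*}
    \mathcal{L}\lambda^p(x) = p\lambda^{p-1}(x)\nabla\lambda(x)\cdot\bigl(\Mean[\lambda(X(t))]\Mean[V]-x\bigr)+\lambda(x)\int_{[0,1]}\lambda^p(U(h_1))\nu_1(dh_1)-\lambda^{p+1}(x).
\end{equation*}
The jump contribution has a dissipative leading term $-\lambda^{p+1}(x)$ and a source $\bar K_p\lambda(x)$ which is bounded because $U$ is bounded and $\lambda$ is continuous. Writing $\lambda=b(\|\cdot\|)+h$ and using $b'\leq\gamma b+c$, the drift contribution splits into the non-positive piece $-pb'(\|x\|)\|x\|\lambda^{p-1}(x)$, coming from $\nabla\lambda\cdot(-x)$, and a term bounded by $p(\gamma\lambda(x)+c)\lambda^{p-1}(x)\|c(t)\|$ up to lower-order corrections involving the bounded function $h$, with $c(t)\doteq\Mean[\lambda(X(t))]\Mean[V]$. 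Taking expectations and applying the Jensen/Hölder estimate
\begin{equation*}
    \Mean[\lambda(X)]\Mean[\lambda^p(X)] \leq \Mean[\lambda^p(X)]^{(p+1)/p} \leq \Mean[\lambda^{p+1}(X)]
\end{equation*}
yields
\begin{equation*}
    \frac{d}{dt}\Mean[\lambda^p(X(t))] \leq \bigl(p\gamma\Mean[\|V\|]-1\bigr)\Mean[\lambda^{p+1}(X(t))]+A_p\Mean[\lambda^p(X(t))]+B_p,
\end{equation*}
where $A_p,B_p$ depend only on the coefficients and on moments of order strictly below $p$. Because $\gamma<1/(5\Mean[\|V\|])$, the prefactor $p\gamma\Mean[\|V\|]-1$ is strictly negative for every $p\in\{1,2,3,4\}$; combined once more with Jensen's inequality $\Mean[\lambda^{p+1}(X)]\geq\Mean[\lambda^p(X)]^{(p+1)/p}$, the above is a Bernoulli-type differential inequality whose supersolutions stay bounded, so that $\sup_{t\geq 0}\Mean[\lambda^p(X(t))]<\infty$ by ODE comparison. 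The statement is proved inductively on $p$: as soon as moments of order $<p$ are known to be uniformly bounded, the constants $A_p,B_p$ are finite and the argument closes for order $p$.

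For $\Mean[\|X(t)\|]$, I apply Dynkin to the smooth approximation $f^\epsilon$ of the norm introduced in Theorem~\ref{existence_uniqueness_McKeanVlasov_NONSTANDARD_DRIFT} and let $\epsilon\downarrow 0$. The drift term gives in the limit $\nabla f^\epsilon(x)\cdot(c(t)-x)\leq \|c(t)\|-\|x\|$, and the jump term gives $\lambda(x)\bigl(\int\|U(h_1)\|\nu_1(dh_1)-\|x\|\bigr)$, whose negative part $-\Mean[\lambda(X)\|X\|]$ is discarded. Combined with the uniform bound on $\Mean[\lambda(X(t))]$ proved in the first step, this produces $\frac{d}{dt}\Mean[\|X(t)\|]\leq -\Mean[\|X(t)\|]+C$, and a linear Grönwall argument yields the desired uniform-in-time estimate.

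The main obstacle is the careful bookkeeping of all lower-order contributions, in particular those generated by the $h$ part of $\lambda$ and by the cross term $p\lambda^{p-1}\nabla\lambda\cdot(-x)$: these must be absorbed into $A_p$ and $B_p$ without inflating the critical coefficient of $\Mean[\lambda^{p+1}(X(t))]$, so that the threshold $\gamma<1/(5\Mean[\|V\|])$ is sharp enough to close the induction up to $p=4$. A secondary technical point is to justify Itô's formula applied to $\lambda^p(X)$; this is handled by a standard localisation argument relying on the $C^1$ assumption on $\lambda$.
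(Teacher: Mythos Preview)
Your strategy is the paper's: apply It\^o/Dynkin, extract the dissipative $-\Mean[\lambda^{p+1}]$ from the jump term, bound the destabilising drift contribution by $p\gamma\Mean[\|V\|]\Mean[\lambda^{p+1}]$ via the H\"older/Jensen chain $\Mean[\lambda]\Mean[\lambda^p]\leq\Mean[\lambda^p]^{(p+1)/p}\leq\Mean[\lambda^{p+1}]$, and conclude by ODE comparison (stated in the paper as Lemma~\ref{limitatezza}). The paper reverses your order---it bounds $\Mean[\|X(t)\|]$ first and independently, using only monotonicity of $b$ to get $b(r)\bigl(\Mean[\|V\|]+\Mean[\|U\|]-r\bigr)\leq-\Lambda r+\beta$---but this is cosmetic.

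There is, however, a genuine technical gap. You apply the generator to $\lambda^p$, which produces the term $p\lambda^{p-1}(x)\,\nabla h(x)\cdot\bigl(c(t)-x\bigr)$. Assumption~\ref{ass} bounds only $h$, not $\nabla h$; the piece $-p\lambda^{p-1}(x)\,\nabla h(x)\cdot x$ therefore cannot be absorbed into your constants $A_p,B_p$, and your claim that the $h$-corrections are ``lower order'' is unjustified as stated. The paper sidesteps this by applying It\^o to $b^p(f^\epsilon(X))$ rather than to $\lambda^p(X)$: the outer derivative is then $pb^{p-1}b'$, controlled by $b'\leq\gamma b+c$, while the inner gradient $\nabla f^\epsilon$ has norm at most one, so $h$ enters only through the rate $\lambda=b+h$ in the jump term, where boundedness of $h$ (not of $\nabla h$) suffices. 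The regularisation $f^\epsilon$ simultaneously handles the non-differentiability of $\|\cdot\|$ at the origin, which your ``standard localisation'' remark does not address. Since $|h|\leq H$ makes uniform bounds on $\Mean[b^p]$ and $\Mean[\lambda^p]$ equivalent, the repair is straightforward---work with $b^p(f^\epsilon(\cdot))$ throughout---but as written your drift estimate does not close.
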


The proof of this lemma is in Appendix~\ref{app_B} and it basically relies on the properties of the function $b$.
\begin{theorem}[Solution of the McKean-Vlasov equation]
Under Assumption~\ref{ass}, for any initial condition $X(0)$ with bounded support and independent of $\mathcal{N}$, there exists a unique strong solution $\{X(t)\}_{t\in[0,T]}$ for \eqref{SDE_LIMITE_NONSTANDARD_RATE}.
\end{theorem}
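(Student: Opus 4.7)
The plan is to reduce \eqref{SDE_LIMITE_NONSTANDARD_RATE} to a fixed-point problem for the deterministic drift input $f(t)\doteq \Mean[\lambda(X(t))]\Mean[V]$, which is the only ``nonlinear'' piece of the equation, and then to solve it by Picard iteration. Given any locally bounded Borel $f\colon[0,T]\to\R^d$, Lemma~\ref{Lemma_Z} provides a unique strong solution $Z_f$ of \eqref{SDE_con_funct} starting from $X(0)$, so that a process $X$ is a strong solution of \eqref{SDE_LIMITE_NONSTANDARD_RATE} if and only if $X=Z_f$ with $f$ a fixed point of $\Phi$, where $\Phi(f)(t)\doteq \Mean[\lambda(Z_f(t))]\Mean[V]$.

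To run the iteration I would start from $f_0\equiv 0$ and set $f_{n+1}\doteq \Phi(f_n)$. The bounded-support assumption on $X(0)$ forces $\Mean[\lambda^p(X(0))]<\infty$ for every $p$, so Lemma~\ref{bound_a_priori} (whose proof is insensitive to replacing the nonlinear drift by a uniformly bounded deterministic one) yields a constant $C$, depending only on the parameters and on $X(0)$, such that $\sup_{t\in[0,T]}\Mean[\lambda^{p}(Z_{f_n}(t))]\leq C$ for $p=1,2,3,4$ and every $n$; in particular the sequence $\{f_n\}$ stays in a uniformly bounded class. For the contraction step I exploit the structural bound $b'(r)\leq \gamma b(r)+c$ from \eqref{condizione_su_b} together with $\|h\|_\infty\leq H$ and the positivity of $b$ (which yields $b(\|x\|)\leq \lambda(x)+H$) to obtain
\[
|\lambda(x)-\lambda(y)|\leq C_\lambda\left(1+\lambda(x)+\lambda(y)\right)\|x-y\|,\qquad x,y\in\R^d,
\]
which, combined with Cauchy--Schwarz, the $L^2$ moment bound on $\lambda(Z_{f})$ just mentioned, and an $L^2$ version of the Lipschitz estimate \eqref{bound_Z_u} (obtained by the very same argument as in Lemma~\ref{Lemma_Z}, using boundedness of $V,U$), produces
\[
\sup_{s\in[0,t]}\|\Phi(f)(s)-\Phi(g)(s)\|\leq C'\int_0^t\sup_{r\in[0,s]}\|f(r)-g(r)\|\,ds
\]
for all uniformly bounded $f,g$. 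Hence $\Phi$ is a contraction on a sufficiently short interval $[0,T_0]$ in sup norm, and pasting finitely many such intervals extends existence to $[0,T]$.

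For uniqueness, any two strong solutions $X^1,X^2$ of \eqref{SDE_LIMITE_NONSTANDARD_RATE} satisfy $X^i=Z_{f_i}$ with $f_i(t)=\Mean[\lambda(X^i(t))]\Mean[V]$; Lemma~\ref{bound_a_priori} applied to each $X^i$ provides the required moment bounds on $\lambda(X^i)$, and the same contraction estimate forces $f_1=f_2$ first on $[0,T_0]$ and then, by iteration, on $[0,T]$. The main obstacle I anticipate is the superlinearity of $\lambda$: since it is not globally Lipschitz, a naive Picard argument cannot close, and the a priori moment bound of Lemma~\ref{bound_a_priori}, which in turn rests on the restoring drift $-X(t)dt$ and on the growth constraint $\gamma<1/(5\Mean[\|V\|])$, is precisely what turns the Cauchy--Schwarz estimate above into a genuine contraction.
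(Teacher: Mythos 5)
Your proposal has the right overall shape — reduce \eqref{SDE_LIMITE_NONSTANDARD_RATE} to a fixed-point problem for the drift $f(t)=\Mean[\lambda(X(t))]\Mean[V]$, construct solutions for frozen $f$ via Lemma~\ref{Lemma_Z}, and Picard-iterate — and this is essentially the paper's strategy. However, there is a genuine gap in the existence part. You claim that Lemma~\ref{bound_a_priori} gives a bound $\sup_{t}\Mean[\lambda^{p}(Z_{f_n}(t))]\leq C$ \emph{uniformly in $n$} because its proof is ``insensitive to replacing the nonlinear drift by a uniformly bounded deterministic one.'' But the constant produced by that argument depends on the bound on the drift, and the only control you have on $\|f_{n+1}\|_\infty$ is precisely the moment bound on $\lambda(Z_{f_n})$ you are trying to establish. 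Since the a.s.\ estimate underlying Lemma~\ref{Lemma_Z} gives only $\|Z_{f_n}(t)\|\leq K_0+\int_0^t\|f_n(s)\|\,ds$, and $b$ may grow exponentially (condition~\eqref{condizione_su_b} allows $b(r)\sim e^{\gamma r}$), the chain $f_n\mapsto\|Z_{f_n}\|\mapsto\lambda(Z_{f_n})\mapsto f_{n+1}$ can blow up across iterations. The argument is circular without some extra device.

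The paper supplies that device: a truncation that your scheme omits. One fixes $C>0$ and iterates with drift $(\Mean[\lambda(Z^C_{n-1}(t))]\wedge C)\Mean[V]$. Then the drift is bounded by $C\Mean[\|V\|]$ \emph{independently of $n$}, so the a.s.\ bound $\|Z^C_n(t)\|\leq K+tC\Mean[\|V\|]$ is uniform, the local Lipschitz constant of $\lambda$ on the relevant compact is a single $K_T$, and the Picard iteration closes by a factorial estimate. Only \emph{after} passing to the limit $Z^C$ of the truncated scheme does one invoke Lemma~\ref{bound_a_priori}, applied to the actual (truncated-drift) nonlinear equation that $Z^C$ solves, to obtain $\sup_t\Mean[\lambda(Z^C(t))]\leq \bar C$ with $\bar C$ independent of $C$; choosing $C>\bar C$ makes the truncation inactive, and $Z^C$ solves \eqref{SDE_LIMITE_NONSTANDARD_RATE}. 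Your untruncated iteration skips the step that makes the bootstrap well-posed.

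A secondary concern: you invoke ``an $L^2$ version of the Lipschitz estimate \eqref{bound_Z_u} (obtained by the very same argument as in Lemma~\ref{Lemma_Z}).'' This does not follow by the same argument — the $L^1$ proof rests on the non-compensated positivity of the Poisson integral, and squaring the integrand produces a term proportional to $|\lambda(Z_f)-\lambda(Z_g)|$, not its square, which is exactly the obstruction the paper's $L^1$ framework is designed to avoid (see the remark on condition~\eqref{cond} in Section~\ref{Particle_system_and_limit}). It happens to be repairable here because, with the truncation in place, $Z_f$ and $Z_g$ are a.s.\ bounded, so $\|Z_f-Z_g\|^2\leq C\|Z_f-Z_g\|$ and the $L^1$ estimate suffices; in fact no Cauchy--Schwarz is needed, since $\lambda$ is globally Lipschitz on the a.s.\ range of the truncated iterates. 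Your uniqueness argument, by contrast, is sound: for two genuine solutions Lemma~\ref{bound_a_priori} applies directly, no circularity arises, and the contraction estimate closes as in the paper.
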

\begin{proof}
Fix a constant $C>0$, and consider the following Picard iteration: $Z^C_0(t)\equiv X(0)$ and
\begin{equation*}
\left\{
\begin{array}{l}
dZ^C_n(t)=-Z^C_n(t)dt+\left(\Mean\left[\lambda(Z^C_{n-1}(t))\right]\wedge C\right)\Mean\left[V\right]dt\\
\qquad\qquad\qquad-\int_{[0,\infty)\times [0,1]^{\mathbb{N}}}\left(Z^C_n(t)-U(h_1)\right)\mathds{1}_{[0,\lambda(Z^C_n(t)))}(u)\mathcal{N}(dt,du,dh),\\
Z^C_n(0)=X(0)
\end{array}
\right.
\end{equation*}
The following almost sure apriori bound is essentially obvious: 
\[
\|Z^C(t)\|\leq K+tC\Mean[\|V\|]
\]
for a suitable $K>0$ depending on the support of $X(0)$ and the range of $U(h)$. Indeed, when $\|Z^C(t)\|$ is large, the linear term $-Z^C_n(t)dt$ as well as the jumps can only decrease the norm.
From Lemma~\ref{Lemma_Z} we now that there exists a constant $C_T$ such that
\[
\Mean\left[\sup_{t\in[0,T]}\|Z^C_{n+1}(t)-Z^C_n(t)\|\right]\leq  C_T\Mean\left[\|V\|\right]\int_0^T\left\|\Mean\left[\lambda(Z^C_{n}(s))\right]-\Mean\left[\lambda(Z^C_{n-1}(s))\right]\right\|ds.
\]
Thanks to the a.s. bounds on $\|Z^C_n(t)\|$, we can exploit the local Lipschitzianity of $\lambda$ and get, for a certain constant $K_T>0$, 
\begin{align*}
\Mean\left[\sup_{t\in[0,T]}\|Z^C_{n+1}(t)-Z^C_n(t)\|\right]\leq&C_T\Mean\left[\|V\|\right]K_T\int_0^T\Mean\left[\sup_{s\in[0,t]}\|Z^C_{n}(s)-Z^C_{n-1}(s)\|\right]dt\\
\leq&\dots\leq \frac{\left(K_TC_T\Mean\left[\|V\|\right]T\right)^n}{n!}\Mean\left[\sup_{s\in[0,t]}\|Z^C_{1}(s)-Z^C_{0}(s)\|\right].
\end{align*} 
Therefore the sequence $\{Z^C_n\}_{n\in\mathbb{N}}$ is a Cauchy sequence and its limit $Z^C$ is a solution of the \mbox{SDE}
\begin{multline*}
dZ^C(t)=-Z^C(t)dt+\left(\Mean\left[\lambda(Z^C(t))\right]\wedge C\right)\Mean\left[V\right]dt\\
\qquad\qquad\qquad-\int_{[0,\infty)\times [0,1]^{\mathbb{N}}}\left(Z^C(t^-)-U(h_1)\right)\mathds{1}_{[0,\lambda(Z^C(t^-)))}(u)\mathcal{N}(dt,du,dh).
\end{multline*}
By Lemma \ref{bound_a_priori}, we can choose $C$ so that $\Mean\left[\lambda(Z^C(t))\right] \leq C$ for all $t$, so that $Z^C$ is indeed a solution of \eqref{SDE_LIMITE_NONSTANDARD_RATE}.

To prove uniqueness we can consider two solutions $Z_1$ and $Z_2$. Using the above apriori bound, \eqref{bound_Z_u} and the  Gronwall Lemma their equality follows from standard arguments.
\end{proof}

\subsection{Propagation of Chaos}\label{prop_chaos_rate}
 As in the previous sections, we introduce an intermediate process $\{Y^N(t)\}_{t\in[0,T]}$ that is the solution of a system, similar to \eqref{SDE_XN_RATE_NONSTANDARD}, that is
 {
\begin{multline}\label{SDE_YN_RATE_NONSTANDARD}
dY_i^N(t)=\displaystyle{-Y_i^N(t)dt+\frac{1}{N}\sum_{j=1}^N\Mean\left[V\right]\lambda(Y^N_j(t)) dt}\\
\displaystyle{\quad-\int_{[0,\infty)\times [0,1]^{\mathbb{N}}}\left(Y_i^N(t)-U(h_i)\right)\mathds{1}_{[0,\lambda(Y^N_i(t)))}(u)\mathcal{N}^i(dt,du,dh)}
\end{multline}} for all $i=1,\dots,N$. We indicate the empirical measure corresponding to the solution of \eqref{SDE_YN_RATE_NONSTANDARD} as $\mu^N_Y$. In order to use a coupling procedure to prove propagation of chaos, we need to set some a priori bounds on the involved quantities.  The proofs of the two following lemmas are in Appendix~\ref{app_B}, we state them here to highlight the quantities involved and the comparison with the a priori bounds for the nonlinear process \eqref{SDE_LIMITE_NONSTANDARD_RATE}.

\begin{lemma}\label{bound_per_<mu,f^5>}
For $N>0$, under Assumption~\ref{ass}, let $X^N$ and $Y^N$ be solutions, respectively, of \eqref{SDE_XN_RATE_NONSTANDARD} and \eqref{SDE_YN_RATE_NONSTANDARD}, starting from initial conditions s.t. $\Mean\left[\langle \mu^N_X(0), \lambda^4(\cdot) \rangle\right]<\infty$ and $\Mean\left[\langle \mu^N_Y(0), \lambda^4(\cdot) \rangle\right]<\infty$ . Then there exists  a certain $N_0>0$ such that it holds 
\begin{equation*}
\sup_{N\geq N_0}\sup_{t\geq0}\Mean\left[\langle \mu^N_X(t), \lambda^4(\cdot) \rangle\right]<\infty \qquad \qquad \text{ and } \qquad \qquad \sup_{N\geq N_0}\sup_{t\geq0}\Mean\left[\langle \mu^N_Y(t), \lambda^4(\cdot)\rangle\right]<\infty.
\end{equation*}
\end{lemma}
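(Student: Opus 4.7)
The plan is to apply It\^o's formula to $\varphi(x)=\lambda^4(x)$ along each component $X^N_i$ of the particle system and obtain a scalar differential inequality for $M_p^X(t)\doteq \Mean\bigl[\langle\mu^N_X(t),\lambda^p(\cdot)\rangle\bigr]$ (and, analogously, for $M_p^Y(t)$) of the form
\begin{equation*}
\frac{d}{dt}M^X_4(t)\leq -c_1\bigl(M^X_4(t)\bigr)^{5/4}+c_2,
\end{equation*}
with $c_1,c_2>0$ independent of $N$, valid for all $N\geq N_0$. From this the uniform-in-$t$ bound will follow by a standard ODE comparison. The computation for $Y^N$ is completely analogous, and in fact simpler: in \eqref{SDE_YN_RATE_NONSTANDARD} the collateral interaction already appears as a bona fide drift, so no Taylor remainders have to be controlled.

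First I would decompose the generator applied to $\lambda^4(x_i)$ into three contributions. The linear drift $-X_i$ yields $-4\lambda^3(X_i)\nabla\lambda(X_i)\cdot X_i$, whose dominant piece $-4\lambda^3(X_i)b'(\|X_i\|)\|X_i\|$ is non-positive, the $\nabla h$-part being bounded. The main jump of particle $i$ (rate $\lambda(X_i)$, post-jump value $U(h_i)$ bounded) produces $\lambda(X_i)[\lambda^4(U(h_i))-\lambda^4(X_i)]$, whose $\nu_1$-integral is at most $K\lambda(X_i)-\lambda^5(X_i)$, giving the crucial dissipation term $-\lambda^5(X_i)$. Each collateral jump of a particle $j\neq i$ shifts $X_i$ by $V(h_j,h_i)/N$, so a first-order Taylor expansion of $\lambda^4$ at $X_i$ yields leading term $\lambda(X_j)\nabla\lambda^4(X_i)\cdot V(h_j,h_i)/N$ plus $O(N^{-2})$ remainders of the form $\lambda(X_j)\cdot P(\lambda(X_i))$ with $P$ polynomial of degree at most $4$. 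Using \eqref{condizione_su_b} together with the boundedness of $h$ and $\nabla h$ one gets $\|\nabla\lambda(x)\|\leq \gamma\lambda(x)+C$, so the dominant collateral contribution is bounded by $[4\gamma\lambda^4(X_i)+C'\lambda^3(X_i)]\,\Mean[\|V\|]\cdot \tfrac{1}{N}\sum_j\lambda(X_j)$.

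Summing over $i$, dividing by $N$, and taking expectation, the key step is to dominate the resulting double sum using Young's inequality $\lambda^4(x)\lambda(y)\leq \tfrac{4}{5}\lambda^5(x)+\tfrac{1}{5}\lambda^5(y)$, which yields the pointwise bound
\begin{equation*}
\frac{1}{N^2}\sum_{i,j}\lambda^4(X^N_i(t))\lambda(X^N_j(t))\leq \frac{1}{N}\sum_i\lambda^5(X^N_i(t)).
\end{equation*}
Combining the $-M^X_5(t)$ produced by the main-jump term with the leading collateral contribution estimated above gives a coefficient in front of $M^X_5(t)$ that is strictly negative thanks to the hypothesis $\gamma<1/(5\Mean[\|V\|])$; the extra margin with respect to $1/(4\Mean[\|V\|])$ is precisely what is needed to absorb the $C'\lambda^3$ terms, the drift correction, and the $O(N^{-1})$ Taylor remainders for $N\geq N_0$, via further applications of Young's inequality of the form $\lambda^p\leq \epsilon\lambda^5+C_\epsilon$ for $p<5$.

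Finally, Jensen's inequality for $z\mapsto z^{5/4}$ against the probability measure $\Prb\otimes \tfrac{1}{N}\sum_i\delta_{X^N_i(t)}$ (using exchangeability of the particles) yields $M^X_5(t)\geq (M^X_4(t))^{5/4}$, closing the inequality. An elementary comparison argument then gives $M^X_4(t)\leq \max\{M^X_4(0),(c_2/c_1)^{4/5}\}$ for all $t\geq 0$ and all $N\geq N_0$, which is the claim; replacing $X^N$ by $Y^N$ throughout, with the collateral drift taken directly (no Taylor expansion), produces the analogous bound for $M^Y_4$. The main obstacle I anticipate is the bookkeeping of the $O(N^{-1})$ Hessian and higher-order Taylor contributions from the collateral jumps, and verifying that they are indeed absorbed uniformly in $t$ by the negative $M^X_5$ term; to legitimately take expectations of the It\^o local martingales that appear in the derivation, a preliminary truncation of $\lambda$ (as in the existence proof of Appendix~\ref{app_B}) followed by a monotone passage to the limit may be required.
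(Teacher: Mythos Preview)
Your strategy is essentially the paper's: apply It\^o's formula to the fourth power of the rate along each particle, use the main jump to produce a $-\lambda^5$ dissipation, control the collateral term by a $\lambda^4(X_i)\lambda(X_j)$ cross term that is absorbed via H\"older/Young into $\langle\mu^N,\lambda^5\rangle$ with coefficient strictly less than $1$ under $\gamma<1/(5\Mean[\|V\|])$, close with Jensen $\langle\mu^N,\lambda^5\rangle\geq\langle\mu^N,\lambda^4\rangle^{5/4}$, and conclude by an integral comparison (the paper's Lemma~\ref{limitatezza}, i.e.\ your ODE argument).

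The paper differs from you in three technical choices that remove exactly the obstacles you flag at the end. First, it works with $b^4(f^\epsilon(\cdot))$ rather than $\lambda^4(\cdot)$, where $f^\epsilon$ is the smooth approximation of $\|\cdot\|$; this sidesteps the non-differentiability of $\|\cdot\|$ at the origin and, since $\lambda-b=h$ is bounded, avoids any appeal to a bound on $\nabla h$ (which is \emph{not} assumed in Assumption~\ref{ass}, so your line $\|\nabla\lambda(x)\|\leq\gamma\lambda(x)+C$ is not justified as written). Second, instead of Taylor-expanding the collateral jumps it invokes the increment bound of Lemma~\ref{lemma_b^p}, namely $|b^p(\|x+a\|)-b^p(\|x\|)|\leq\|a\|(\gamma_1 b^p(\|x\|)+c_1)$ with $\gamma_1<(4+2\epsilon)\gamma$, which delivers the needed control of all higher-order remainders in one stroke; this is precisely the ``bookkeeping'' you anticipated as the main difficulty. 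Third, the localization is done not by truncating $\lambda$ but by the stopping time $\tau_K=\inf\{t:\langle\mu^N_X(t),b^5(f^\delta(\cdot))\rangle\geq K\}$ combined with optional stopping, after which the bound obtained is seen to be independent of $K$ and one lets $K\to\infty$.
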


Lemma~\ref{bound_per_<mu,f^5>} is crucial for proving that the number of jumps of the system in a compact time interval is proportional to $N$ with probability increasing with $N$. This bound is stated in the following lemma.

\begin{lemma}[Bound on the number of jumps]\label{bound_jumps} Assume that Assumption~\ref{ass} is satisfied, that, for any $N>0$, $X^N$ and $Y^N$ are solutions, respectively, of \eqref{SDE_XN_RATE_NONSTANDARD} and \eqref{SDE_YN_RATE_NONSTANDARD}, starting from  initial conditions that are $\mu_0$-chaotic. Here $\mu_0$ is a probability measure on $\mathbb{R}^d$ s.t. $\Mean_{\mu_0} \left[ \lambda^3(X) \right]<\infty$.  Then, for any $T>0$, there exists a positive constant $H_T$ and a natural number $N_0>0$ such that, for certain positive constants $K_T$  and $\tilde K_T$ 
$$
\begin{array}{ccc}
\displaystyle{
\Prb\left(\frac{C_N(T)}{N}\geq H_T\right)\leq \frac{K_T}{N}} & \text{ and } & \displaystyle{
\Prb\left(\int_0^T\langle\mu^N_Y(s),\lambda \rangle ds\geq H_T\right)\leq \frac{\tilde K_T}{N}},
\end{array}
$$
 for all $N>N_0$. Here $C_N(T)$ is the number of jumps performed by system \eqref{SDE_XN_RATE_NONSTANDARD} up to time $T$. 
\end{lemma}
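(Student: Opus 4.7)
Both assertions will follow from a Doob--Meyer decomposition combined with Chebyshev's inequality, exploiting the uniform fourth-moment control $\sup_{N\geq N_{0},\, s\in[0,T]} \Mean\bigl[\langle \mu^{N}_{\cdot}(s),\lambda^{4}\rangle\bigr] < \infty$ provided by Lemma~\ref{bound_per_<mu,f^5>}.

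For the first assertion, decompose the total count of main jumps as
\[
C_{N}(t) = M_{N}(t) + N\int_{0}^{t}\langle \mu^{N}_{X}(s),\lambda\rangle\,ds,
\]
where $M_{N}$ is a pure-jump square-integrable martingale whose predictable variation is exactly the compensator, $\langle M_{N}\rangle_{T} = N\int_{0}^{T}\langle \mu^{N}_{X}(s),\lambda\rangle\,ds$. Jensen's inequality $\langle\mu,\lambda\rangle \leq \langle\mu,\lambda^{4}\rangle^{1/4}$ together with Lemma~\ref{bound_per_<mu,f^5>} yields $\Mean[\langle M_{N}\rangle_{T}] = O(N)$. Splitting the event $\{C_{N}(T)\geq NH_{T}\}$ into the martingale deviation $\{|M_{N}(T)|\geq NH_{T}/2\}$ and the compensator deviation $\{\int_{0}^{T}\langle \mu^{N}_{X}(s),\lambda\rangle\,ds \geq H_{T}/2\}$, Doob's $L^{2}$-inequality gives
\[
\Prb\bigl(|M_{N}(T)|\geq NH_{T}/2\bigr) \leq \frac{4\Mean[\langle M_{N}\rangle_{T}]}{(NH_{T})^{2}} = O(1/N),
\]
reducing the first assertion to a deviation bound on $\int_{0}^{T}\langle \mu^{N}_{X}(s),\lambda\rangle\,ds$ of exactly the same form as the second.

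For the second assertion, set $\Lambda^{N}(t) := \langle \mu^{N}_{Y}(t),\lambda\rangle = \tfrac{1}{N}\sum_{i=1}^{N}\lambda(Y^{N}_{i}(t))$; by exchangeability $\Mean[\Lambda^{N}(t)] = \Mean[\lambda(Y^{N}_{1}(t))]$, uniformly bounded in $N,t$ by Lemma~\ref{bound_per_<mu,f^5>}. Fix $H_{T}$ strictly larger than $T\cdot\sup_{N,t}\Mean[\lambda(Y^{N}_{1}(t))]$. The key intermediate estimate is
\[
\mathrm{Var}(\Lambda^{N}(t)) = \tfrac{1}{N}\mathrm{Var}\bigl(\lambda(Y^{N}_{1}(t))\bigr) + \tfrac{N-1}{N}\mathrm{Cov}\bigl(\lambda(Y^{N}_{1}(t)),\lambda(Y^{N}_{2}(t))\bigr) = O(1/N),
\]
uniformly in $t\in[0,T]$. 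Integrating in $t$ yields $\mathrm{Var}\bigl(\int_{0}^{T}\Lambda^{N}(s)\,ds\bigr) = O(1/N)$, and Chebyshev around the mean closes the second assertion. The analogous deviation bound for $\int_{0}^{T}\langle \mu^{N}_{X},\lambda\rangle\,ds$ needed to complete the first assertion is then obtained by transferring the $Y^{N}$-estimate through Theorem~\ref{iniziale_intermedio_NONSTANDARD_DRIFT}, using the local Lipschitz continuity of $\lambda$ on the a-priori bounded range guaranteed by Assumption~\ref{ass}.

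\textbf{Main obstacle.} The delicate step is the $O(1/N)$ bound on $\mathrm{Cov}\bigl(\lambda(Y^{N}_{1}(t)),\lambda(Y^{N}_{2}(t))\bigr)$: uniform fourth moments alone yield only a constant-order Chebyshev estimate. I expect to obtain the decay by exploiting that distinct components of $Y^{N}$ interact only through the bounded averaged drift $\Mean[V]\Lambda^{N}$ and are driven by independent Poisson measures, applying Itô's formula to $\lambda(Y^{N}_{1})\lambda(Y^{N}_{2})$ and running a Gronwall argument that feeds on the a-priori moments of Lemma~\ref{bound_per_<mu,f^5>}. Once this covariance estimate is in place, the remainder of the proof is a routine combination of martingale and moment inequalities.
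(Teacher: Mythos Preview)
Your decomposition of $C_N(T)$ into martingale plus compensator is the same as the paper's first step, and your $L^2$ control of $M_N$ via $\langle M_N\rangle_T = N\int_0^T\langle\mu^N_X,\lambda\rangle\,ds$ is fine. The difficulties are elsewhere.

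\textbf{Circularity.} You propose to obtain the deviation bound for $\int_0^T\langle\mu^N_X,\lambda\rangle\,ds$ by transferring the corresponding $Y^N$ bound through a theorem comparing $X^N$ and $Y^N$. You cite Theorem~\ref{iniziale_intermedio_NONSTANDARD_DRIFT}, which belongs to Section~\ref{non_lip_drift} and operates under Assumption~\ref{ASS_NL2}, not Assumption~\ref{ass}; the relevant comparison here is Theorem~\ref{iniziale_intermedio_rate}, but that theorem \emph{uses} Lemma~\ref{bound_jumps} in its proof (the event $E_N$ there is defined via the constant $H_T$ produced by this lemma). So the transfer argument is circular. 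The paper avoids this by proving the $X^N$ and $Y^N$ bounds directly and in parallel, with no transfer between them.

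\textbf{The covariance gap.} You correctly identify $\mathrm{Cov}\bigl(\lambda(Y^N_1(t)),\lambda(Y^N_2(t))\bigr)=O(1/N)$ as the crux of your argument, but the sketch (It\^o on $\lambda(Y_1^N)\lambda(Y_2^N)$ plus Gronwall) is not a proof: $\lambda$ is only locally Lipschitz, the resulting drift terms involve products like $b'(\|Y_1^N\|)\,b(\|Y_2^N\|)\,\Lambda^N$, and closing Gronwall here already requires control on quantities of the same nature as the one you are trying to bound. An $O(1/N)$ covariance between mean-field particles is essentially a propagation-of-chaos statement, which is precisely what Lemma~\ref{bound_jumps} is meant to \emph{enable}.

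\textbf{What the paper does instead.} The paper bypasses covariance entirely. It applies It\^o's formula directly to the empirical average $\langle\mu^N_X(t),b(f^\delta(\cdot))\rangle$ and isolates the martingale part $M^N_{b,\delta}$ coming from the compensated Poisson measures. Because the $\tilde{\mathcal N}^i$ are orthogonal, the predictable quadratic variation $\langle M^N_{b,\delta}\rangle_T$ carries a factor $1/N^2$ summed over $N$ particles, and the moment bounds of Lemma~\ref{bound_per_<mu,f^5>} make each summand $O(1)$, giving $\Mean[\langle M^N_{b,\delta}\rangle_T]\leq C_T/N$. Doob's inequality then yields $\Prb(\sup_t M^N_{b,\delta}(t)\geq 1)\leq C_T/N$, and a law-of-large-numbers bound on the initial condition gives $\Prb(\langle\mu^N_X(0),b(f^\delta)\rangle\geq 1+\Mean_{\mu_0}[b(f^\delta)])\leq C/N$. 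On the intersection of these two high-probability events, the It\^o formula becomes a \emph{pathwise} differential inequality for $t\mapsto\langle\mu^N_X(t),b(f^\delta)\rangle$ of the form treated in Lemma~\ref{limitatezza}, which forces a deterministic uniform bound. Since $\lambda\leq b(f^\delta)+H$, this gives the required bound on $\int_0^T\langle\mu^N_X,\lambda\rangle\,ds$ with exceptional probability $O(1/N)$. The pathwise ODE comparison is what replaces your covariance estimate.
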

The bounds on the number of \emph{collateral} jumps and of the corresponding drift in a compact time interval plays a role in the proof of propagation of chaos, since they let us exploit the local Lipschitzianity of the function $\lambda$ when we start from initial conditions with bounded support. The proofs of these lemmas involve the form of the function $\lambda$ and they are in Appendix~\ref{app_B}. In the following we state and prove the result on propagation of chaos and also in this case, the simultaneous jumps result in a rate of the order $\frac{1}{\sqrt{N}}$. As in the previous sections, we start with the comparison between the particle system $X^N$ and the intermediate system $Y^N$.

\begin{theorem}\label{iniziale_intermedio_rate}
Let Assumptions~\ref{ass} and \ref{ASS_COLL_JUMPS}  be satisfied and let $X^N$ and $Y^N$ be the solution, respectively,  of \eqref{SDE_XN_RATE_NONSTANDARD} and \eqref{SDE_YN_RATE_NONSTANDARD} with initial conditions $X^N(0)=Y^N(0)$ a.s. that are $\mu_0$-chaotic, with $\mu_0$  probability measure on $\mathbb{R}^d$ with compact support.
We assume the two processes are driven by the same Poisson random measures, and start from the same permutation invariant initial condition with compact support. Then, for each fixed $i \in \mathbb{N}$,
\begin{equation*} 
	\lim_{N \to +\infty} \Mean\left[ \sup_{t\in[0,T]} \|X^N_i(t)-Y^N_i(t)\| \right] = 0.
\end{equation*}

\end{theorem}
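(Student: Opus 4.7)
The plan is to adapt the coupling argument of Proposition~\ref{iniziale_intermedio} to the present setting, the main new difficulty being that the jump rate $\lambda$ is only locally Lipschitz. The key device is Lemma~\ref{bound_jumps}: introducing the stopping time
\[
    \tau_N \doteq \inf\Bigl\{ t \geq 0 : C_N(t) \geq H_T N \ \text{or}\ \int_0^t \langle \mu^N_Y(s), \lambda \rangle\, ds \geq H_T \Bigr\},
\]
we obtain $\Prb(\tau_N < T) = O(1/N)$. Because the main jumps reset each particle to $U(h_i)$, which lies in a fixed compact set, the linear drift $-x$ is contracting toward the origin, and the only source of growth of $\|X^N_i(t)\|$ (resp.\ $\|Y^N_i(t)\|$) is the cumulative collateral displacement $\tfrac{1}{N} C_N(t)\|V\|_\infty$ (resp.\ the cumulative collateral drift $\|V\|_\infty \int_0^t \langle \mu^N_Y(s), \lambda\rangle\,ds$); consequently, the trajectories of both $X^N_i$ and $Y^N_i$ are confined to a deterministic compact ball $B_{R_T}$ on $[0, T\wedge \tau_N]$. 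Since $\lambda\in C^1$, it is Lipschitz on $B_{R_T}$ with some constant $L_T$.

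With this localization in place, I reproduce, up to time $\tau_N$, the decomposition used in the proof of Proposition~\ref{iniziale_intermedio}:
\[
    \Mean\!\left[\sup_{s \leq r \wedge \tau_N}\|X^N_i(s) - Y^N_i(s)\|\right] \leq F + \Theta + \psi,
\]
where $F$ accounts for the drift ($-x$, globally Lipschitz, trivial), $\Theta$ for the collateral jumps, and $\psi$ for the main jumps. The term $\Theta$ splits into a Poisson martingale piece controlled by Burkholder-Davis-Gundy together with the boundedness of $V$ (yielding the $O(1/\sqrt{N})$ rate), a compensated difference controlled via the local Lipschitz constant $L_T$ of $\lambda$ on $B_{R_T}$, and a diagonal $i=j$ contribution of order $1/N$. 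For $\psi$, the main jump amplitude $-(X - U(h_i))$ is globally Lipschitz in $X$ and the difference of the indicators $\mathds{1}_{(0,\lambda(X^N_i)]}-\mathds{1}_{(0,\lambda(Y^N_i)]}$ is again controlled by $L_T$, exactly as in Proposition~\ref{iniziale_intermedio}. Summing over $i$, dividing by $N$, and iterating Gronwall over a finite number of sufficiently short sub-intervals yields
\[
    \Mean\!\left[\sup_{t \leq T \wedge \tau_N}\|X^N_i(t) - Y^N_i(t)\|\right] \leq \frac{C_T}{\sqrt{N}}.
\]

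Finally, I remove the stopping time via Cauchy-Schwarz:
\[
    \Mean\!\left[\sup_{t \leq T}\|X^N_i - Y^N_i\|\right] \leq \Mean\!\left[\sup_{t \leq T \wedge \tau_N}\|X^N_i - Y^N_i\|\right] + \Mean\!\left[\sup_{t \leq T}\|X^N_i - Y^N_i\|^2\right]^{1/2} \Prb(\tau_N < T)^{1/2},
\]
where the second moment is bounded uniformly in $N$ using the a priori $L^p$ estimates from Lemma~\ref{bound_per_<mu,f^5>} together with the boundedness of $U$ and $V$, and $\Prb(\tau_N < T) = O(1/N)$ by Lemma~\ref{bound_jumps}. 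Hence the boundary contribution is also $O(1/\sqrt{N})$ and the full bound is $O(1/\sqrt{N})$, which in particular tends to zero. The main obstacle is the bookkeeping in the second step: because $R_T$, and hence $L_T$, grow with $T$ through the collateral contribution, the Gronwall iteration over a bounded number of short sub-intervals must be arranged so that the constants accumulated at each step do not explode; this is precisely where the explicit deterministic threshold $H_T$ furnished by Lemma~\ref{bound_jumps} is essential.
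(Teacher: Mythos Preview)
Your argument is essentially the paper's proof, with one cosmetic difference: the paper conditions on the event
\[
    E_N \doteq \Bigl\{ \tfrac{C_N(T)}{N} \leq H_T \Bigr\} \cap \Bigl\{ \int_0^T \langle \mu^N_Y(s), \lambda\rangle\,ds \leq H_T \Bigr\}
\]
and splits each term of the decomposition as $\Mean[\,\cdot\,\mathds{1}_{E_N}] + \Mean[\,\cdot\,\mathds{1}_{E_N^c}]$, handling the second summand by Cauchy--Schwarz together with the moment bounds of Lemma~\ref{bound_per_<mu,f^5>}, whereas you achieve the same localization with the stopping time $\tau_N$ and a single Cauchy--Schwarz at the end. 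Both routes rest on exactly the same ingredients (pathwise confinement to a compact ball, local Lipschitzianity of $\lambda$ there, BDG for the Poisson martingale, Lemma~\ref{bound_jumps} for $\Prb(E_N^c)=\Prb(\tau_N<T)=O(1/N)$, and Lemma~\ref{bound_per_<mu,f^5>} for the residual moments), and both deliver the $O(1/\sqrt{N})$ rate.

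Two minor remarks. First, your concern about iterating Gronwall over short sub-intervals is unfounded here: there is no diffusion in \eqref{SDE_XN_RATE_NONSTANDARD}--\eqref{SDE_YN_RATE_NONSTANDARD}, so no $\sqrt{t}\,\Mean[\sup\cdots]$ term arises from BDG and the Gronwall closes directly on $[0,T]$, as in the paper. Second, your claim that $\Mean\bigl[\sup_{t\leq T}\|X^N_i-Y^N_i\|^2\bigr]$ is bounded uniformly in $N$ is correct but deserves one line of justification: the pathwise bound $\sup_{t\leq T}\|X^N_i(t)\|\leq K_0 + \|V\|_\infty\, C_N(T)/N$ reduces this to controlling $\Mean[(C_N(T)/N)^2]$, which follows from the martingale--plus--compensator decomposition of $C_N(T)/N$ and the $\lambda^2$ moment bound in Lemma~\ref{bound_per_<mu,f^5>}. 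The paper sidesteps this by placing the Cauchy--Schwarz inside the time integral, so only fixed-time moments $\Mean[\langle\mu^N_X(t),\lambda^4\rangle]$ are needed.
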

\begin{proof}
As in previous sections, by permutation invariance of the initial conditions and of the dynamics, we have
$$
\Mean\left[ \sup_{t\in[0,T]} \|X^N_i(t)-Y^N_i(t)\| \right]=\frac{1}{N}\sum_{i=1}^N\Mean\left[ \sup_{t\in[0,T]} \|X^N_i(t)-Y^N_i(t)\| \right].
$$

Let us start with {\small
\begin{equation*}
\Mean\left[\sup_{t\in[0,T]}\|X_i^N(t)-Y_i^N(t)\|\right]\leq \Mean\left[\int_0^T \left\|X_i^N(t)-Y_i^N(t) \right\|dt\right]
+V_{X_i^N,Y_i^N}(T)+U_{X^N_i,Y^N_i}(T),
\end{equation*}  }
where, for simplicity, we have set: 
\begin{itemize}
\item[]{\small\begin{multline*}
V_{X_i^N,Y_i^N}(T)\colon = \Mean\left[\sup_{t\in[0,T]}\left\|\frac{\Mean[V]}{N}\sum_{i=1}^N\int_0^t\lambda(X^N_j(s))-\lambda(Y^N_j(s))ds\right.\right.\\
\left.\left.+\frac{1}{N}\sum_{j\neq i}\int_0^t\int_{[0,1]^{\mathbb{N}}}\int_0^{\infty}V(h_i,h_j)\mathds{1}_{[0,\lambda(X^N_j(s))}(u)\tilde{\mathcal{N}}^j(ds,du,dh)\right\|\right];
\end{multline*}}
\item[]{\small\begin{multline*}U_{X^N_i,Y^N_i}(T)\colon = \Mean\left[\sup_{t\in[0,T]}\left\|-\int_0^t\int_{[0,1]^{\mathbb{N}}}\int_0^{\infty}(X^N_i(s)-U(h_i))\mathds{1}_{[0,\lambda(X^N_i(s)))}(u)\right.\right.\\
\left.\left.
-(Y^N_i(s)-U(h_i))\mathds{1}_{[0,\lambda(Y^N_i(s)))}(u)\mathcal{N}^i(ds,du,dh)\right\|\right].
\end{multline*}}
\end{itemize}
With the notation of Lemma~\ref{bound_jumps}, we consider the positive constant $H_T$ and the event 
$$
E_N\colon = \left\{ \frac{C_N(T)}{N}\leq H_T
\right\}\cap \left\{ \int_0^T\langle\mu^N_Y(s),\lambda \rangle ds\leq H_T \right\},$$
such that $\Prb\left(E^c_N\right)\rightarrow 0$ for $N\rightarrow \infty.$ Obviously, under the event $E_N$, for all $i=1,\dots,N$, the quantities $\sup_{t\in[0,T]}\lambda(X^N_i(t))$ and $\sup_{t\in[0,T]}\lambda(Y^N_i(t))$ are uniformly bounded and we can exploit local Lipschitzianity of $\lambda$ (we will indicate its Lipschitz constant as $L_{H_T}$). Thus, we bound the first terms in $V_{X^N_i,Y^N_i}(T)$ in the following way:
\begin{multline*}
\Mean\left[\sup_{t\in[0,T]}\left\|\frac{\Mean[V]}{N}\sum_{j=1}^N\int_0^t\lambda(X^N_j(s))-\lambda(Y^N_j(s))ds\right\|\right]\\
\leq \frac{\Mean[\|V\|]}{N} \sum_{j=1}^N  \Mean \left[\left(\int_0^TL_{H_T}\|X^N_j(s)-Y^N_j(s)\|ds\right)\mathds{1}_{E_N}\right]\\
+\Mean[\|V\|] \Mean \left[\left(\int_0^T\frac{1}{N}\sum_{j=1}^N|\lambda(X^N_j(s))|+|\lambda(Y^N_j(s))| ds\right)\mathds{1}_{E^C_N}\right]\\
\leq L_{H_T} \Mean[\|V\|] \int_0^T\frac{1}{N}\sum_{j=1}^N\Mean\left[\sup_{s\in[0,t]}\|X_j^N(s)-Y_j^N(s)\|\right]dt\\
+\int_0^T\frac{\Mean[\|V\|]}{N}\sum_{j=1}^N\sqrt{\Prb(E^C_N)}\sqrt{\Mean\left[|\lambda(X^N_j(s))|^2\right]}ds
+\int_0^T\frac{\Mean[\|V\|]}{N}\sum_{j=1}^N\sqrt{\Prb(E^C_N)}\sqrt{\Mean\left[|\lambda(Y^N_j(s))|^2\right]}ds\\\leq 
L_{H_T} \Mean[\|V\|] \int_0^T\frac{1}{N}\sum_{j=1}^N\Mean\left[\sup_{s\in[0,t]}\|X_j^N(s)-Y_j^N(s)\|\right]dt\\
+\int_0^T\Mean[\|V\|]\sqrt{\Prb(E^C_N)}\sqrt{\Mean\left[\langle\mu^N_X(s),|\lambda(\cdot)|^2\rangle\right]}ds+\int_0^T\Mean[\|V\|]\sqrt{\Prb(E^C_N)}\sqrt{\Mean\left[\langle\mu^N_Y(s),|\lambda(\cdot)|^2\rangle\right]}ds.
\end{multline*}
By Lemma~\ref{bound_per_<mu,f^5>} there exists $N_0>0$ such that for all $N>N_0$ $\sup_{t\geq0}\Mean\left[\langle\mu^N_X(s),|\lambda(\cdot)|^2\rangle\right]$ and  \\
$\sup_{t\geq0}\Mean\left[\langle\mu^N_Y(s),|\lambda(\cdot)|^2\rangle\right]$ are bounded. By Lemma~\ref{bound_jumps}, there exists a constant $K_T\geq0$ such that $\Prb(E^C_N)\leq\frac{K_T}{N}$. The second term in $V^N_{X^N_i,Y^N_i}(T)$ is bounded using Burkholder-Davis-Gundy inequality, the orthogonality of the martingales $\{\tilde {\mathcal{N}}^j\}_{j\in\mathbb{N}}$ and Lemma~\ref{bound_per_<mu,f^5>}. 
\begin{multline*}
\Mean\left[\sup_{t\in[0,T]}\left\|\frac{1}{N}\sum_{j\neq1}^N\int_0^t\int_0^t\int_{[0,1]^{\mathbb{N}}}\int_0^{\infty}V(h_i,h_j)\mathds{1}_{(0,\lambda(X^N_j(s))](u)}\tilde{\mathcal{N}}^j(ds,du,dh)\right\|\right]\\
\leq \frac{M}{N} \Mean\left[\left(\sum_{j\neq i}^N\int_0^T \Mean[\|V\|^2]\lambda(X^N_j(s))ds\right)^{1/2}\right]\leq \sqrt{\frac{\Mean[\|V\|^2]}{N}}\Mean\left[\left(\int_0^T\langle\mu^N_X(t),\lambda(\cdot)\rangle dt\right)^{1/2}\right].
\end{multline*}
Therefore we get that there exists two constants $C_T$ and $K_T$ such that, for all $N>N_0$,
\begin{equation*}
V^N_{X^N_i,Y^N_i}(T)\leq C_T\int_0^T\Mean\left[\sup_{s\in[0,t]}\|X^N(s)-Y^N(s)\|^2\right] dt+\frac{K_T}{\sqrt{N}}.
\end{equation*}
With a similar argument, we get a bound of the same type for $U_{X^N_i,Y^N_i}(T)$. 
{\begin{multline*}
\frac{1}{N}\sum_{i=1}^NU_{X^N_i,Y^N_i}(T)\leq  C_T\int_0^T\frac{1}{N}\sum_{i=1}^N\|X^N_i(t)-Y^N_i(t)\| dt + \Mean\left[\mathds{1}_{E^C_{N}}\int_0^T\frac{1}{N}\sum_{i=1}^N\|X_i^N(t)\|\lambda(X^N_i(t))dt\right]\\
+ \Mean\left[\mathds{1}_{E^C_{N}}\int_0^T\frac{1}{N}\sum_{i=1}^N\|Y_i^N(t)\|\lambda(Y^N_i(t))dt\right]+ \Mean[\|U\|]\Mean\left[\mathds{1}_{E^C_{N}}\int_0^T\frac{1}{N}\sum_{i=1}^N\lambda(X^N_i(t))dt\right]\\
+  \Mean[\|U\|]\Mean\left[\mathds{1}_{E^C_{N}}\int_0^T\frac{1}{N}\sum_{i=1}^N\lambda(Y^N_i(t))dt\right].
\end{multline*}}
As before, we wish to get a bound of the order $\displaystyle{O\left(\frac{1}{\sqrt{N}}\right)}$ for the last terms. We do that by means of Cauchy-Schwartz inequality, Lemma~\ref{bound_per_<mu,f^5>} and Lemma~\ref{bound_jumps}. We also exploit that, by definition of  $\lambda$ , it holds $\|x\|\leq B\lambda(x)+c$ for a positive constant $B$ and a constant $c$. Take, for instance, the second term of the right-hand side, it holds
\begin{multline*}
\Mean\left[\mathds{1}_{E^C_{N}}\int_0^T\frac{1}{N}\sum_{i=1}^N\|X_i^N(t)\|\lambda(X^N_i(t))dt\right]\leq \int_0^T\sqrt{\Prb(E^C_N)}\sqrt{\Mean\left[\left(\frac{1}{N}\sum_{i=1}^N\|X^N_i(s)\|\lambda(X^N_i(s)) \right)^2\right]} ds\\
\leq T \sqrt{\Prb(E^C_N)}\sqrt{ \Mean\left[\sup_{t\in [0,T]}\langle \mu^N_{X}(t),\|\cdot\|^2\rangle\langle \mu^N_{X}(t),\lambda(\cdot)^2\rangle\right]}\leq  T \sqrt{\Prb(E^C_N)}\sqrt{ \Mean\left[\sup_{t\in [0,T]}\langle \mu^N_{X}(t),\lambda(\cdot)^4\rangle\right]}. 
\end{multline*} 
The same holds for the remaining right-hand side terms. Thus, there exists two constants $\tilde C_T$ and $\tilde K_T$  and a $N_0>0$, such that for all $N>N_0$ it holds
 \begin{equation*}
\frac{1}{N}\sum_{i=1}^NU_{X^N_i,Y^N_i}(T)\leq \tilde C_T\int_0^T\Mean\left[\sup_{s\in[0,t]}\|X^N(s)-Y^N(s)\|\right] dt+\frac{\tilde K_T}{\sqrt{N}}.
\end{equation*}

Then, there exist two constants, that with abuse of notation we will indicate as $C_T$ and $K_T$, depending only on $T$, and $N_0>$ such that, for all $N>N_0$ it holds
\begin{equation*}
\Mean\left[\sup_{t\in[0,T]}\|X^N(t)-Y^N(t)\|\right]\leq C_T \int_0^T \Mean\left[\sup_{s\in[0,t]}\|X^N(s)-Y^N(s)\|\right]dt +\frac{K_T}{\sqrt{N}}.
\end{equation*}  
By applying Gronwall lemma we get the thesis.
\end{proof}

\begin{theorem}[Propagation of Chaos for $Y^N$]

Grant Assumptions~\ref{ass} and \ref{ASS_COLL_JUMPS}. 
Let $\mu$ be a probability measure on $\R^d$  with compact support. For $N\in \mathbb{N}$, let $Y^N$ be a solution of Eq.~\eqref{SDE_YN_RATE_NONSTANDARD} in $[0,T]$. Assume that $Y^N(0) = (Y^N_1(0), \ldots, Y^N_N(0))$, $N\in \mathbb{N}$, form a sequence of  integrable random vectors that is $\mu$-chaotic in $W_1$. Let $Q$ be the law of the solution of Eq.~\eqref{SDE_LIMITE_NONSTANDARD_RATE} in $[0,T]$ with initial law $\Prb \circ X(0)^{-1} = \mu$. Then $Y^N$ is $Q$ chaotic in $W_1$.

\end{theorem}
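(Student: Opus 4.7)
The plan is to mimic the coupling strategy used in Proposition~\ref{intermedio_limite} and Proposition~\ref{intermedio_limite_NONSTANDARD_DRIFT}, but with the technical modifications developed in Theorem~\ref{iniziale_intermedio_rate} to cope with the superlinear jump rate. On the given stochastic basis, for each $N$, $i \in \{1,\dots,N\}$, I would let $\bar X^N_i$ be the unique strong solution of the nonlinear \mbox{SDE}~\eqref{SDE_LIMITE_NONSTANDARD_RATE} with initial condition $\bar X^N_i(0)$ drawn i.i.d.\ from $\mu$ (coupled to $Y^N_i(0)$ so that $\phi^N \doteq \Mean[\|\bar X^N_i(0)-Y^N_i(0)\|]\to 0$) and driven by the same Poisson random measure $\mathcal{N}^i$ as $Y^N_i$. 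By exchangeability and the definition of $\rho_T$, it suffices to prove $\Mean[\sup_{t\in[0,T]}\|Y^N_i(t)-\bar X^N_i(t)\|]\to 0$.

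The quantity $\Mean[\sup_{t\in[0,T]}\|Y^N_i(t)-\bar X^N_i(t)\|]$ splits, via the two \mbox{SDEs}, into: (a) the linear drift term $\int_0^t (X^N_i - \bar X^N_i)ds$, handled by Gronwall; (b) a mean-field drift difference $\frac{\Mean[V]}{N}\sum_j \lambda(Y^N_j(s))\,ds - \Mean[V]\,\Mean[\lambda(\bar X(s))]\,ds$; and (c) the main jump integral, which is a $\mathcal{N}^i$-integral of the difference of the integrands of the two equations. The key obstacle is that $\lambda$ is only locally Lipschitz, so I would import the good event $E_N$ from Lemma~\ref{bound_jumps}, on which $\sup_{t\in[0,T]}\lambda(Y^N_i(t))$ and $\sup_{t\in[0,T]}\lambda(\bar X^N_i(t))$ are bounded by a constant $H_T$ (the latter after noting that the a priori bounds of Lemma~\ref{bound_a_priori}, combined with the compact-support assumption on $\mu_0$, provide analogous uniform control for the i.i.d.\ nonlinear copies). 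On $E_N$, the local Lipschitz constant $L_{H_T}$ of $\lambda$ may be used; on $E_N^c$ one uses Cauchy-Schwarz together with the moment bounds of Lemma~\ref{bound_per_<mu,f^5>} and the estimate $\Prb(E_N^c)\le K_T/N$ to show the contribution is $O(1/\sqrt N)$.

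After these estimates, term (b) splits further by triangle inequality into
\[
\Bigl\|\frac{1}{N}\sum_j \lambda(Y^N_j(s)) - \frac{1}{N}\sum_j \lambda(\bar X^N_j(s))\Bigr\| + \Bigl\|\frac{1}{N}\sum_j \lambda(\bar X^N_j(s)) - \Mean[\lambda(\bar X(s))]\Bigr\|;
\]
the first summand is controlled on $E_N$ by $L_{H_T} \frac{1}{N}\sum_j \|Y^N_j - \bar X^N_j\|$, and the second is the deviation of the empirical mean of the i.i.d.\ bounded (on $E_N$) quantities $\lambda(\bar X^N_j(s))$ from its expectation, which is $O(1/\sqrt N)$ by a standard $L^2$ variance estimate. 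A parallel analysis handles the main jump term (c) using the local Lipschitzianity of $\lambda$, the boundedness of $U$, and the positivity of the Poisson integral as in $\bar\psi$ of Proposition~\ref{intermedio_limite}; the collateral jumps have already been absorbed into the drift of~\eqref{SDE_YN_RATE_NONSTANDARD}, so no martingale-fluctuation term appears there.

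Summing over $i$, dividing by $N$, and exploiting exchangeability, I would arrive at an inequality of the form
\begin{equation*}
\frac{1}{N}\sum_{i=1}^N \Mean\Bigl[\sup_{s\in[0,t]}\|Y^N_i(s)-\bar X^N_i(s)\|\Bigr] \leq C_T \int_0^t \frac{1}{N}\sum_{i=1}^N \Mean\Bigl[\sup_{s\in[0,r]}\|Y^N_i(s)-\bar X^N_i(s)\|\Bigr]dr + \phi^N + \frac{K_T}{\sqrt N},
\end{equation*}
valid on any interval $[0,T_0]$ with $T_0$ small enough that the coefficient of the "fluctuation" term can be absorbed. Gronwall's lemma yields the bound $C_T(\phi^N + 1/\sqrt N)$ on $[0,T_0]$, and the usual iteration over finitely many subintervals extends the estimate to $[0,T]$. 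This proves $\mu$-chaoticity of $Y^N$ in $W_1$, with an explicit rate $O(1/\sqrt N)$ (up to the $\phi^N$ contribution from the initial data). The main obstacle throughout is the non-Lipschitz rate $\lambda$; handling it hinges crucially on the event $E_N$ and the moment bounds of Lemmas~\ref{bound_per_<mu,f^5>}--\ref{bound_jumps}, which allow the standard Lipschitz machinery to be applied on $E_N$ while the complement $E_N^c$ contributes only a negligible $O(1/\sqrt N)$ error.
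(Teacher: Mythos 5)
Your proposal is correct and lines up with what the paper intends, which the authors describe only as a one-line combination of ``the computations done for proving Theorem~\ref{iniziale_intermedio_rate} and the coupling techniques for propagation of chaos used in the previous sections.'' The coupling to i.i.d.\ nonlinear copies $\bar X^N_i$ driven by the same $\mathcal{N}^i$, and the localisation on the event $E_N$ from Lemma~\ref{bound_jumps} so that local Lipschitzness of $\lambda$ applies while $E_N^c$ contributes $O(1/\sqrt N)$ via Cauchy--Schwarz and the moment bounds, are exactly the right ingredients.

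Two comments on your treatment of the mean-field drift. First, for the i.i.d.\ copies $\bar X^N_j$ you do not need Lemma~\ref{bound_per_<mu,f^5>} at all: since $\mu_0$ has compact support and $\sup_t\Mean[\lambda(\bar X(t))]<\infty$ by Lemma~\ref{bound_a_priori}, the ODE for $\|\bar X\|$ gives a \emph{deterministic} bound on $\|\bar X(t)\|$ on $[0,T]$, so $\lambda(\bar X^N_j(s))$ is a.s.\ bounded independently of $E_N$, which makes the variance estimate clean. Second, your direct $L^2$ variance estimate for $\frac1N\sum_j\lambda(\bar X^N_j(s))-\Mean[\lambda(\bar X(s))]$ is a genuine, and desirable, deviation from the arguments in Propositions~\ref{intermedio_limite} and~\ref{intermedio_limite_NONSTANDARD_DRIFT}, which pass through $\rho(\mu^N_{\bar X}(t),\mu_t)$ and cite \cite{FoGu15}: since the measure enters the coefficients of \eqref{SDE_LIMITE_NONSTANDARD_RATE} only through the scalar $\Mean[\lambda(X(t))]$, the law-of-large-numbers estimate for a single bounded function suffices and gives a dimension-free $O(1/\sqrt N)$ rate, consistent with the $O(1/\sqrt N)$ claim in the remark preceding the theorem (whereas the full $W_1$ rate of \cite{FoGu15} would carry logarithmic corrections in $d=1,2$). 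One minor inefficiency: the $T_0$-small-plus-iteration device is not needed in this section --- there is no Brownian term and hence no Burkholder--Davis--Gundy $\sqrt{t}$ factor to absorb, so Gronwall's lemma applies directly on $[0,T]$, exactly as in the proof of Theorem~\ref{iniziale_intermedio_rate}.
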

The proof of this theorem is a combination of the computations done for proving Theorem~\ref{iniziale_intermedio_rate} and the coupling techniques for propagation of chaos used in the previous sections. Again, this implies propagation of chaos for $X^N$.
\begin{corollary}[Propagation of Chaos for $X^N$]

Grant Assumptions~\ref{ass} and \ref{ASS_COLL_JUMPS}. 
Let $\mu$ be a probability measure on $\R^d$  with compact support. For $N\in \mathbb{N}$, let $X^N$ be a solution of Eq.~\eqref{SDE_XN_RATE_NONSTANDARD} in $[0,T]$. Assume that $X^N(0) = (X^N_1(0), \ldots, X^N_N(0))$, $N\in \mathbb{N}$, form a sequence of  integrable random vectors that is $\mu$-chaotic in $W_1$. Let $Q$ be the law of the solution of Eq.~\eqref{SDE_LIMITE_NONSTANDARD_RATE} in $[0,T]$ with initial law $\Prb \circ X(0)^{-1} = \mu$. Then $X^N$ is $Q$ chaotic in $W_1$.

\end{corollary}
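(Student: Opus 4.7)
The plan is to deduce the corollary by combining the two preceding results, namely Theorem~\ref{iniziale_intermedio_rate} (which controls $X^N - Y^N$) and the Theorem on propagation of chaos for $Y^N$ (which controls the distance between $Y^N$ and independent copies of the McKean--Vlasov limit), via a standard triangle-inequality argument in the $W_1$ Wasserstein metric on path space $\mathbf{D}([0,T],\mathbb{R}^d)$. Since the corollary's statement mirrors the one in Proposition~\ref{intermedio_limite} and Corollary~\ref{propch_limite} from Section~\ref{Lipschitz_cond}, the overall strategy is identical to that case.

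First, I would fix a filtered probability space carrying the independent family $(\mathcal{N}^i)_{i\in\mathbb{N}}$ of Poisson random measures, and realize simultaneously on it: the solution $X^N$ to \eqref{SDE_XN_RATE_NONSTANDARD}, the intermediate process $Y^N$ solving \eqref{SDE_YN_RATE_NONSTANDARD} driven by the \emph{same} Poisson random measures and starting from the \emph{same} initial condition as $X^N$, and a triangular array $(\bar X^N_i)_{i=1,\dots,N}$ of i.i.d.\ solutions of the McKean--Vlasov equation \eqref{SDE_LIMITE_NONSTANDARD_RATE}, each $\bar X^N_i$ driven by $\mathcal{N}^i$ and starting from an initial value coupled optimally to $Y^N_i(0)$ so as to witness the $\mu$-chaoticity in $W_1$ of the initial data.

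Next, for any fixed $k \in \mathbb{N}$, I would use the triangle inequality for $\rho_T$:
\begin{equation*}
\rho_T\!\left(\mathrm{Law}(X^N_1[0,T],\dots,X^N_k[0,T]),\,Q^{\otimes k}\right) \leq \rho_T\!\left(\mathrm{Law}(X^N_{[k]}),\,\mathrm{Law}(Y^N_{[k]})\right) + \rho_T\!\left(\mathrm{Law}(Y^N_{[k]}),\,Q^{\otimes k}\right),
\end{equation*}
where $X^N_{[k]} \doteq (X^N_1[0,T],\dots,X^N_k[0,T])$ and analogously for $Y^N$. The first term is bounded above by $\sum_{i=1}^k \mathbf{E}[\sup_{t\in[0,T]}\|X^N_i(t)-Y^N_i(t)\|]$, which tends to zero as $N\to\infty$ by Theorem~\ref{iniziale_intermedio_rate} (after checking that the hypotheses transfer: the initial conditions are $\mu_0$-chaotic with $\mu_0$ compactly supported, so in particular integrable, and Assumption~\ref{ASS_COLL_JUMPS} holds). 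The second term tends to zero by the preceding Theorem (propagation of chaos for $Y^N$), since the coupled initial values $(Y^N_i(0),\bar X^N_i(0))$ realize the $\mu$-chaoticity.

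There is no substantial obstacle here: the work has already been carried out in the two preceding results, and what remains is purely a bookkeeping application of the triangle inequality together with the standard observation that convergence of the first $k$ marginals in $\rho_T$ for every $k$ is precisely the definition of $Q$-chaoticity in $W_1$ given in Definition~\ref{def:ch}. The only minor point worth a line of verification is that the permutation invariance required by Definition~\ref{def:ch} is preserved by the couplings chosen above, which follows from the symmetry of the dynamics of both $X^N$ and $Y^N$ and the i.i.d.\ structure of $\bar X^N$.
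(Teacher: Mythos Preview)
Your proposal is correct and follows exactly the route the paper takes: the corollary is stated as an immediate consequence of Theorem~\ref{iniziale_intermedio_rate} and the preceding Theorem on propagation of chaos for $Y^N$, combined via the triangle inequality in $\rho_T$. The paper gives no explicit proof beyond the remark ``Again, this implies propagation of chaos for $X^N$,'' so your write-up is, if anything, more detailed than the original.
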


\appendix
\section{Appendix for Section~\ref{non_lip_drift}}\label{app_A}

We gather in this subsection some lemmas useful to the analysis of the nonlinear stochastic differential equation in the case of the non-globally Lipschitz drift condition stated in (U). These two lemmas involve standard results and are both used for the Picard-iteration procedure in the proof of Theorem~\ref{existence_uniqueness_McKeanVlasov_NONSTANDARD_DRIFT}.
We could not find a general result on \mbox{SDE} with unbounded jump's rate and a non globally Lipschitz condition on the drift coefficient, so we prove it here. It is an application of classical approach, see for example Ikeda Watanabe \cite{IkWa14}, together with the trick used in the proof of Theorem~\ref{existence_uniqueness_McKeanVlasov_NONSTANDARD_DRIFT}.

\begin{lemma}\label{lemma_SDE_parametrizzata}
Consider the \mbox{SDE} parametrized by a measure $\alpha$ $\in$ $\mathcal{M}^{1}(\mathbf{D}([0,T],\mathbb{R}^d))$
\begin{multline}\label{SDE_parametrizzata}
	dX(t)=F(X(t),\alpha_t)dt + \sigma(X(t),\alpha_t)dB_t\\
	+ \int_{[0,\infty)\times[0,1]^{\mathbb{N}}} \psi(X(t^-),\alpha_{t^-},h_1)\mathds{1}_{(0,\lambda(X(t^-),\alpha_{t^-})]}(u) \mathcal{N}(dt,du,dh).
\end{multline}
If the coefficients satisfy Assumption~\ref{ASS_NL2}, then for every {\small$\alpha$ $\in$ $\mathcal{M}^1(\mathbf{D}([0,T],\mathbb{R}^d))$} and every square-integrable initial condition, there exists a unique strong solution to Eq.~\eqref{SDE_parametrizzata}.
\end{lemma}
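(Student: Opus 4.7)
The plan is to exploit the fact that, with the path-measure $\alpha$ frozen, \eqref{SDE_parametrizzata} becomes a time-inhomogeneous Markovian SDE with jumps whose coefficients are globally Lipschitz in the state variable by (LD) and (L1), save for the drift part $-\triangledown U$, which is only continuous but nonetheless monotone by convexity of $U$. Thus the argument splits along the usual lines: existence by truncation combined with a non-explosion estimate that uses convexity as a Lyapunov structure, and uniqueness by the $L^{1}$ Itô trick already employed in the proof of Theorem~\ref{existence_uniqueness_McKeanVlasov_NONSTANDARD_DRIFT}, simplified by the fact that the measure argument is now the same on both sides.

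For existence I would first truncate the potential gradient. For each $n\in\mathbb{N}$ pick a smooth cutoff $\varphi_{n}$ with $\varphi_{n}\equiv 1$ on $\{\|x\|\leq n\}$ and $\supp\varphi_{n}\subset\{\|x\|\leq n+1\}$, and consider the auxiliary drift $F^{n}(x,\alpha_{t})\doteq -\varphi_{n}(x)\triangledown U(x)+b(x,\alpha_{t})$, which is globally Lipschitz in $x$ uniformly in $t$. The modified SDE then falls under the classical Lipschitz framework (Theorem~1.2 of~\cite{Gra92} with the measure frozen, or equivalently the construction in Ikeda--Watanabe~\cite{IkWa14}), so it admits a unique square-integrable strong solution $X^{n}$. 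Since the truncated drifts agree on $\{\|x\|\leq n\}$, pathwise uniqueness gives $X^{m}=X^{n}$ a.s.\ on $[0,\tau_{n})$ for $m\geq n$, with $\tau_{n}\doteq\inf\{t:\|X^{n}(t)\|\geq n\}$, and one defines a candidate solution $X$ by pasting these processes together.

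The key technical step is then the non-explosion bound $\tau_{n}\to\infty$ a.s. I would apply Itô's formula to $f^{\epsilon}(X^{n}(t))$, with $f^{\epsilon}$ the smooth approximation of the norm introduced in Theorem~\ref{existence_uniqueness_McKeanVlasov_NONSTANDARD_DRIFT}, take expectations, and let $\epsilon\downarrow 0$. Convexity of $U$ yields
\[
\triangledown U(x)\cdot x \geq U(x)-U(0),
\]
and, assuming $U\geq 0$ without loss of generality, this makes the contribution $-\triangledown U(x)\cdot x/\|x\|$ bounded above by $U(0)/\|x\|$, harmless for $\|x\|\geq\epsilon$ (and only smaller after multiplying by $\varphi_{n}$). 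The $b$ and $\sigma$ contributions produce linear-growth terms via (U) and (LD), and the jump contribution is controlled in $L^{1}$ by choosing $y=0$ in the first inequality of~(L1) together with the integrability condition~(I). A single application of Gronwall's lemma gives $\sup_{t\in[0,T]}\Mean[\|X^{n}(t)\|]\leq C_{T}$ uniformly in $n$, hence $\Prb(\tau_{n}\leq T)\to 0$, and $X$ is the required non-exploding strong solution on $[0,T]$; square-integrability of $X(t)$ is obtained analogously by repeating the Itô argument with the genuinely smooth choice $x\mapsto\|x\|^{2}$, now using the $L^{2}$-form of the jump estimate coming from the global Lipschitz bound on $\psi$.

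For uniqueness, let $X^{1},X^{2}$ be two strong solutions driven by the same $(B,\mathcal{N})$ with the same initial condition. I would follow verbatim the computation from the proof of Theorem~\ref{existence_uniqueness_McKeanVlasov_NONSTANDARD_DRIFT} applied to $f^{\epsilon}(X^{1}(t)-X^{2}(t))$: the analogue of $u_{\epsilon}[t_{0},t_{1}]$ is non-positive by convexity of $U$, while the analogues of $b_{\epsilon},\sigma_{\epsilon},\Sigma_{\epsilon},\Lambda_{\epsilon}$ are controlled by the global Lipschitz constants of $b$ and $\sigma$ and the constant $L$ of~(L1); crucially, all terms of the form $\rho_{[t_{0},t_{1}]}(P^{1},P^{2})$ vanish, since the measure argument is the common fixed $\alpha$. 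Gronwall on a small interval $[0,T_{0}]$, iterated finitely many times to cover $[0,T]$, yields $X^{1}\equiv X^{2}$ a.s.

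The main obstacle is the non-explosion bound, because the combination of an unbounded jump rate $\lambda$ (allowed by~(I)) with a merely continuous potential gradient $-\triangledown U$ is not directly covered by standard existence theorems for jump-diffusions; convexity of $U$, through the one-sided inequality above, is exactly what is needed to close the $L^{1}$ Gronwall argument without invoking the square-integrability condition~(I2), consistently with the $L^{1}$ framework emphasized throughout the paper.
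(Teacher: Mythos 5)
Your route---truncate $\nabla U$, solve the truncated SDE by Lipschitz theory, paste along stopping times, and rule out explosion by an $L^1$ Lyapunov bound---is genuinely different from the paper's, which reduces to Lemma~\ref{lemma_SDE_parametrizzata_2} (the equation in which the jump rate and amplitude are driven by a \emph{known} process $Y$, so the jump times are determined in advance and one can interlace: solve the continuous SDE between jumps via Hasminskii's test with $V(z)=\|z\|^2$, then apply the jump) and then runs a Picard iteration $X^k\mapsto Y^k$ in which the jump terms use the old iterate $X^k$ but the drift uses the new iterate $Y^k$; as the remark after the lemma stresses, this asymmetry is what lets the difference $Y^1-Y^2$ still see $\nabla U(Y^1)-\nabla U(Y^2)$ and the monotone structure close the $f^\epsilon$-Gronwall argument. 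Your proposal avoids that two-step construction, but it does not get off the ground under the stated hypotheses. The drift $F^n(x,\alpha_t)=-\varphi_n(x)\nabla U(x)+b(x,\alpha_t)$ is \emph{not} globally Lipschitz in general: Assumption~(U) only asks $U\in\mathcal{C}^1$, so $\nabla U$ need not be locally Lipschitz (e.g.\ $U(x)=|x|^{3/2}$), and $\varphi_n\nabla U$ is then continuous with compact support but not Lipschitz. Moreover the cutoff destroys the monotonicity of $-\nabla U$ (for $U(x)=\|x\|^2/2$ the map $x\mapsto -\varphi_n(x)x$ is increasing on the shell $n<\|x\|<n+1$), so the truncated SDE is covered neither by the Lipschitz theorems you invoke (Graham, Ikeda--Watanabe) nor by the monotone-coefficient framework, and the strong existence and pathwise uniqueness of $X^n$---on which both your pasting step and your non-explosion step rely---is not justified. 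This is exactly the obstruction the paper's interlacement is built to avoid: it never truncates the potential, only decouples the jump times.

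There is a second, smaller issue. In the non-explosion estimate you apply It\^o to $f^\epsilon(X^n(t))$ and attribute ``linear-growth'' behavior to the $\sigma$ contribution, but the second-order term $\tfrac12\int_0^t \mathrm{tr}\bigl(\sigma^T H_{f^\epsilon}(X^n(s))\sigma\bigr)\,ds$ has $H_{f^\epsilon}$ of order $1/\max(\|X^n\|,\epsilon)$ while $\sigma=\sigma(X^n(s),\alpha_s)$ does not vanish near the origin. Unlike the uniqueness estimate, where the integrand involves the difference $\sigma(X^1)-\sigma(X^2)$ and the Lipschitz property of $\sigma$ cancels the $1/\|X^1-X^2\|$ singularity, here the term is genuinely singular and cannot be absorbed by (LD). The paper sidesteps this for the continuous part by taking $V(z)=\|z\|^2$ (bounded Hessian) in the non-explosion test, reserving the $f^\epsilon$ device for comparing two solutions. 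Finally, the passage from $\sup_{t\le T}\Mean[\|X^n(t)\|]\le C_T$ to $\Prb(\tau_n\le T)\to 0$ should go through the stopped process, i.e.\ $n\,\Prb(\tau_n\le T)\le\Mean\bigl[\|X^n(\tau_n\wedge T)\|\bigr]$, since a pointwise-in-$t$ first-moment bound by itself does not control $\sup_t\|X^n(t)\|$.
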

\begin{proof}
First let $X^1$ and $X^2$ be two integrable stochastic processes on $[0,T]$  with values in $\mathbb{R}^d$. We define the map that associates the law of $X^k$ to the law of the solution of 
\begin{multline}\label{ODE_lineare_drift}
	dY^k(t)=F(Y^k(t),\alpha_t)dt + \sigma(Y^k(t),\alpha_t)dB_t\\
	+ \int_{[0,\infty)\times[0,1]^{\mathbb{N}}} \psi(X^k(t^-),\alpha_{t^-},h_1)\mathds{1}_{(0,\lambda(X^k(t^-),\alpha_{t^-})]}(u) \mathcal{N}(dt,du,dh),
\end{multline}
that is well-defined for Lemma~\ref{lemma_SDE_parametrizzata_2}. With the same computation of the proof of Theorem~\ref{existence_uniqueness_McKeanVlasov_NONSTANDARD_DRIFT}, we get that, for a small enough $T_0>0$, there exists a constant $C_{T_0}<1$ such that $$
\Mean\left[\sup_{t\in [0,T_0]}\|Y^1(t)-Y^2(t)\|\right]\leq C_{T_0}\Mean\left[\sup_{t\in [0,T_0]}\|X^1(t)-X^2(t)\|\right].
$$
This shows \emph{pathwise uniqueness} for solution of \eqref{SDE_parametrizzata}. By means of \eqref{ODE_lineare_drift}, we define a Picard iteration argument that gives a sequence of laws $\left\{Q^n\right\}_{n\in\mathbb{N}}$ on $\mathbf{D}([0,T],\mathbb{R}^d)$. Again, there exists a $T_0>0$ small enough such that $\left\{Q^n\right\}_{n\in\mathbb{N}}$ is a Cauchy sequence for $\rho_{T_0}$ and hence for a weaker but complete Wasserstein metric on $\mathcal{M}^1(\mathbf{D}([0,T_0],\mathbb{R}^d))$. Iterating the procedure over a finite number of time intervals, to cover $[0,T]$, yields the thesis.

The integrability property is proved as in the proof of Lemma~\ref{lemma_SDE_parametrizzata_2}.
\end{proof}

 \begin{remark} Notice that, in the proof of Lemma~\ref{lemma_SDE_parametrizzata}, we need to define the map by means of \eqref{ODE_lineare_drift} and not to straightly substituting $X^k$ in the whole right-hand side of \eqref{SDE_parametrizzata}. In fact, we need to control the jumps by means of a known process, but at the same time, we need to have the same variable as argument of the drift coefficient to exploit the convexity of the potential function $U$. 
 \end{remark}

\begin{lemma}\label{lemma_SDE_parametrizzata_2}
Consider the \mbox{SDE} parametrized by two measures $\alpha$ and $\beta$ $\in$ $\mathcal{M}(\mathbf{D}([0,T],\mathbb{R}^d))$
\begin{multline}\label{SDE_parametrizzata_2}
	dX(t)=F(X(t),\alpha_t)dt + \sigma(X(t^-),\alpha_t)dB_t\\
	+ \int_{[0,\infty)\times[0,1]^{\mathbb{N}}} \psi(Y(t^-),\alpha_{t^-},h_1)\mathds{1}_{(0,\lambda(Y(t^-),\alpha_{t^-})]}(u) \mathcal{N}(dt,du,dh),
\end{multline}
with $Law(Y)=\beta$.
If the coefficients satisfy Assumption~\ref{ASS_NL2}, then for every square-integrable initial condition and every {\small$\alpha$ and $\beta$ $\in$ $\mathcal{M}^1(\mathbf{D}([0,T],\mathbb{R}^d))$} , there exists a unique strong solution to Eq.~\eqref{SDE_parametrizzata_2}.\\

Moreover, let $\mu\doteq Law((X(t))_{t\in[0,T]})$ be the law of the solution of \eqref{SDE_parametrizzata_2} starting from the square-integrable initial condition $X(0)$ $\mu_0$-distributed, then $\mu$~$\in$~$\mathcal{M}^1(\mathbf{D}([0,T],\mathbb{R}^d))$. 
\end{lemma}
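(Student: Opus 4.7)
The plan hinges on the observation that, unlike in \eqref{Mckean-vlasov_non_lipschitz_uniqueness}, the jump integrand in \eqref{SDE_parametrizzata_2} involves only the fixed process $Y$ (of law $\beta$), the fixed parametric measure $\alpha$, and the Poisson random measure $\mathcal{N}$; it is independent of the unknown $X$. Hence
$$J(t) \doteq \int_{[0,t]\times[0,\infty)\times[0,1]^{\mathbb{N}}} \psi(Y(s^-),\alpha_{s^-},h_1)\mathds{1}_{(0,\lambda(Y(s^-),\alpha_{s^-})]}(u)\mathcal{N}(ds,du,dh)$$
is a well-defined c{\`a}dl{\`a}g forcing term that can be constructed before solving for $X$. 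Applying (L1) at the reference point $(0,\delta_0)$ bounds the $L^1$-norm of the integrand by $\bar L(\|Y(s^-)\|+\rho(\alpha_{s^-},\delta_0))$ plus a finite constant; since $\alpha,\beta\in\mathcal{M}^1$, this gives $\Mean[\sup_{t\in[0,T]}\|J(t)\|]<+\infty$. Setting $\tilde X(t)\doteq X(t)-J(t)$, solving \eqref{SDE_parametrizzata_2} is equivalent to solving the continuous SDE
$$d\tilde X(t)=F(\tilde X(t)+J(t),\alpha_t)dt+\sigma(\tilde X(t)+J(t),\alpha_t)dB_t,$$
whose drift satisfies the one-sided Lipschitz condition $(F(x,\alpha)-F(y,\alpha))\cdot(x-y)\leq L\|x-y\|^2$ thanks to the monotonicity of $-\nabla U$ (from convexity of $U$) and the global Lipschitz property of $b$, while the diffusion is Lipschitz by (LD).

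For uniqueness, given two solutions $X^1,X^2$ driven by the same $(B,\mathcal{N},X(0))$, the jump contributions cancel identically because the jump term is not a function of $X$. I then mimic the $f^\epsilon$-argument of Theorem~\ref{existence_uniqueness_McKeanVlasov_NONSTANDARD_DRIFT}: applying It{\^o} to $f^\epsilon(X^1(t)-X^2(t))$ and letting $\epsilon\downarrow 0$ yields a Gronwall inequality in which the convex-potential term is non-positive in the limit, the $b$-term is linear in $\Mean[\sup\|X^1-X^2\|]$ by Lipschitzianity of $b$, and the $\sigma$-term is handled by (LD) and Burkholder-Davis-Gundy exactly as in the main text; iteration over finitely many intervals then yields pathwise uniqueness on all of $[0,T]$.

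For existence I regularize the convex potential by its Moreau-Yosida envelope $U_\delta$, whose gradient is globally $1/\delta$-Lipschitz; the corresponding equation has fully Lipschitz drift and diffusion (with time-dependence entering measurably through $\alpha_t$) plus the forcing term $J$, and admits a unique strong solution $X^\delta$ by classical Picard iteration. An a priori $L^2$ estimate on $\tilde X^\delta=X^\delta-J$, obtained by applying It{\^o} to $\|\tilde X^\delta\|^2$ and using the convexity inequality $-\nabla U_\delta(x)\cdot x\leq \|\nabla U(0)\|\|x\|$ together with the Lipschitz growth of $b,\sigma$ and the integrability of $J$, gives $\Mean[\sup_{t\in[0,T]}\|X^\delta(t)\|]$ uniform in $\delta$; passed to the limit via Fatou this also establishes the ``moreover'' claim $\mu\in\mathcal{M}^1(\mathbf{D}([0,T],\mathbb{R}^d))$. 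The main obstacle is the passage to the limit $\delta\downarrow 0$: since the Lipschitz constant of $\nabla U_\delta$ diverges, naive Gronwall fails, so I would show that $\{X^\delta\}_{\delta>0}$ is Cauchy in $L^1(\Omega;\mathbf{D}([0,T],\mathbb{R}^d))$ by combining the classical monotonicity identity for Yosida resolvents with a uniform bound on $\Mean\int_0^T\|\nabla U_\delta(X^\delta(s))\|^2\,ds$, itself obtainable by applying It{\^o} to $U_\delta(\tilde X^\delta)$ and exploiting the coercivity of $U_\delta$.
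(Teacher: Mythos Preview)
Your proposal is correct in outline but follows a genuinely different route from the paper. Both proofs start from the same key observation---that the jump integrand involves only $Y$, $\alpha$, and $\mathcal{N}$, not the unknown $X$---but exploit it differently.

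The paper does not subtract $J$ or regularize $U$. Instead it uses an \emph{interlacing} construction: since the jump times $\sigma_1<\sigma_2<\dots$ are determined entirely by $Y$ and $\mathcal{N}$, one first shows $\sigma_n\to\infty$ a.s.\ (via a separate lemma bounding $\Prb(\sigma_n\le T)\le C_T/n$), then solves the purely continuous SDE $dZ=F(Z,\alpha_t)dt+\sigma(Z,\alpha_t)dB_t$ on each random interval $[\sigma_{n-1},\sigma_n)$, and inserts the prescribed jump $\psi(Y(\sigma_n^-),\alpha_{\sigma_n^-},\cdot)$ at $\sigma_n$. Existence and uniqueness for the continuous piece come directly from Hasminskii's non-explosion test with Lyapunov function $V(z)=\|z\|^2$, using only $z\cdot F(z,\alpha)\le C(1+\|z\|^2)$ (a consequence of convexity of $U$ and linear growth of $b$). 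No Moreau--Yosida envelope, no Cauchy-in-$\delta$ argument, and no bound on $\int\|\nabla U_\delta\|^2$ are needed.

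Your approach---reducing to a continuous SDE for $\tilde X=X-J$ and then approximating $U$ by $U_\delta$---is sound, and your uniqueness argument (jumps cancel, then $f^\epsilon$ plus Gronwall) is in fact slightly cleaner than the paper's setup. The cost is that the existence step you flag as ``the main obstacle'' really is one: proving $\{X^\delta\}$ is Cauchy via resolvent monotonicity and an energy bound on $\nabla U_\delta(X^\delta)$ is considerably heavier than the paper's interlacing, which sidesteps the non-Lipschitz drift altogether between jumps. For the ``moreover'' integrability claim, both arguments end with an It\^o/Gronwall estimate; the paper applies $f^\epsilon$ directly to the solution, while you obtain it as a by-product of the uniform-in-$\delta$ bound.
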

\begin{proof}
Let $B$ be an $(\mathcal{F}_t)$-brownian motion, $p$ be a $(\mathcal{F}_t)$-stationary Poisson point process with characteristic measure $ l \times\nu$ and $\xi$ be a $\mathcal{F}_0$-measurable square-integrable r.v.. Let $D\doteq\{s\in D_p \text{ s.t. } p(s) \, \in\, \bar U_s=(0,\lambda(Y(s^-),\alpha_{s^-})]\times[0,1]\times[0,1]\times\dots  \}$. Let us call $\sigma_1<\sigma_2<\dots$ the elements of $D$. Each $\sigma_n$ is an $\mathcal{F}_t$~-stopping time  and $\lim_{n\rightarrow\infty}\sigma_n=\infty$ a.s.. Indeed, for every $T>0$ and for a fixed $n\in\mathbb{N}^*$,
{\small\begin{equation*}
\Prb(\sigma_n\leq T)=\Prb\left(\int_0^T\int_{[0,\infty)\times[0,1]^{\mathbb{N}}} \mathds{1}_{(0,\lambda(Y(t^-),\alpha_{t^-})]}(u) \mathcal{N}(du,dh,dt)\geq n\right)\leq \frac{\Mean\left[\lambda(Y(T),\alpha_T)\right]}{n}\leq\frac{C_T}{n},
\end{equation*} }
for a certain constant $C_T$. By Lemma~\ref{lemma_jumps}, we get the claim.  Then we start by showing $\exists !$ of a solution for \eqref{SDE_parametrizzata_2} on $[0,\sigma_1]$. Consider the equation
\begin{equation}\label{nojumps_2}
Z(t)=X(0)+\int_0^tF(Z(s),\alpha_s)ds+\int_0^t\sigma(Z(s^-),\alpha_s)dB_s.
\end{equation}
Existence and uniqueness of a strong solution for \eqref{nojumps_2} are ensured by the classical Hasminskii's test for non-explosion (see e.g. \cite{mckean1969} with the Lyapunov function $V(z) = \|z\|^2$). The test's conditions are guaranteed by the inequality
\begin{equation}\label{cond_non_lipschitz_2}
\sup_{\alpha\in\mathcal{M}^1(\mathbb{R}^d)}z\cdot F(z,\alpha)+tr(\sigma(z,\alpha)\sigma^T(z,\alpha))
\leq C(1+\|z\|^2),
\end{equation}
for some $C>0$, for all $z$ $\in$ $\mathbb{R}^d$.
 Indeed, fix an $\alpha$ $\in$ $\mathcal{M}^1(\mathbb{R}^d)$. Then, under $(U)$ from Assumption~\ref{ASS_NL2}, we have
 \[
 z\cdot F(z,\alpha)=-(z-\underline{0})\cdot (\triangledown U(z)-\triangledown U(\underline{0})) + z \cdot  \triangledown U(\underline{0}) + z\cdot b(z,\alpha)  \leq C\left(\|z\|^2 +1 \right),
\]
due to the convexity of $U$ and the linear growth of $b$. A similar bound is obtained for the second summand in the 
l.h.s of \eqref{cond_non_lipschitz_2}, which has uniform quadratic growth in the $z$ variable. Then, for every integrable initial condition, there exists a unique strong solution to \eqref{nojumps_2}. Let $\pi_1$ be the projection defined as 
 \begin{equation*}
 \begin{array}{rccc}
 \pi_1:&[0,1]^{\mathbb{N}}\times [0,\infty)&\mapsto & [0,1]\\
 &(h,u)&\rightarrow& h_1,
 \end{array}
 \end{equation*}
 we define 
 \begin{equation}\label{betweenjumps_2}
 X_1(t)=\left\{
 \begin{array}{ll}
 Z^1(t)& t\,\in\,[0,\sigma_1),\\
 Z^1(\sigma_1^-)+\psi(Z^1(\sigma_1^-),\alpha(\sigma_1^-),\pi_1\circ p(\sigma_1))& t=\sigma_1,
 \end{array}
 \right.
 \end{equation}
 where $\{Z^1(t)\}_{t\geq0}$ is solution of \eqref{nojumps_2} with initial condition $Z^1(0)=\xi$ a.s.. We see that $X^1(t)$ is solution of \eqref{SDE_parametrizzata_2}  for $t$ $\in$ $[0,\sigma_1]$. We iterate the procedure by setting $\bar \xi\doteq X_1(\sigma_1)$, $\bar B\doteq(B(t+\sigma_1)-B(\sigma_1))_{t\geq0}$ and $\bar p\doteq(p(t+\sigma_1))_{t\geq0}$. We define $\bar X_1(t)$ for $t$ $\in$ $[0,\bar\sigma_1]$ as we did for $X_1(t)$ in \eqref{betweenjumps_2}, where $\bar \sigma_1$ is the smallest time such that $\bar p_s$ belongs to $\bar U_{\sigma_1+s}$ and coincides with $\sigma_2-\sigma_1$. We define 
  \begin{equation*}
 X_2(t)=\left\{
 \begin{array}{ll}
 X_1(t)& t\,\in\,[0,\sigma_1],\\
 \bar X_1(t-\sigma_1)&  t\,\in\,[\sigma_1,\sigma_2].
 \end{array}
 \right.
 \end{equation*}
 Clearly $X_2$ is solution of \eqref{SDE_parametrizzata_2}  for $t$ $\in$ $[0,\sigma_2]$. Since $\lim_{n\rightarrow\infty}\sigma_n=\infty$ a.s., we can iterate this procedure to cover the entire time interval $[0,T]$.

To prove that the law $\mu$ of a solution of \eqref{SDE_parametrizzata_2} belongs to $\mathcal{M}^1(\mathbf{D}([0,T],\mathbb{R}^d))$, we will show that there exists a filtered probability space $(\Omega,\Prb,(\mathcal{F}_t),\mathcal{F})$, with a $\mathcal{F}_t$-Brownian motion $B$, an adapted $\mathcal{F}_t$ Poisson random measure $\mathcal{N}$ with characteristic measure $l\times l\times \nu$ and a $\mathcal{F}_0$-measurable initial condition $X(0)$ $\mu_0$-distributed such that $\Mean\left[\sup_{t\in[0,T]}\|X(t)\|\right]<\infty$. 
We consider the process $X(t)$, for all $t\geq0$, solution of \eqref{SDE_parametrizzata_2}. Now, we use the trick of applying Ito's rule to the smooth approximation $f^{\epsilon}$ of $\|\cdot\|$ and taking the limit for $\epsilon\downarrow0$, to exploit the properties of the potential function $U$. For the details of the approach, see the proof of Theorem~\ref{existence_uniqueness_McKeanVlasov_NONSTANDARD_DRIFT}.
 Then, for the properties of coefficients and quantities involved, there exist three positive constants $D_1$, $D_2$ and $D_3$ s.t.
 \[
\Mean\left[\sup_{t\in[0,T]}\| X(t)\|\right]\leq \Mean\left[\|X(0)\|\right]+D_1 T+D_2T\Mean\left[\sup_{t\in[0,T]} \|Y(t)\|\right]+D_1\int_0^T\Mean\left[\sup_{s\in[0,t]}\| X(s)\|\right]dt.
\] 
We apply Gronwall Lemma and we get the desired bound. 
 \end{proof}
\begin{lemma}\label{lemma_jumps}
Let $\{\sigma_n\}_{n\in\mathbb{N}^*}$ be a sequence of strictly increasing stopping times. If, for all $T>0$, there exists a constant $C_T\geq0$ such that
$$
\mathbf{P}(\sigma_n\leq n)\leq\frac{C_T}{n},$$
then $\lim_{n\rightarrow\infty}\sigma_n=\infty$ a.s..
\end{lemma}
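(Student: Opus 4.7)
The statement almost certainly has a small typo: the intended hypothesis is $\Prb(\sigma_n\leq T)\leq C_T/n$ for every $T>0$ and every $n$, which is exactly the bound derived in the preceding display of the proof of Lemma~\ref{lemma_SDE_parametrizzata_2}. I will work from this reading.

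The plan is to introduce $\sigma_\infty\doteq \lim_{n\to\infty}\sigma_n$, which exists in $[0,\infty]$ by the assumption that $(\sigma_n)$ is strictly increasing, and to show $\Prb(\sigma_\infty=\infty)=1$. The key monotonicity observation is that, for any fixed $T>0$, the events $\{\sigma_n\leq T\}$ are decreasing in $n$ (again because $\sigma_n$ is strictly increasing), so
\begin{equation*}
\{\sigma_\infty\leq T\} \;=\; \bigcap_{n\in\mathbb{N}^*}\{\sigma_n\leq T\}.
\end{equation*}
By continuity of probability from above and the hypothesis,
\begin{equation*}
\Prb(\sigma_\infty\leq T) \;=\; \lim_{n\to\infty}\Prb(\sigma_n\leq T) \;\leq\; \lim_{n\to\infty}\frac{C_T}{n} \;=\; 0.
\end{equation*}

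Finally I would cover $\{\sigma_\infty<\infty\}$ by a countable union of the null sets just produced. Writing
\begin{equation*}
\{\sigma_\infty<\infty\} \;=\; \bigcup_{T\in\mathbb{N}}\{\sigma_\infty\leq T\},
\end{equation*}
countable subadditivity gives $\Prb(\sigma_\infty<\infty)=0$, i.e.\ $\lim_{n\to\infty}\sigma_n=\infty$ almost surely. There is no real obstacle here: the only subtlety is the monotonicity argument that turns the pointwise-in-$n$ bound into a statement about the limit, together with the elementary reduction from ``finite'' to a countable union of ``bounded by an integer''.
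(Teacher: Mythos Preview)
Your proof is correct, and indeed the statement should read $\Prb(\sigma_n\leq T)\leq C_T/n$ for every $T>0$ and every $n$, as you note.

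Your argument is genuinely different from the paper's and in fact more streamlined. The paper proceeds via Borel--Cantelli: it passes to the subsequence $A_n\doteq\{\sigma_{n^2}\leq T\}$ so that $\sum_n\Prb(A_n)\leq\sum_n C_T/n^2<\infty$, concludes $\Prb(\limsup_n A_n)=0$, and then uses the monotonicity of $(\sigma_n)$ to upgrade from the subsequence $n^2$ back to all $n$; finally it intersects over $T\in\mathbb{N}$. You bypass the subsequence entirely by observing that the events $\{\sigma_n\leq T\}$ are already decreasing in $n$, so continuity from above gives $\Prb(\sigma_\infty\leq T)=\lim_n\Prb(\sigma_n\leq T)=0$ directly. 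Your route is shorter and uses only the bare monotone convergence of measures rather than Borel--Cantelli; the paper's route, by contrast, would still work under the weaker hypothesis $\Prb(\sigma_n\leq T)\leq C_T/n^\alpha$ for any $\alpha>0$ (via the subsequence $n^k$ with $k\alpha>1$), though that extra generality is not needed here.
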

\begin{proof} We start by proving that, for all $T>0$, there exists a measurable set $\Lambda_T$ with probability one, such that for all $\omega$ $\in$ $\Lambda_T$, there exists $n_0(\omega,T)$ and for all $n\geq n_0(\omega,T)$ it holds $\sigma_n(w)>T$. \\
Let $A_n\doteq\left\{\sigma_{n^2}\leq T\right\}$ and $\displaystyle{A\doteq \bigcap_{n=1}^{\infty}\bigcup_{i=n}^{\infty}A_i}$, therefore we have 
$$
\displaystyle{\sum_{n=1}^{\infty}\mathbf{P}(A_n)\leq \sum_{n=1}^{\infty}\frac{C_T}{n^2}<\infty}
$$
and for Borel Cantelli $\mathbf{P}(A)=0$. Let $\Lambda_T\doteq A^{C}$, then it has probability  one and for all $\omega\in\Lambda_T$ there exists $\bar n_0(\omega,T)$ such that for all $n\geq \bar n_0(\omega,T)$ we have $\sigma_{n^2}>T$. Since the $\sigma_n$ are increasing, we have the claim that there exists $n_0(\omega,T)$ such that for all $n\geq n_0(\omega, T)$, $\sigma_n>T$.  \\
Now, let $\displaystyle{\tilde{\Lambda}\doteq\bigcap_{T\in\mathbb{N}}}\Lambda_T$, then $\mathbf{P}(\tilde{\Lambda})=1$ and for all $\omega$ $\in$ $\tilde{\Lambda}$ for all $T\geq0$ there exists $n_0(\omega,T)$ s.t. for all $n\geq n_0(\omega,T)$ then $\sigma_n(\omega)>T$. This implies $\sigma_n\nearrow\infty$ a.s..
\end{proof}


\section{Appendix for Section~\ref{non_lip_rate}}\label{app_B}
We collect here auxiliary lemmas and proofs for Section~\ref{non_lip_rate}. First, Lemma~\ref{exists_N_particle} concerns existence and uniqueness of solutions for the particle system \eqref{SDE_XN_RATE_NONSTANDARD} under Assumption~\ref{ass}. Notice that the same result holds also for the intermediate particle system \eqref{SDE_YN_RATE_NONSTANDARD}. Then, thanks to two technical lemmas, we give the proof of Lemma~\ref{Lemma_Z}, crucial for the existence and uniqueness of solution of the nonlinear process \eqref{SDE_LIMITE_NONSTANDARD_RATE}. Finally, we give the proofs of Lemma~\ref{bound_per_<mu,f^5>} and Lemma~\ref{bound_jumps}, that we use in the propagation of chaos section. Notice that, the key ingredient here is represented by the fact that all the main jumps of the processes are such that they make the process go back inside a compact set (the support of $U$). To exploit that, we need to apply Ito's rule for a process with jumps (notice that here we do not have a diffusion term). Since all the functions of interest ($\|\cdot\|$ and $\lambda(\cdot)$) have singularities in the origin, we use a smooth approximation of the norm $\|\cdot\|$. As in the proof of Theorem~\ref{existence_uniqueness_McKeanVlasov_NONSTANDARD_DRIFT}, for all $\epsilon>0$, we define 
 $$f^{\epsilon}(x)\doteq \|x\|\mathds{1}(\|x\|>\epsilon)+\left(\frac{\|x\|^2}{2\epsilon}+\frac{\epsilon}{2}\right)\mathds{1}(\|x\|\leq \epsilon).$$ 

\begin{lemma}\label{exists_N_particle} Under Assumption~\ref{ass}, for every integrable initial condition $X^N(0)$ $\in$ $\mathbb{R}^{d\times N}$, the \mbox{SDE} \eqref{SDE_XN_RATE_NONSTANDARD} admits a unique solution. 
\end{lemma}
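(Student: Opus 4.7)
The plan is to combine a standard localization argument with an \textit{a priori} Lyapunov estimate that exploits the structural assumptions on the coefficients. First I would truncate the jump rate, setting $\lambda^{(K)}(x) := \lambda(x)\wedge K$ composed with a smooth spatial cutoff to keep it globally Lipschitz (or, equivalently, work with a cutoff that forces the state to stay in a ball of radius $K$). For each $K$, the resulting SDE has a bounded, globally Lipschitz jump rate and a linear drift, so classical results (Theorem~1.2 in \cite{Gra92}, or a jump-by-jump construction in the spirit of Lemma~\ref{lemma_SDE_parametrizzata_2} since $\lambda^{(K)}$ bounded implies the Poisson marks that actually trigger jumps form a locally finite set) yield existence and uniqueness of a strong solution $X^{N,K}$ on $[0,T]$.

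Next I would paste these truncated solutions together. Define $\tau_K := \inf\{t\geq 0 : \max_{i}\|X^{N,K}_i(t)\| \geq K\}$. Pathwise uniqueness of the truncated equations implies $X^{N,K} \equiv X^{N,K'}$ on $[0,\tau_K\wedge \tau_{K'}]$ for $K' \geq K$, because inside that ball the coefficients of the two truncated SDEs coincide with those of \eqref{SDE_XN_RATE_NONSTANDARD}. Consequently a unique solution $X^N$ is well defined on $[0,\tau_\infty)$, where $\tau_\infty := \sup_K \tau_K$, and the lemma reduces to showing $\tau_\infty = +\infty$ almost surely.

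The main obstacle is this non-explosion statement, and it is where Assumption~\ref{ass} plays its role. The drift $-X_i(t)dt$ is dissipative; each main jump resets $X_i$ to $U(h_i)$ which lies in a fixed bounded set; and each collateral jump contributes only $V(h_j,h_i)/N$ with $V$ bounded. The subtle point is that the jump rate $\lambda(X_i)$ is itself superlinear, so the compensator of the collateral terms involves $\tfrac{1}{N}\sum_j \lambda(X^{N,K}_j)$, which must be controlled simultaneously. I would use the smooth approximation $f^\epsilon$ of $\|\cdot\|$ introduced in Theorem~\ref{existence_uniqueness_McKeanVlasov_NONSTANDARD_DRIFT} and apply It\^o's formula to $\Phi(\mathbf{x}) := \sum_i f^\epsilon(x_i)$ along $X^{N,K}(\cdot\wedge\tau_K)$: the drift gives a non-positive contribution, the main-jump compensator is bounded because $f^\epsilon(U(h_i))\leq \|U\|_\infty + \epsilon/2$, and the collateral-jump compensator is estimated via a first-order expansion together with condition \eqref{condizione_su_b}, which ensures $\lambda$ grows in a controlled, affine-in-$\lambda$ way. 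A Gronwall argument, letting $\epsilon \downarrow 0$, produces a bound
\[
\mathbb{E}\Big[\max_{i\le N}\|X^{N,K}_i(t\wedge \tau_K)\|\Big] \leq C_{N,T}\big(1+\mathbb{E}[\max_i \|X^N_i(0)\|]\big)
\]
that is uniform in $K$. By Markov's inequality this forces $\mathbb{P}(\tau_K \leq T) \to 0$ as $K\to\infty$, and hence $\tau_\infty = +\infty$ a.s.; combined with the pasting step this gives the unique strong solution on $[0,T]$ for arbitrary $T>0$.
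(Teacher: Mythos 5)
Your proposal follows the same strategy as the paper: truncate the jump rate, build the solution on successive stochastic intervals via pathwise uniqueness, and rule out explosion by a Lyapunov-type estimate obtained from It\^o's formula applied to $f^\epsilon$ and the limit $\epsilon\downarrow 0$. This matches the paper's proof of Lemma~\ref{exists_N_particle} in both the pasting step and the non-explosion estimate.

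One small misattribution in your sketch of the a priori bound: you invoke condition~\eqref{condizione_su_b} (i.e.\ $b'\le\gamma b+c$) to control the collateral-jump compensator, but that condition plays no role here and the paper does not use it in this lemma. After summing over $i$, normalizing and compensating, the collateral-jump term contributes $\mathbf{E}[\|V\|]\,\mathbf{E}[\langle\mu_X^N(s),\lambda\rangle]$ and the main-jump term contributes $\mathbf{E}[\|U\|]\,\mathbf{E}[\langle\mu_X^N(s),\lambda\rangle]-\mathbf{E}[\langle\mu_X^N(s),\|\cdot\|\,\lambda(\cdot)\rangle]$; what closes the estimate is that the combined integrand $\lambda(x)\bigl(\mathbf{E}[\|V\|]+\mathbf{E}[\|U\|]-\|x\|\bigr)$ is bounded above, and for this one only needs $\lambda\ge 0$, $b$ increasing (hence locally bounded) and $h$ bounded. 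So it is not that $\lambda$ ``grows in a controlled affine-in-$\lambda$ way''; rather, the negative self-jump term $-\|X_i\|\lambda(X_i)$ (together with the dissipative drift, which you correctly note is non-positive) absorbs both the $V$- and $U$-contributions once $\|X_i\|$ exceeds $\mathbf{E}[\|V\|]+\mathbf{E}[\|U\|]$. Likewise, what you describe as ``Gronwall'' is really just a direct integral bound with a bounded integrand; condition~\eqref{condizione_su_b} only becomes relevant later, for the higher moment bounds on $\lambda(X)$ in Lemmas~\ref{bound_a_priori}, \ref{bound_per_<mu,f^5>} and \ref{bound_jumps}.

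Also, to deduce $\mathbf{P}(\tau_K\le T)\to 0$ from a bound on $\mathbf{E}\bigl[\max_i\|X_i^{N,K}(t\wedge\tau_K)\|\bigr]$ at fixed times $t$, you should note that $\|X^{N,K}(\tau_K)\|\ge K$ on $\{\tau_K\le T\}$ and apply Markov at the stopped time $\tau_K\wedge T$ (or, equivalently, derive an estimate for the running supremum); this is implicit in the paper and worth making explicit. None of these affect the validity of your overall argument, which coincides with the paper's.
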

\begin{proof}

The main issue is represented by the fact that the function $\lambda$ is unbounded and not globally Lipschitz continuous, indeed when $\lambda$ is bounded existence and uniqueness of solutions for \eqref{SDE_XN_RATE_NONSTANDARD} are consequences of standard results, see \cite{IkWa14}. Therefore, let us consider the truncate function $\lambda^K\doteq \lambda \wedge K$, for $K$ $\in $ $\mathbb{N}$, and the solution $X^{N,K}(t)$ of \eqref{SDE_XN_RATE_NONSTANDARD} with the function $\lambda^K$ instead of $\lambda$. This solution exists and it is unique for all $t$ $\in$ $[0,T]$, moreover, by pathwise uniqueness, it holds
$X^{N,K}(t)=X^{N,K+1}(t)$ for all $t$ $\in$ $\tau^K$, where $\tau^K\doteq\inf\left\{t\, / \|X^{N,K}(t)\|\geq K\right\}$. Therefore $\tau^{K}\leq\tau^{K+1}$ a.s. and there exists a pathwise unique solution $X(t)$ to \eqref{SDE_XN_RATE_NONSTANDARD}, defined for all $t$ $\in$ $[0,\tau)$, where $\tau\doteq\sup_{K\in\mathbb{N}}\tau^K$. We are left to prove that $\mathbf{P}(\tau>T)=1$.

Let us fix $i$~$\in$~$\{1,\dots,N\}$ and $\epsilon>0$. By computing $f^{\epsilon}(X^N_i(t))$ by means of Ito's formula, we get
\begin{multline*}
f^{\epsilon}(X^N_i(t))
%
\leq f^{\epsilon}(X^N_i(0))
+\frac{1}{N}\sum_{j=1}^N\int_0^t\int_{[0,1]^{\mathbb{N}}}\int_0^{\infty}f^{\epsilon}\left(V(h_j,h_i)\right)\mathds{1}_{(0,\lambda(X^N_j(s))]}(u)\mathcal{N}^j(ds,du,dh)\\
+\int_0^t\int_{[0,1]^{\mathbb{N}}}\int_0^{\infty}\left(f^{\epsilon}\left(U(h_i)\right)-f^{\epsilon}\left(X^N_i(s)\right)\right)\mathds{1}_{(0,\lambda(X^N_i(s))]}(u)\mathcal{N}^i(ds,du,dh).
\end{multline*}
Therefore, summing on all $i=1,\dots, N$ and taking expectation, by the application of Fatou's Lemma we get:

\begin{multline*}
\Mean\left[\frac{1}{N}\sum_{i=1}^N\|X^N_i(t)\|\right]\leq \liminf_{\epsilon\downarrow 0}\left(\Mean\left[\frac{1}{N}\sum_{i=1}^Nf^{\epsilon}(X^N_i(0))\right]\right.\\
\left.+\int_0^t\left(\Mean[f^{\epsilon}(V)]+\Mean[f^{\epsilon}(U)]\right)\Mean\left[\frac{1}{N}\sum_{i=1}^N\lambda(X^N_i(s))\right]-\Mean\left[\frac{1}{N}\sum_{i=1}^Nf^{\epsilon}(X^N_i(s))\lambda(X^N_i(s))\right]ds\right)\\
\end{multline*}
Then, by monotone convergence, we have 
\begin{multline*}
\Mean\left[\frac{1}{N}\sum_{i=1}^N\|X^N_i(t)\|\right]\leq\Mean\left[\frac{1}{N}\sum_{i=1}^N\|X^N_i(0)\|\right]\\
+\int_0^t\left(\Mean[\|V\|]+\Mean[\|U\|]\right)\Mean\left[\frac{1}{N}\sum_{i=1}^N\lambda(X^N_i(s))\right]
-\Mean\left[\frac{1}{N}\sum_{i=1}^N\|X^N_i(s)\|\lambda(X^N_i(s))\right]ds.
\end{multline*}
Since $b$ is increasing and $h$ is bounded, there exists a positive constant $C$, depending on \\
{\small$\Mean\left[\frac{1}{N}\sum_{i=1}^N\|X^N_i(0)\|\right]$}, such that
$$
\sup_{t\geq0}\Mean\left[\frac{1}{N}\sum_{i=1}^N\|X^N_i(t)\|\right]\leq C,$$
implying $\Prb(\tau>T)=1$.
\end{proof}
The proof of existence and uniqueness of solutions of \eqref{SDE_LIMITE_NONSTANDARD_RATE} for compact support initial condition relies on a straightforward adaptation of the arguments of \cite{RoTo14} to our framework, therefore we write the proof of Lemma~\ref{Lemma_Z} only for completeness.

\begin{proof}[Proof of Lemma \ref{Lemma_Z}.]
 We want to get an almost sure bound for $\|Z_f(t)\|$, in order to use locally Lipschitzianity of $\lambda$ in the following computations. Intuitively, the jumps have  an increasing role only if we are inside the support of the random variable $U$, otherwise they force the norm to decrease. Therefore, a.s., we can bound the process $\|Z_f(t)\|$ with the deterministic expression 
\[K_0+\int_0^t\|f(s)\|ds,
\]
where $K_0\colon= \max\{\|x\|,\sup_{h\in[0,1]}\|U(h)\|\}$. This almost sure bound for $\|Z_f(t)\|$ and the continuity of the coefficients ensure the existence and uniqueness of a non-explosive solution $Z_f$ on $[0,T]$. Let $Z_f$ and $Z_g$ two solutions of \eqref{SDE_con_funct} corresponding to two different locally bounded Borellian functions $f$ and $g$. The almost sure bounds on $\|Z_f(t)\|$ and $\|Z_g(t)\|$ let us define two positive constant $b_{f,g}(T)$ and $L_{f,g}(T)$, such that we have {\small
\begin{multline*}
\displaystyle{\Mean\left[\sup_{t\in[0,T]}\left\|Z_f(t)-Z_g(t)\right\|\right]\leq\int_0^T\Mean\left[\sup_{s\in[0,t]}\left\|Z_f(s)-Z_g(s)\right\|\right]ds+\int_0^T\sup_{s\in[0,t]}\|f(s)-g(s)\|dt}\\
\displaystyle{+\Mean\left[\int_0^T\int_{[0,1]\times[0,\infty)}\left\|(Z_f(s^-)-U(h))\mathds{1}_{[0,\lambda(Z_f(s^-)))}(u) -(Z_g(s^-)-U(h))\mathds{1}_{[0,\lambda(Z_g(s^-)))}(u) \right\|dsdu\nu_1(dh)\right]}\\
\displaystyle{\qquad\qquad\leq \int_0^T\Mean\left[\sup_{s\in[0,t]}\left\|Z_f(s)-Z_g(s)\right\|\right]ds+\int_0^T\sup_{s\in[0,t]}\|f(s)-g(s)\|dt}\\
\displaystyle{+\left( b_{f,g}(T)+H\right)\int_0^T\Mean\left[\sup_{s\in[0,t]}\left\|Z_f(s)-Z_g(s)\right\|\right]ds}\\
\displaystyle{+L_{f,g}(T)\left(\sup_{t\in [0,T]}\|Z_f(t)\|\right)\int_0^T\Mean\left[\sup_{s\in[0,t]}\left\|Z_f(s)-Z_g(s)\right\|\right]ds.}
\end{multline*}}
We apply now Gronwall lemma and we obtain \eqref{bound_Z_u}.
\end{proof}

The proof of Lemma~\ref{bound_a_priori} requires two technical lemmas adapted to our case from \cite{RoTo14}.

\begin{lemma}\label{limitatezza}
Let $x(t)$ be a non-negative $C^1$ function on $\mathbb{R}_+$. If the following inequality holds for any $0\leq s\leq t$:
$$
x(t) \leq x(s)-\bar{K}\int_s^tx^k(u)du+\int_s^tP_{\delta}\left(x(u)\right)du
$$
where $k,\bar K>0$ and $P_{\delta}(\cdot)$ is a polynomial of degree $\delta<k$, then 
$$
\sup_{t\geq 0}x(t)\leq C_0<\infty.$$
\end{lemma}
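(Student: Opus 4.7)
The plan is to convert the integral inequality into a differential inequality and then run a classical barrier/continuity argument.

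First I would set $\phi(y) \doteq P_\delta(y) - \bar K y^k$ for $y \geq 0$, so that the hypothesis reads
\begin{equation*}
    x(t) - x(s) \leq \int_s^t \phi(x(u))\,du, \qquad 0 \leq s \leq t.
\end{equation*}
Since $x \in C^1(\mathbb{R}_+)$ and $\phi \circ x$ is continuous, dividing by $t-s$ and letting $t \downarrow s$ yields the pointwise differential inequality $x'(s) \leq \phi(x(s))$ for every $s \geq 0$.

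Next, because $\deg P_\delta = \delta < k$ and $\bar K > 0$, the leading $-\bar K y^k$ term dominates, so $\phi(y) \to -\infty$ as $y \to +\infty$. Hence there exists $M_0 > 0$ (depending only on $\bar K$, $k$ and the coefficients of $P_\delta$) such that $\phi(y) < 0$ for every $y \geq M_0$. Set
\begin{equation*}
    C_0 \doteq \max\bigl(x(0),\,M_0\bigr).
\end{equation*}
I would then claim that $x(t) \leq C_0$ for all $t \geq 0$, which is exactly the desired conclusion.

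To establish the claim, I would argue by contradiction. Suppose the level set $A \doteq \{t \geq 0 : x(t) > C_0\}$ is non-empty and let $t_1 \doteq \inf A$. Since $x(0) \leq C_0$ and $x$ is continuous, $t_1 > 0$ is impossible to rule out only by continuity alone — but continuity does force $x(t_1) = C_0$ (otherwise $x(t_1) < C_0$ and the inequality $x > C_0$ would fail in a neighbourhood of $t_1$, contradicting the infimum). Since $C_0 \geq M_0$, the differential inequality at $t_1$ gives $x'(t_1) \leq \phi(C_0) < 0$. Consequently $x$ is strictly decreasing in a right neighbourhood of $t_1$, so $x(t) < C_0$ for all $t > t_1$ sufficiently close to $t_1$. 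This contradicts the defining property of $t_1$ as an infimum of $A$, since by that property there must exist a sequence $t_n \downarrow t_1$ with $x(t_n) > C_0$. Hence $A = \emptyset$ and $\sup_{t \geq 0} x(t) \leq C_0 < \infty$.

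The argument is entirely elementary; the only point requiring a little care is the passage from the integral inequality to the pointwise derivative bound, which is justified by $x \in C^1$ and the continuity of $\phi$. Once this is in place, the strict negativity of $\phi$ beyond $M_0$ provides the invariant interval $[0, C_0]$ that traps $x$ for all times.
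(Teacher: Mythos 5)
Your proposal is correct and follows essentially the same route as the paper's proof: pass to the pointwise differential inequality $x' \leq \phi(x)$ with $\phi(y) = P_\delta(y) - \bar{K}y^k$, observe that the degree gap $\delta < k$ forces $\phi(y) < 0$ beyond some threshold $M_0$, and conclude via a barrier/first-crossing argument that $x$ cannot exceed $C_0 = \max(x(0), M_0)$. The paper states this very tersely (it simply asserts the derivative becomes strictly negative once the trajectory exceeds a level $\bar{C}_0$, "forcing it toward zero"); your write-up just fills in the two steps the paper leaves implicit, namely the passage from the integral to the differential inequality using $x\in C^1$, and the continuity/infimum contradiction at the first crossing time $t_1$.
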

\begin{proof}
Consider that for $x\rightarrow\infty$, then 
$$
-\bar{K}x^k+P_{\delta}(x)\rightarrow- \infty.$$
Therefore it exists a value $\bar{C}_0$ such that, as soon as the trajectory exceeds $\bar{C}_0\geq0$ its derivative becomes strictly negative and the trajectory is forced toward zero. Thus, defining
$$C_0:=\max\{\bar{C}_0,x(0)\},$$
we get the desired bound.
\end{proof}
\begin{lemma}\label{lemma_b^p}
If the function $b$ satisfies the assumption~\eqref{condizione_su_b},  then  for any $\epsilon>0$ and $p$ $\in$ $[1,4+2\epsilon]$, there exists a constant $\gamma_1<(4+2\epsilon)\gamma$, $c_1>0$ and a value $\eta>0$, such that, for all $a\in \mathbb{R}^d$ with $\|a\|\leq \eta$ and for all $x\in\mathbb{R}^d$, it holds
\begin{equation*}
\left|b^p(\|x+a\|)-b^p(\|a\|)\right|\leq \|a\|\left(\gamma_1 b^p(\|x\|)+c_1\right). 
\end{equation*}
\end{lemma}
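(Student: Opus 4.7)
The plan is to combine the mean value theorem (applied to the scalar $C^{1}$ function $r \mapsto b^{p}(r)$) with a Gronwall-type bound derived from the hypothesis $b'(r) \leq \gamma b(r) + c$, so as to express everything in terms of $b(\|x\|)$ up to small errors governed by $\eta$.

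First I would observe that, by the reverse triangle inequality, $|\|x+a\|-\|x\|| \leq \|a\|$. Since $b \in C^{1}$, the function $r\mapsto b^{p}(r)$ is $C^{1}$ as well, and the mean value theorem yields a point $r^{\ast}$ between $\|x\|$ and $\|x+a\|$ (hence $|r^{\ast}-\|x\|| \leq \|a\|$) such that
\begin{equation*}
	\bigl|b^{p}(\|x+a\|) - b^{p}(\|x\|)\bigr|
	\;\leq\; p\, b^{p-1}(r^{\ast})\, b'(r^{\ast})\, \|a\|.
\end{equation*}
Using $b'(r^{\ast}) \leq \gamma b(r^{\ast}) + c$ gives the pointwise bound
$p b^{p-1}(r^{\ast})b'(r^{\ast}) \leq p\gamma\, b^{p}(r^{\ast}) + pc\, b^{p-1}(r^{\ast})$,
so the task reduces to comparing $b(r^{\ast})$ with $b(\|x\|)$.

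For the comparison, I would integrate the differential inequality: setting $y(r) \doteq b(r) + c/\gamma$, the hypothesis gives $y'(r) \leq \gamma y(r)$, and Gronwall implies $y(r) \leq y(\|x\|) e^{\gamma(r-\|x\|)}$ for $r \geq \|x\|$. Consequently, for $r^{\ast} \leq \|x\|+\|a\| \leq \|x\|+\eta$,
\begin{equation*}
	b(r^{\ast}) \;\leq\; A(\eta)\, b(\|x\|) + B(\eta), \qquad A(\eta) \doteq e^{\gamma\eta}, \quad B(\eta) \doteq \tfrac{c}{\gamma}(e^{\gamma\eta}-1),
\end{equation*}
with $A(\eta)\to 1$ and $B(\eta)\to 0$ as $\eta \downarrow 0$ (and the case $r^{\ast} \leq \|x\|$ is handled by monotonicity of $b$). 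I would then use the elementary inequality $(u+v)^{q}\leq (1+\mu)u^{q} + C(\mu,q)v^{q}$ (valid for all $u,v\geq 0$, $q\geq 1$, $\mu>0$) to obtain $b^{p}(r^{\ast}) \leq (1+\mu) A(\eta)^{p} b^{p}(\|x\|) + C_{\mu}B(\eta)^{p}$, and analogously for $b^{p-1}(r^{\ast})$. Finally, Young's inequality provides $b^{p-1}(\|x\|) \leq \delta\, b^{p}(\|x\|) + C(\delta,p)$ for arbitrary $\delta>0$, so that every term on the right-hand side is either a constant or a multiple of $b^{p}(\|x\|)$. The overall coefficient multiplying $b^{p}(\|x\|)$ takes the form
\begin{equation*}
	p\gamma\, (1+\mu)\, A(\eta)^{p} \;+\; pc\,\delta\,(1+\mu)\,A(\eta)^{p-1},
\end{equation*}
while the remaining terms, proportional to $B(\eta)^{p}$, $B(\eta)^{p-1}$ and $C(\delta,p)$, are bounded constants that I would absorb into $c_{1}$.

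The main obstacle is to keep this coefficient strictly below $(4+2\varepsilon)\gamma$ for every $p\in[1,4+2\varepsilon]$. Since $A(\eta)^{p}\to 1$ as $\eta\downarrow 0$ and $\mu,\delta$ are free parameters, the coefficient tends to $p\gamma \leq (4+2\varepsilon)\gamma$; the slack between $p\gamma$ and $(4+2\varepsilon)\gamma$ afforded by the strict inequality $p<4+2\varepsilon$ (or equivalently by fixing $p$ and choosing a slightly larger $\varepsilon$) is precisely what allows the factors $(1+\mu)A(\eta)^{p}>1$ and the Young correction $pc\delta A(\eta)^{p-1}$ to be absorbed, yielding a $\gamma_{1}<(4+2\varepsilon)\gamma$ as required.
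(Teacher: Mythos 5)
The paper does not actually prove this lemma; its ``proof'' is a one-line reference to Lemma~8 in the appendix of \cite{RoTo14}, so your argument is a genuine reconstruction rather than a paraphrase. The strategy — mean value theorem for $r\mapsto b^{p}(r)$, the pointwise bound $b'\leq\gamma b+c$, a Gronwall-type comparison $b(r^{*})\leq e^{\gamma\eta}b(\|x\|)+\tfrac{c}{\gamma}(e^{\gamma\eta}-1)$ to transfer the estimate from $r^{*}$ back to $\|x\|$, and $\varepsilon$-Young expansions to isolate the $b^{p}(\|x\|)$ contribution — is correct and is the natural route for a statement of this form.

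Two points deserve to be made explicit rather than left between the lines.

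First, you silently bound $|b^{p}(\|x+a\|)-b^{p}(\|x\|)|$ rather than the stated $|b^{p}(\|x+a\|)-b^{p}(\|a\|)|$. This is surely the intended reading (with $\|a\|\leq\eta$ small, $b^{p}(\|a\|)$ is essentially a constant and the displayed inequality would fail as $\|x\|\to\infty$ for any fixed $\|a\|>0$), and it matches how the lemma is invoked in Lemma~\ref{bound_per_<mu,f^5>}; but the correction should be stated, not left implicit.

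Second, and more substantively: as you yourself note, your coefficient $p\gamma(1+\mu)A(\eta)^{p}+pc\,\delta(1+\mu)A(\eta)^{p-1}$ tends to $p\gamma$ \emph{from above} as $\eta,\mu,\delta\downarrow 0$, so you obtain $\gamma_{1}<(4+2\epsilon)\gamma$ only under the strict inequality $p<4+2\epsilon$, and your parenthetical escape hatch (``choose a slightly larger $\epsilon$'') yields $\gamma_{1}<(4+2\epsilon')\gamma$ for some $\epsilon'>\epsilon$, not the stated bound. In fact the endpoint $p=4+2\epsilon$ is genuinely false as printed: taking $b(r)=e^{\gamma r}$, which is admissible under \eqref{condizione_su_b}, and $a$ collinear with $x$, one has $b^{p}(\|x\|+\|a\|)-b^{p}(\|x\|)=b^{p}(\|x\|)\bigl(e^{p\gamma\|a\|}-1\bigr)$, so letting $\|x\|\to\infty$ forces $\gamma_{1}\geq(e^{p\gamma\|a\|}-1)/\|a\|>p\gamma=(4+2\epsilon)\gamma$, contradicting the claim. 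The authors evidently intend $p<4+2\epsilon$, which is all the single application needs ($p=4$, $\epsilon=1/2$, consistent with Assumption~\ref{ass}). Your argument covers exactly that regime; state the restriction $p<4+2\epsilon$ up front instead of tucking it into a parenthetical, and the proof is complete.
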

\begin{proof}
The proof of this lemma comes directly from Lemma~8 in the appendix of \cite{RoTo14}.
\end{proof}
Notice that the constant $\gamma_1<(4+2\epsilon)\gamma$, together with the condition of Lemma~\ref{limitatezza} on the negativity of the coefficient $\bar{K}$, cause the condition on $\gamma$ w.r.t $\Mean[\|V\|]$ in Assumption~\ref{ass}. This condition plays a crucial role in all the proofs of the boundedness for the moments of $\lambda(X(t))$ and of $\lambda(X^N_i(t))$ for all $i$. 
Now that we have stated these two results, we are ready to prove Lemma~\ref{bound_a_priori}, that provides a priori uniform bounds on the first moment of the solution to \eqref{SDE_LIMITE_NONSTANDARD_RATE} and on the moments of $\lambda(X(t))$.

\begin{proof}[Proof of Lemma \ref{bound_a_priori}]
{ Fix $\epsilon>0$, by means of Ito's rule, we have
\begin{multline*}
\Mean\left[f^{\epsilon}(X(t))\right]
\leq \Mean\left[f^{\epsilon}(X(0))\right]-\int_0^t\Mean\left[\|X(s)\|\mathds{1}(\|X(s)\|>\epsilon)\right]ds\\
-\int_0^t\Mean\left[\epsilon\mathds{1}(\|X(s)\|\leq\epsilon)\right]ds+\int_0^t\Mean\left[(\Mean\left[\|V\|\right]+\Mean\left[\|U\|\right]-f^{\epsilon}(X(s)))h(X(s))\right]ds\\
+\int_0^t\Mean\left[b(\|X(s)\|)\left(\Mean\left[\|V\|\right]+\Mean\left[\|U\|\right]-f^{\epsilon}(X(s))\right)\right]ds.
\end{multline*}
For the monoticity assumption on $b$, we know that there exist $\Lambda>0$ and $\beta\geq0$ such that $b(r)\left(\Mean\left[\|V\|\right]+\Mean\left[\|U\|\right]-r\right)\leq-\Lambda r+\beta$. Therefore, by 
Fatou's lemma and monotone convergence theorem, 
\begin{equation*}
\Mean\left[\|X(t)\|\right]\leq \Mean\left[\|X(0)\|\right]+\int_0^t \left[H\left(\Mean\left[\|V\|\right]+\Mean\left[\|U\|\right]\right)+\beta\right] ds
-\Lambda\int_0^t\Mean\left[\|X(s)\|)\right]ds,
\end{equation*}
that gives the boundedness of $\sup_{t\geq0}\Mean[\|X(t)\|]$. \\
Let $p=1$, clearly, to get a bound for $\Mean[\lambda(X(t))]$, it is sufficient to bound $\Mean\left[b(\|X(t)\|)\right]$. Thus, again, we use Ito's rule to compute $b(f^{\epsilon}(X(t)))$ for $\epsilon>0$.
{\small
\begin{multline*}
\Mean\left[b(f^{\epsilon}(X(t)))\right]\leq\Mean\left[b(f^{\epsilon}(X(0)))\right]\\
\displaystyle{-\int_0^t\Mean\left[b'(f^{\epsilon}(X(s)))\|X(s)\|\mathds{1}(\|X(s)\|>\epsilon)\right]ds-\int_0^t\Mean\left[b'(f^{\epsilon}(X(s)))\frac{\|X(s)\|^2}{\epsilon}\mathds{1}(\|X(s)\|\leq\epsilon)\right]ds}\\
\displaystyle{+\int_0^t\Mean\left[b'(f^{\epsilon}(X(s)))\Mean\left[b(\|X(s)\|)\right]\frac{X(s)\cdot \Mean[V]}{\|X(s)\|}\mathds{1}(\|X(s)\|>\epsilon)\right]ds}\\
\displaystyle{+H\int_0^t\Mean\left[b'(f^{\epsilon}(X(s)))\frac{X(s)\cdot \Mean[V]}{\|X(s)\|}\mathds{1}(\|X(s)\|>\epsilon)\right]ds}\\
+\int_0^t\Mean\left[b'(f^{\epsilon}(X(s)))\Mean\left[b(\|X(s)\|)\right]\frac{X(s)\cdot \Mean[V]}{\epsilon}\mathds{1}(\|X(s)\|\leq\epsilon)\right]ds\\
+H\int_0^t\Mean\left[b'(f^{\epsilon}(X(s)))\frac{X(s)\cdot \Mean[V]}{\epsilon}\mathds{1}(\|X(s)\|\leq\epsilon)\right]ds\\
+\int_0^t\Mean\left[b(\|X(s)\|)\right]\Mean\left[b(f^{\epsilon}(U))\right]ds+\int_0^t\Mean\left[h(X(s))\right]\Mean\left[b(f^{\epsilon}(U))\right]ds\\
-\int_0^t\Mean\left[b(f^{\epsilon}(X(s)))b(\|X(s)\|)\right]ds-\int_0^t\Mean\left[b(f^{\epsilon}(X(s)))\right]\Mean\left[h(X(s))\right]ds.
\end{multline*}}
Again we use Fatou's lemma and monotone convergence theorem (indeed $b(f^{\epsilon}(\cdot))$ converges monotonically to $b(\|\cdot\|)$, thanks to the increasing property of $b$). Since $b'$ is positive, we disregard the two terms in the second row, we use properties of $b'$ to bound the remaining terms and we get
{\small\begin{multline*}
\Mean\left[b(\|X(t)\|)\right]
\leq \Mean\left[b(\|X(0)\|)\right]+\left(Hc\Mean\left[\|V\|\right]+H\Mean\left[b(\|U\|)\right]\right)t+\left(\gamma\Mean\left[\|V\|\right]-1\right)\int_0^t\Mean\left[b(\|X(s)\|)\right]^2ds\\
\left(c\Mean\left[\|V\|\right]+H\gamma\Mean\left[\|V\|\right]+\Mean\left[b(\|U\|)\right]+H\right)\int_0^t\Mean\left[b(\|X(s)\|)\right]ds.
\end{multline*}}
With Lemma~\ref{limitatezza} we conclude the boundedness for $\Mean\left[b(\|X(t)\|)\right]$. The same argument is used to get a uniform bound for $\Mean\left[b^p(\|X(t)\|)\right]$ when $p=2,3,4$.}
\end{proof}
While the uniform bounds for $\Mean\left[\|X(t)\|\right]$ and $\Mean\left[b(\|X(t)\|)\right]$ are needed for the well-posedness of the nonlinear process itself, higher moments of $\lambda$ are needed only for the proof of propagation of chaos. The same a priori bounds for the moments of $\lambda$ appear also in the case of the particle system. Their proof is similar to the nonlinear case, relies on Lemma~\ref{limitatezza} and Lemma~\ref{lemma_b^p}, together with an argument based on orthogonal martingales.

\begin{proof}[Proof of Lemma \ref{bound_per_<mu,f^5>}.]
We only prove it for $\mu^N_X$, then for $\mu^N_Y$ the steps are basically the same. Of course it is sufficient to prove the boundedness of $\sup_{N\geq N_0}\sup_{t\geq0}\Mean[\langle\mu^N_X(t),b^4(f^{\delta}(\cdot))\rangle]$. Let us define for $K>0$ the stopping time $\tau_K:=\inf\left\{t\geq 0:\langle \mu^N_X(t),b^5(f^{\delta}(\cdot))\rangle \geq K \right\}$. Obviously the random variables  $\langle \mu^N_X(t \wedge \tau_K), b^p(f^{\delta}(\cdot))\rangle $  for $1\leq p\leq 5$ and  $\langle \mu^N_X(t \wedge \tau_K), f^{\delta}(\cdot)\rangle $  are integrable. Recall that, for all $\epsilon>0$, the process $\{M^N_{\epsilon}(t)\}_{t\geq[0,T]}$, where, for $t\in[0,T]$ we have 
\begin{multline*}
M^N_{\epsilon}(t)\doteq \langle \mu^N_X(t),f^{\epsilon}(\cdot)\rangle-\langle \mu^N_X(0),f^{\epsilon}(\cdot)\rangle\\
+\frac{1}{N}\sum_{i=1}^N\left(\int_0^t\frac{X^N_i(s)\cdot X^N_i(s)}{\|X^N_i(s)\|}\mathds{1}(\|X^N_i(s)\|>\epsilon)ds+\int_0^t\frac{X^N_i(s)\cdot X^N_i(s)}{\epsilon}\mathds{1}(\|X^N_i(s)\|\leq\epsilon)ds\right)\\
-\frac{1}{N}\sum_{i=1}^N\sum_{j\neq i} \int_0^t\int_{[0,1]^{\mathbb{N}}}\lambda(X^N_j(s))\left(f^{\epsilon}\left(X^N_i(s)+\frac{V(h_i,h_j)}{N}\right)-f^{\epsilon}\left(X^N_i(s)\right)\right)\nu(dh)ds\\
-\frac{1}{N}\sum_{i=1}^N\int_{0}^t\int_{[0,1]}^{\mathbb{N}}\lambda(X^N_i(s))\left(f^{\epsilon}\left(U(h_i)\right)-f^{\epsilon}\left(X^N_i(s)\right)\right)\nu(dh)ds,
\end{multline*}
is a martingale. Then, for the optional stopping theorem, it holds
\begin{multline*}
\Mean \left[\langle\mu^N_X(t\wedge\tau_K),f^{\epsilon}(\cdot)\rangle\right]\leq \Mean \left[\mu^N_X(0),f^{\epsilon}(\cdot)\rangle\right]
-\Mean \left[\int_0^{t\wedge\tau_K}\langle\mu^N_X(s),\|\cdot\|\mathds{1}(\|\cdot\|>\epsilon)\rangle ds\right]\\
-\Mean \left[\int_0^{t\wedge\tau_K}\langle\mu^N_X(s),\frac{\|\cdot\|^2}{\epsilon}\mathds{1}(\|\cdot\|\leq\epsilon)\rangle ds\right]\\
+N\Mean \left[\int_0^{t\wedge\tau_K}\langle\mu^N_X(s),\lambda(\cdot)\rangle\langle\mu^N_X(s),\int_{[0,1]^{2}}f^{\epsilon}\left(\cdot+\frac{V(h_1,h_2)}{N}\right)-f^{\epsilon}(\cdot)\nu_2(dh)\rangle ds\right]\\
-\Mean \left[\int_0^{t\wedge\tau_K}\langle\mu^N_X(s),\lambda(\cdot)\int_{[0,1]}f^{\epsilon}\left(\cdot+\frac{V(h_1,h_1)}{N}\right)-f^{\epsilon}(\cdot)\nu_1(dh)\rangle ds\right]\\
+\Mean \left[\int_0^{t\wedge\tau_K} \Mean[f^{\epsilon}(U)]\langle\mu^N_X(s),\lambda(\cdot)\rangle-\langle\mu^N_X(s),\lambda(\cdot)f^{\epsilon}(\cdot)\rangle ds\right].
\end{multline*}
Again, we use the monotone convergence of $f^{\epsilon}(x)$ to $\|x\|$, to get
{\small\begin{equation*}
\Mean \left[\mathds{1}(t\leq \tau_K)\langle\mu^N_X(t),\|\cdot\|\rangle\right]\leq \liminf_{\epsilon\downarrow 0}\Mean \left[\mathds{1}(t\leq \tau_K)\langle\mu^N_X(t),f^{\epsilon}(\cdot)\rangle\right]\leq\liminf_{\epsilon\downarrow 0}\Mean \left[\langle\mu^N_X(t\wedge\tau_K),f^{\epsilon}(\cdot)\rangle\right].
\end{equation*}}
By arguments close to the one in the proof of Lemma~\ref{bound_a_priori}, there exists $\Lambda>0$ and $\beta>0$, such that we get the following inequality
\begin{multline*}
\Mean \left[\mathds{1}(t\leq \tau_K)\langle\mu^N_X(t),\|\cdot\|\rangle\right]\leq \Mean \left[\mu^N_X(0),\|\cdot\|\rangle\right]\\
+\int_0^{t}\Mean\left[\mathds{1}(s\leq\tau_K)\langle\mu^N_X(s),\left(\Mean[\|V\|]+\frac{\Mean[\|V\|]}{N}+\Mean[\|U\|]-\|\cdot\|\right)\lambda(\cdot)\rangle\right]ds\\
\leq \Mean \left[\mu^N_X(0),\|\cdot\|\rangle\right]+\left[H\left(\Mean[\|V\|]+\frac{\Mean[\|V\|]}{N}+\Mean[\|U\|]\right)+\beta\right] t \\
-\Lambda\int_0^{t}\Mean\left[\mathds{1}(s\leq\tau_K)\langle\mu^N_X(s),\|\cdot\|\rangle\right]ds.
\end{multline*}
This, together with Lemma~\ref{limitatezza}, gives the boundedness of $\sup_{t\geq0}\Mean \left[\mathds{1}(t\leq\tau_K)\langle\mu^N_X(t),\|\cdot\|\rangle\right]$. Since this bound does not depend on $K$, we are allow to let $K$ go to infinity, and therefore obtain a bound on 
$\displaystyle{\sup_{t\geq0}\Mean \left[\langle\mu^N_X(t),\|\cdot\|\rangle\right]}$. Now we apply the same argument to the martingale $\{M^N_{b^4}(t)\}_{t\geq[0,T]}$. By deleting some of the negative terms, applying Lemma~\ref{lemma_b^p} and repeating the previous steps, we obtain the following bound
{\small
\begin{multline*}
\displaystyle{\Mean\left[\mathds{1}(\tau_K\leq t)\langle \mu^N_X(t),b^4(\|\cdot\|)\rangle\right]}
%
%
%
%
\leq   \Mean\left[ \langle\mu^N_X(0), b^4(\|\cdot\|)\rangle\right]\\
+\gamma_1\Mean\left[\|V\|\right]\int_0^t\Mean\left[\mathds{1}(s\leq\tau_K)\langle\mu^N_X(s), b^4(\|\cdot\|)\rangle\langle\mu^N_X(s),b(\|\cdot\|)\rangle\right]ds\\
+H\gamma_1\Mean\left[\|V\|\right]\int_0^t\Mean\left[\mathds{1}(s\leq\tau_K)\langle\mu^N_X(s), b^4\left(\|\cdot\|\right)\rangle\right]ds\\
+c_1\Mean\left[\|V\|\right]\int_0^t\Mean\left[\mathds{1}(s\leq\tau_K)\langle\mu^N_X(s),b(\|\cdot\|)\rangle\right]ds+c_1H\Mean\left[\|V\|\right]\int_0^t\Mean\left[\mathds{1}(s\leq\tau_K)\right]ds\\
+\gamma_1\frac{\Mean\left[\|V\|\right]}{N}\int_0^t\Mean\left[\mathds{1}(s\leq\tau_K)\langle\mu^N_X(s), b^5(\|\cdot\|)\rangle\right]ds+c_1\frac{\Mean\left[\|V\|\right]}{N}\int_0^t\Mean\left[\mathds{1}(s\leq\tau_K)\langle\mu^N_X(s), b(\|\cdot\|)\rangle\right]ds\\
+H\gamma_1\frac{\Mean\left[\|V\|\right]}{N}\int_0^t\Mean\left[\mathds{1}(s\leq\tau_K)\langle\mu^N_X(s), b^4(\|\cdot\|)\rangle\right]ds+c_1H\frac{\Mean\left[\|V\|\right]}{N}\int_0^t\Mean\left[\mathds{1}(s\leq\tau_K)\right]ds\\
+\Mean\left[b^4(\|U\|)\right]\int_0^t\Mean\left[\mathds{1}(s\leq\tau_K)\langle\mu^N_X(s), b(\|\cdot\|)\rangle\right]ds+\Mean\left[b^4(\|U\|)\right]Ht\\
-\int_0^t\Mean\left[\mathds{1}(s\leq\tau_K)\langle\mu^N_X(s), b^5(\|\cdot\|)\rangle\right]ds+H\int_0^t\Mean\left[\mathds{1}(s\leq\tau_K)\langle\mu^N_X(s), b^4(\|\cdot\|)\rangle\right]ds.
\end{multline*} }
By H\"{o}lder and Jensen inequalities, we get the following expression
{\small\begin{multline*}
\displaystyle{\Mean\left[\langle \mathds{1}(\tau_K\leq t)\mu^N_X(t),b^4(\|\cdot\|)\rangle\right]\leq}  \Mean\left[ \langle\mu^N_X(0), b^4(\|\cdot\|)\rangle\right]\\
+\gamma_1\Mean\left[\|V\|\right]\int_0^t\Mean\left[\mathds{1}(s\leq\tau_K)\langle\mu^N_X(s), b^4(\|\cdot\|)\rangle\right]^{5/4}ds+H\gamma_1\Mean\left[\|V\|\right]\int_0^t\Mean\left[\mathds{1}(s\leq\tau_K)\langle\mu^N_X(s), b^4\left(\|\cdot\|\right)\rangle\right]ds\\
+c_1\Mean\left[\|V\|\right]\int_0^t\Mean\left[\mathds{1}(s\leq\tau_K)\langle\mu^N_X(s), b^4(\|\cdot\|)\rangle\right]^{1/4}ds+c_1H\Mean\left[\|V\|\right]t\\
+c_1\frac{\Mean\left[\|V\|\right]}{N}\int_0^t\Mean\left[\mathds{1}(s\leq\tau_K)\langle\mu^N_X(s), b^4(\|\cdot\|)\rangle\right]^{1/4}ds+H\gamma_1\frac{\Mean\left[\|V\|\right]}{N}\int_0^t\Mean\left[\mathds{1}(s\leq\tau_K)\langle\mu^N_X(s), b^4(\|\cdot\|)\rangle\right]ds\\
+\left(c_1H\frac{\Mean\left[\|V\|\right]}{N}+\Mean\left[b^4(\|U\|)\right]\right)t
+\Mean\left[b^4(\|U\|)\right]\int_0^t\Mean\left[\mathds{1}(s\leq\tau_K)\langle\mu^N_X(s), b^4(\|\cdot\|)\rangle^{1/4}\right]ds\\
+H\int_0^t\Mean\left[\mathds{1}(s\leq\tau_K)\langle\mu^N_X(s), b^4(\|\cdot\|)\rangle\right]ds+\left(\gamma_1\frac{\Mean\left[\|V\|\right]}{N}-1\right)\int_0^t\Mean\left[\mathds{1}(s\leq\tau_K)\langle\mu^N_X(s), b^4(\|\cdot\|)\rangle\right]^{5/4}ds,
\end{multline*} }
where we have exploited the fact that $\left(\gamma_1\frac{\Mean\left[\|V\|\right]}{N}-1\right)<0$, for $N$ large enough, and that \\
$\langle\mu^N_X(s),b^5\rangle\geq\langle\mu^N_X(s),b^4\rangle^{5/4}$.  Reordering, we get
{\small\begin{multline*}
\displaystyle{\Mean\left[\mathds{1}(\tau_K\leq t)\langle \mu^N_X(t),b^4(\|\cdot\|)\rangle\right]\leq} \langle \Mean\left[ \mu^N_X(0), b^4(\|\cdot\|)\rangle\right] \\
+\left(c_1\Mean[\|V\|]+c_1\frac{\Mean[\|V\|]}{N}+\Mean\left[b^4(\|U\|)\right]\right)\int_0^t\Mean\left[\mathds{1}(s\leq\tau_K)\langle\mu^N_X(s), b^4(\|\cdot\|)\rangle\right]^{1/4}ds\\
+\left(H\gamma_1\Mean[\|V\|]+H\gamma_1\frac{\Mean[\|V\|]}{N}+H\right)\int_0^t\Mean\left[\mathds{1}(s\leq\tau_K)\langle\mu^N_X(s), b^4(\|\cdot\|)\rangle\right]ds\\
+\left(\gamma_1\Mean\left[\|V\|\right]+\gamma_1\frac{\Mean\left[\|V\|\right]}{N}-1\right)\int_0^t\Mean\left[\mathds{1}(s\leq\tau_K)\langle\mu^N_X(s), b^4(\|\cdot\|)\rangle\right]^{5/4}ds.
\end{multline*} }
Since, by hypothesis, there exists $N_0$ such that, for all $N\geq N_0$ it holds $$\left(\gamma_1\Mean\left[\|V\|\right]+\gamma_1\frac{\Mean\left[\|V\|\right]}{N}-1\right)<0,$$ we use Proposition~\ref{limitatezza} and this gives a bound on
$\displaystyle{\Mean\left[\mathds{1}(t\leq \tau_K)\langle \mu^N_X(t),b^4(\|\cdot\|)\rangle\right]}$ independent of $N$ and $K$; therefore letting $K$ go to infinity proves the thesis.
\end{proof}

As mentioned before, Lemma~\ref{bound_per_<mu,f^5>} plays a crucial role in the proof of Lemma~\ref{bound_jumps}, where we bound the number of jumps of a single particle for the particle system~\eqref{SDE_XN_RATE_NONSTANDARD} and the contribution of the collateral drift term for the particle system~\eqref{SDE_YN_RATE_NONSTANDARD}.

\begin{proof}[Proof. of Lemma \ref{bound_jumps}.]
We develop the computations for the proof just in the case of \eqref{SDE_XN_RATE_NONSTANDARD}, since for the system~\eqref{SDE_YN_RATE_NONSTANDARD} they are almost the same. Let us start by describing the quantity $C_N(T)$, that is
$$
\displaystyle{C_N(T)=\sum_{i=1}^N\int_0^T\int_{[0,1]^{\mathbb{N}}}\int_0^{\infty}\mathds{1}_{[0,\lambda(X^N_i(s))}(u)\mathcal{N}^i(ds,du,dh)}.$$
We can rewrite this quantity as the sum of orthogonal martingales, that we will indicate as $M^N(t)$, plus a term depending on the empirical measure, as follows:
\begin{multline*}
\frac{C_N(T)}{N}=\frac{1}{N}\sum_{i=1}^N\int_0^T\int_{[0,1]^{\mathbb{N}}}\int_0^{\infty}\mathds{1}_{[0,\lambda(X^N_i)(s)}(u)\tilde{\mathcal{N}}^i(ds,du,dh)+\int_0^T\langle \mu_X^N(s),\lambda(\cdot)\rangle ds\\
\doteq M^N(T)+\int_0^T\langle \mu_X^N(s),\lambda(\cdot)\rangle ds.
\end{multline*}
Let us consider  a positive constant $H_T>0$, then 
$$
\Prb\left(\frac{C_N(T)}{N}\geq H_T\right)\leq \Prb\left(M^N(T)\geq H_T\right)+\Prb\left(\int_0^T\langle\mu^N_X(s),\lambda \rangle ds\geq H_T\right).
$$
Of course, since $\{M^N(t)\}_{t\in[0,T]}$ is a martingale, we have $\Prb\left(M^N(T)\geq H_T\right)\leq \frac{\Mean[M^N(T)]}{H_T}=0$. Therefore, we want to get a bound for the probability $\Prb\left(\int_0^T\langle\mu^N_X(s),\lambda \rangle ds\geq H_T\right)$.  Let $\delta>0$ be fixed, the first step consists in proving that  there exists $C_T>0$ such that
\begin{equation*}
\Mean\left[\sup_{t\in[0,T]}M^N_{b,\delta}(t)^2\right]\leq \Mean\left[\langle M^N_{b,\delta}(T)\rangle\right]\leq\frac{C_T}{N},
\end{equation*}
where 
$\{M^N_{b,\delta}(t)\}_{t\in[0,T]}$ is the martingale arising from the compensated Poisson measure in the computation of $\langle \mu^N_X(t), b(f^{\delta}(\cdot))\rangle$ with Ito rule, that is 
\begin{multline*}
M^N_{b,\delta}(t)\doteq \frac{1}{N}\sum_{i=1}^N\int_0^t\int_{[0,1]^{\mathbb{N}}}\int_0^{\infty}\mathds{1}_{(0,\lambda(X^N_i(s))]}\left[b(f^{\delta}(U(h_i)))-b(f^{\delta}(X^N_i(s)))\right.\\
\left.+\sum_{j\neq i}b\left(f^{\delta}\left(X^N_j(s)+\frac{V(h_i,h_j)}{N}\right)\right)-b\left(f^{\delta}\left(X^N_j(s)\right)\right)\right]\tilde{\mathcal{N}}^i(ds,du,dh)
\end{multline*} and $\langle M^N_{b,\delta}(t)\rangle$ is its quadratic variation. We use the fact that  $\{\tilde{\mathcal{N}}^i\}_{i=1,2\dots}$ is a family of orthogonal martingales, therefore
\begin{multline*}\label{variaz_quadratica_M_b}
\langle M^N_{b,\delta}(t)\rangle= \frac{1}{N^2}\sum_{i=1}^N\int_0^t\int_{[0,1]^{\mathbb{N}}}\lambda(X^N_i(s))\left[b(f^{\delta}(U(h_i)))-b(f^{\delta}(X^N_i(s)))\right.\\
\left.+\sum_{j\neq i}b\left(f^{\delta}\left(X^N_j(s)+\frac{V(h_i,h_j)}{N}\right)\right)-b\left(f^{\delta}\left(X^N_j(s)\right)\right)\right]^2\nu(dh)ds.
\end{multline*}

Let us write $\langle M^N_{b,\delta}(t)\rangle\doteq \frac{1}{N^2}\sum_{i=1}^N M^N_{b,\delta,i}(t)$, we fix $i$ and we compute $M_{b,\delta,i}(t)$ as follows.
{\small 
\begin{multline*}
M_{b,\delta,i}(t)\leq2\int_0^t\int_{[0,1]^{\mathbb{N}}}b(f^{\delta}(X^N_i(s)))b^2(f^{\delta}(U))+Hb^2(f^{\delta}(U))+b^3(f^{\delta}(X^N_i(s)))+Hb^2(f^{\delta}(X^N_i(s)))\nu(dh)ds\\
+\int_0^t\int_{[0,1]^{\mathbb{N}}}b(f^{\delta}(X^N_i(s)))(N-1)\sum_{j\neq i}\left(\frac{f^{\delta}(V)}{N}(\gamma_1b(f^{\delta}(X^N_j(s)))+c_1)\right)^2\nu(dh)ds\\
+\int_0^t\int_{[0,1]^{\mathbb{N}}}H(N-1)\sum_{j\neq i}\left(\frac{f^{\delta}(V)}{N}(\gamma_1b(f^{\delta}(X^N_j(s)))+c_1)\right)^2\nu(dh)ds\\
+2\int_0^t\int_{[0,1]^{\mathbb{N}}}b(f^{\delta}(X^N_i(s)))(b(f^{\delta}(U))-b(f^{\delta}(X^N_i(s))))\sum_{j\neq i}\left(\frac{f^{\delta}(V)}{N}(\gamma_1b(f^{\delta}(X^N_j(s)))+c_1)\right)\nu(dh)ds\\
+2H\int_0^t\int_{[0,1]^{\mathbb{N}}}(b(f^{\delta}(U))-b(f^{\delta}(X^N_i(s))))\sum_{j\neq i}\left(\frac{f^{\delta}(V)}{N}(\gamma_1b(f^{\delta}(X^N_j(s)))+c_1)\right)\nu(dh)ds\\
\leq \left(2H\Mean[b^2(f^{\delta}(U))]+Hc_1^2\Mean[f^{\delta}(V)^2]\frac{N-1}{N}+2c_1\Mean[b(f^{\delta}(U))]\Mean[f^{\delta}(U)]H\right)t\\
+\left(2\Mean[b^2(f^{\delta}(U))]+c_1^2\Mean[f^{\delta}(V)^2]\frac{N-1}{N}+2c_1\Mean[b(f^{\delta}(U))]\Mean[f^{\delta}(V)]+2c_1\Mean[f^{\delta}(V)]H\right)\int_0^tb(f^{\delta}(X^N_i(s)))ds\\
+\left(2H+2c_1\Mean[f^{\delta}(V)]\right)\int_0^tb^2(f^{\delta}(X^N_i(s)))ds+\int_0^tb^3(f^{\delta}(X^N_i(s)))ds\\
+\left(2\gamma_1\Mean[b(f^{\delta}(U))]\Mean[f^{\delta}(V)]+2\gamma_1\Mean[f^{\delta}(V)]H\right)\int_0^tb(f^{\delta}(X^N_i(s)))\langle\mu^N_X(s),b(f^{\delta}(\cdot))\rangle ds\\
+\gamma_1^2\Mean[f^{\delta}(V)^2]\frac{N-1}{N}\int_0^tb(f^{\delta}(X^N_i(s)))\langle\mu^N_X(s),b^2(f^{\delta}(\cdot))\rangle ds\\
+H\gamma_1\Mean[f^{\delta}(V)^2]\frac{N-1}{N}\int_0^t\langle\mu^N_X(s),b^2(f^{\delta}(\cdot))\rangle ds+2\gamma_1\Mean[b(f^{\delta}(U))]\Mean[f^{\delta}(V)]\int_0^t\langle\mu^N_X(s),b(f^{\delta}(\cdot))\rangle ds\\
+2\gamma_1\Mean[f^{\delta}(V)]\int_0^tb^2(f^{\delta}(X^N_i(s)))\langle\mu^N_X(s),b(f^{\delta}(\cdot))\rangle ds
\end{multline*}}
Summing over all $i=1,\dots,N$ and dividing by $N^2$, we can find four positive constants $K_1,$ $K_2$, $K_3$ and $K_4$ such that  $\langle M^N_b(t)\rangle$ is bounded by the expression 
\begin{multline*}
\frac{K_1}{N}t+\frac{K_2}{N}\int_0^t\langle\mu^N_X(s),b^3(f^{\delta}(\cdot))\rangle^{1/3} ds+\frac{K_3}{N}\int_0^t\langle\mu^N_X(s),b^3(f^{\delta}(\cdot))\rangle^{2/3} ds+\frac{K_4}{N}\int_0^t\langle\mu^N_X(s),b^3(f^{\delta}(\cdot))\rangle ds
\end{multline*}
Using the result of Lemma~\ref{bound_per_<mu,f^5>}, we know that there exists a certain $N_0$, such that the expectation of all the terms involved is bounded uniformly in $N>N_0$. Therefore, for such $N$ we have
\begin{equation*}
\displaystyle{\Mean\left[\sup_{t\in[0,T]}M^N_{b,\delta}(t)\right]\leq \frac{C_T}{N}.}
\end{equation*} 
By Chebychev and Doob inequalities this leads to 
\begin{equation*}\label{bound_in_prob_per_M^N_b}
\displaystyle{\Prb\left(\sup_{t\in[0,T]}M^N_{b,\delta}(t)\geq 1\right)\leq \Mean\left[\sup_{t\in[0,T]}\left(M^N_{b,\delta}(t)\right)^2\right]\leq\Mean\left[\langle M^N_{b,\delta}(T)\rangle\right]\leq\frac{C_T}{N}}.
\end{equation*}
Now, we compute $\langle\mu^N_X(t),b(f^{\delta}(\cdot))\rangle$ with Ito's rule, that gives the following bound:
\small\begin{multline*}
\displaystyle{\langle\mu^N_X(t),b(f^{\delta}(\cdot))\rangle\leq \langle\mu^N_X(0),b(f^{\delta}(\cdot))\rangle+M^N_{b,\delta}(t)+\left(\Mean[f^{\delta}(V)]\gamma_1\left(1+\frac{1}{N}\right)-1\right)\int_0^t\langle\mu^N_X(t),b^2(f^{\delta}(\cdot))\rangle ds}\\
\displaystyle{+\left(H\Mean[f^{\delta}(V)]\gamma_1\left(1+\frac{1}{N}\right)+H+\Mean[f^{\delta}(V)]c_1\left(1+\frac{1}{N}\right)+\Mean[b^2(f^{\delta}(U))]\right)\int_0^t\langle\mu^N_X(t),b^2(f^{\delta}(\cdot))\rangle^{1/2} ds}\\
\displaystyle{+H\left(\Mean[f^{\delta}(V)]c_1+\Mean[b^2(f^{\delta}(U))]\right)t.}
\end{multline*}
Since, for hypothesis, $b(f^{\delta}(\cdot))$ is integrable with respect to the law of $X(0)$, for the law of large number, we know that 
\begin{equation*}
\Prb\left(\langle\mu^N_X(0),b(f^{\delta}(\cdot))\rangle \geq 1+\Mean[b(f^{\delta}(X(0)))]\right)\leq \frac{Var\left(b(f^{\delta}(X(0)))\right)}{N}.
\end{equation*}
Let us consider the event 
\begin{equation*}
\displaystyle{\left\{\langle\mu^N_X(0),b(f^{\delta}(\cdot))\rangle < 1+ \Mean[b(f^{\delta}(X(0)))]\right\}\cup\left\{\sup_{t\in[0,T]}M^N_{b,\delta}(t)<1\right\}},
\end{equation*}
that has a probability greater than $1-2\frac{C}{N}$. Under this event, we apply Lemma~\ref{limitatezza} to get a bound for $\langle\mu^N_X(T),b(f^{\delta}(\cdot))\rangle$.

Since, for all $\delta>0$, $\lambda(\cdot)\leq b(f^{\delta}(\cdot))+H$ a.s.,  this is equivalent to a bound for $\sup_{t\in[0,T]}\langle\mu^N_X(t),\lambda(\cdot)\rangle$,  that leads to the existence of a positive constant $K_T$ such that
$$
\Prb\left(\int_0^T\langle\mu^N_X(s),\lambda \rangle ds\geq H_T\right)\leq \frac{K_T}{N},$$
and therefore to  the desired bound for $\Prb\left(\frac{C_N(T)}{N}\geq H_T\right)$. 

\end{proof}



\end{document}